\newtheorem{theorem}{Theorem}
\newtheorem{lemma}{Lemma}
\newtheorem{prop}{Proposition}
\newtheorem{corollary}{Corollary}
\theoremstyle{definition}
\newtheorem{definition}[theorem]{Definition}
\theoremstyle{remark}
\newtheorem{remark}{Remark}
\numberwithin{equation}{section}
\title{Control of neural transport for normalising flows}
\author[D. Ruiz-Balet]{Dom\`enec Ruiz-Balet}
\address[D. Ruiz-Balet]{Imperial College London, Department of Mathematics, Exhibition Rd, South Kensington, London SW7 2BX, United Kingdom}
\curraddr{}
\email{d.ruiz-i-balet@imperial.ac.uk}
\thanks{}
\author[E. Zuazua]{Enrique Zuazua}
\address[E. Zuazua]{Friedrich-Alexander-Universit\"at Erlangen-N\"urnberg, Department of Mathematics, Chair for Dynamics, Control, Machine Learning and Numerics (Alexander von Humboldt Professorship), Cauerstr. 11, 91058 Erlangen, Germany. 
	\newline \indent 
	Chair of Computational Mathematics, Fundación Deusto,	Avenida de las Universidades, 24, 48007 Bilbao, Basque Country, Spain. 
	\newline \indent
	Universidad Autónoma de Madrid, Departamento de Matemáticas, Ciudad Universitaria de Cantoblanco, 28049 Madrid, Spain.}
\curraddr{}
\email{enrique.zuazua@fau.de}
\thanks{\textbf{Funding}: 
\textcolor{black}{D. Ruiz-Balet was funded by the UK Engineering and Physical Sciences Research Council (EPSRC) grant EP/T024429/1. E. Zuazua has been funded by the Alexander von Humboldt-Professorship program, the ModConFlex Marie Curie Action, HORIZON-MSCA-2021-DN-01, the COST Action MAT-DYN-NET, the Transregio 154 Project ``Mathematical Modelling, Simulation and Optimization Using the Example of Gas Networks'' of the DFG, grants PID2020-112617GB-C22 and TED2021-131390B-I00 of MINECO (Spain), and by the Madrid Goverment -- UAM Agreement for the Excellence of the University Research Staff in the context of the V PRICIT (Regional Programme of Research and Technological Innovation).}}
\begin{document}

\maketitle
\begin{abstract}
Inspired by normalising flows, we analyse the bilinear control of neural transport equations by means of time-dependent velocity fields restricted to fulfil, at any time instance, a simple neural network ansatz. The $L^1$ approximate controllability property is proved, showing that any    probability density can be driven arbitrarily close to any other one in any time horizon.
The control vector fields are built explicitly and inductively and this provides quantitative estimates on their complexity and amplitude. This also leads to statistical error bounds when only random samples of the target probability density are available.

\vspace{0.2cm}

\noindent \textsc{R{\'e}sum{\'e}}. Inspir{\'e}s par les flux normalisateurs, nous analysons le contr\^ole bilin{\'e}aire des {\'e}quations de transport neuronal au moyen de champs de vitesse d{\'e}pendant du temps et limit{\'e}s {\`a} v{\'e}rifier, à chaque instance temporelle, un simple ansatz de r{\'e}seau neuronal. La propri{\'e}t\'e de contr{\^o}labilit{\'e} approch{\'e}e $L^1$ est prouv{\'e}e, montrant que n'importe quelle densit\'e de probabilit\'e peut \^etre arbitrairement rapproch{\'e}e de n'importe quelle autre dans tout horizon temporel. Les champs de vecteurs de contr\^ole sont construits de mani\`ere explicite et inductive, ce qui permet d'obtenir des estimations quantitatives de leur complexit\'e et de leur amplitude. Cela conduit {\'e}galement \`a des limites d'erreur statistique lorsque seuls des {\'e}chantillons al{\'e}atoires de la densit\'e de probabilit\'e cible sont disponibles.

\vspace{0.2cm}

\noindent\textsc{Keywords}: normalising flows, Neural ODEs, Couplings, Approximate Control, Statistical error.

\vspace{0.2cm}

\noindent\textsc{MSC}: 35Q49, 68T01, 93B05
\end{abstract}

\section{Introduction and main results}
We prove the $L^1$  approximate control  property for the  neural transport or continuity equation:
\begin{equation}\label{NT}
\begin{cases}
\partial_t\rho+\mathrm{div}_x\big(w(t)\sigma(\langle a(t),x\rangle +b(t))\rho\big)=0 \quad (x,t)\in\mathbb{R}^d\times (0,T)\\
\rho(0)=\rho_0
\end{cases}
\end{equation}
where $\sigma(z)=\max(z,0)$ is the so-called ReLU activation function. 

Here and in the sequel $\langle \cdot, \cdot \rangle$ stands for the euclidean scalar product.

Motivated by normalising flows, this result constitutes an $L^1$-version of the earlier control result in  the Wasserstein distance  in \cite{ruiz2021neural}. The proof relies on a substantial further  development of the  methods presented in \cite{ruiz2021neural}, inspired on the simultaneous or ensemble control of Residual Neural Networks (ResNets) and the corresponding ODE counterparts, the so-called Neural ODEs (nODE),
\begin{equation}\label{nODE}
x(t)'=w(t)\sigma(\langle a(t),x(t)\rangle+b(t)).
\end{equation}

The term ``neural'' originates on the fact that the velocity field $V(x, t)$ generating the transport dynamics, which  plays the role of control, fulfils  at any time $t$, the simple neural network ansatz
\begin{equation}\label{ansatz}
V(x, t) = w(t)\sigma(\langle a(t), x \rangle+b(t))
\end{equation}
where $\sigma$, as indicated above, is the ReLU activation function. Note that by considering bounded measurable controls (with respect to time) and since the ReLU is globally Lipschitz, both the nODE and  the associated continuity equation \eqref{NT} are well-posed  \cite{diperna1989ordinary,ambrosio2008transport}.

The  projected characteristics of \eqref{NT} solve the  nODE \eqref{nODE} (see \cite{weinan2017proposal,haber2017stable}).
This nODE is the continuous  counterpart of ResNets in the deep layer regime.

Our main result asserts that the neural transport dynamics above can drive any initial probability density arbitrarily close to any other final one,  in any finite time-horizon, with an appropriate choice of the neural non-autonomous vector field $V$ as in \eqref{ansatz}.

\begin{theorem}[Approximate control of neural transport / neural $\epsilon$-coupling]\label{TH:aprox}
Given two probability densities $\rho_0,\rho_T\in L^1(\mathbb{R}^d)$, for any $T>0$ and for all $\epsilon>0$, there exist piecewise constant controls $w, a\in BV((0,T);\mathbb{R}^d)$ and $b\in BV((0,T);\mathbb{R})$ such that the solution of \eqref{NT} with
 $V(x,t)$ as in \eqref{ansatz}, satisfies
\begin{equation}
\|\rho(T)-\rho_T\|_{L^1(\mathbb{R}^d)}\leq \epsilon.
\end{equation}
\end{theorem}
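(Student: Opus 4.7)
The plan is to reduce Theorem \ref{TH:aprox} to transport between simple densities supported on small disjoint cubes, and then to construct the generating vector field inductively from elementary ReLU primitives.

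\textbf{Reduction to simple densities.} By density of compactly supported simple functions in $L^1(\mathbb{R}^d)$, I would approximate $\rho_0$ and $\rho_T$ within $\epsilon/3$ in $L^1$. After a common refinement of the partitions, one can arrange
$$
\tilde\rho_0=\frac{1}{Nh^d}\sum_{i=1}^N \mathbf{1}_{Q_i^0},\qquad \tilde\rho_T=\frac{1}{Nh^d}\sum_{i=1}^N \mathbf{1}_{Q_i^T},
$$
where both families consist of $N$ disjoint axis-aligned cubes of common side $h$. It then suffices to build piecewise-constant controls whose time-$T$ flow $\Phi_T$ sends each $Q_i^0$ approximately onto $Q_i^T$, since the pushforward of $\tilde\rho_0$ by an affine-on-each-piece map $\Phi_T$ can be matched to $\tilde\rho_T$ up to a distortion controlled by the Jacobians.

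\textbf{Elementary ReLU primitives.} Three special choices of $(w,a,b)$ already provide rich behaviour. (i) With $a\equiv 0$ one gets the constant field $w\,\sigma(b)$, i.e.\ a pure translation. (ii) For $a\neq 0$ the field vanishes on the closed half-space $\{\langle a,x\rangle+b\leq 0\}$, leaving mass there invariant; this is the crucial localization mechanism built into the ReLU. (iii) If moreover $a\perp w$, then $\langle a,x(t)\rangle+b$ is conserved along trajectories, so on the active half-space the flow is the affine shear $x\mapsto x+tw(\langle a,x\rangle+b)$. Concatenated in time, these primitives produce localized one-sided translations and shears along any prescribed hyperplane; they preserve the shape of a cube of side $h$ up to a shear of amplitude $O(th)$, hence an $L^1$ reconstruction error $O(h)$ per cube.

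\textbf{Inductive transport.} Arguing inductively over $i=1,\dots,N$, I would design a short sequence of primitives that (a) isolates $Q_i^0$ by suitable half-space cuts, (b) transports its content along a corridor into $Q_i^T$ while avoiding previously deposited cubes by keeping their positions in the inactive half-space of every neuron used, and (c) undoes any auxiliary shear at the end. The $L^1$ error per cube is $O(h)$, so summing over $i$ and choosing $N$ large enough (hence $h$ small enough) keeps the total error below $\epsilon/3$. The restriction to an arbitrary time horizon $T>0$ is absorbed by the rescaling $w\mapsto(T_0/T)w$, which runs any construction completed in time $T_0$ over time $T$ at the cost of amplifying $w$.

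\textbf{Main obstacle.} The delicate step is the inductive one: routing each source cube to its target using only one-sided half-space moves, without disturbing the cubes already placed and without accumulating affine distortion larger than $O(\epsilon/N)$ in $L^1$. In contrast with the Wasserstein version in \cite{ruiz2021neural}, here small-$W_1$ placement is not sufficient; one must make sure that the pushforward of each indicator $\mathbf{1}_{Q_i^0}$ is $L^1$-close to $\mathbf{1}_{Q_i^T}$, which is what forces the careful geometry of the shears and the final ``shape-straightening'' stage. This constructive geometric combinatorics, together with a bookkeeping of the number of time-switches and of the control amplitudes, is also what delivers the quantitative complexity and amplitude estimates, and consequently the statistical error bounds, announced in the abstract.
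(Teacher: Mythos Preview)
There is a genuine gap in your reduction step. You claim that after a common refinement one can write both approximations in the form
\[
\tilde\rho=\frac{1}{Nh^d}\sum_{i=1}^N \mathbf{1}_{Q_i}
\]
with $N$ \emph{disjoint} cubes of common side $h$. Such a function takes only the single nonzero value $1/(Nh^d)$, and an arbitrary probability density cannot be $L^1$-approximated by constant-height functions: take for instance $\rho=\mathbf{1}_{[0,1/2]}+\tfrac12\mathbf{1}_{[1/2,3/2]}$ in $d=1$, for which $\inf_{c,E}\|\rho-c\mathbf{1}_E\|_{L^1}\geq 1/2$. This is not a cosmetic issue, because the rest of your construction is tailored precisely to this (false) normal form. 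Indeed, your three primitives are all divergence-free: with $a=0$ the field is constant, and with $a\perp w$ one has $\mathrm{div}_x\big(w\,\sigma(\langle a,x\rangle+b)\big)=\langle w,a\rangle\,\sigma'(\cdot)=0$. Hence the flow you build is volume-preserving and the density value is conserved along trajectories; you can rearrange mass but you can never change its height. Without a mechanism that creates nonzero divergence you cannot pass from one piecewise-constant profile to another with different levels, which is exactly what the genuine $L^1$ approximation forces on you.

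The paper closes this gap by adding a \emph{compression/dilation} primitive (Lemma~\ref{Ldilcomp}), obtained by taking $w$ \emph{parallel} to $a$ in $1$-$d$, so that $\mathrm{div}_x V\neq 0$ and the density value on a half-line can be rescaled at will while the complementary half-line is left invariant. The $1$-$d$ proof then matches, interval by interval, the correct heights of the target approximation (Proposition~\ref{proptoquote}), and the multi-$d$ proof is an induction on the dimension: one first uses your shear primitive (Lemma~\ref{Lpt}) to slice the configuration so that it becomes constant in $x^{(d)}$, reducing to a $(d-1)$-dimensional problem, and ultimately to the $1$-$d$ case where the dilations do the real work. Your ``route each cube to its target'' picture is closer to the Wasserstein argument of \cite{ruiz2021neural}; for the $L^1$ statement you need the extra divergence-creating move, together with the $L^1$-contraction (Lemma~\ref{L1stab}) and time-reversibility that let one pass through an intermediate uniform density.
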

{\color{black}
\begin{remark}
 The control function $w(t)$ enters in a multiplicative manner, and its $L^\infty$-norm depends on $T$. By time scaling, one can fix the time horizon to be $T=1$ and then the norm of controls will solely depend on the nature of the densities to be controlled and the parameter $\epsilon$. Alternatively, one can fix the $L^\infty$-norm of $w(t)$ to be $1$, but then the time-horizon $T$ will depend on the data and $\epsilon$.
\end{remark}
}

 The methods we shall develop and employ are constructive. This will allow us to provide explicit bounds on the complexity of the resulting vector-fields (which can be measured in several ways)  in terms of the number of jump-discontinuities and their $BV$-norms. In addition to the upper bounds that our construction yields, we will also prove lower bounds in terms of the entropy gap between the two probability densities under consideration, the initial and the final one.

The techniques we develop, inspired on \cite{ruiz2021neural}, rely on the fact that nODEs enjoy the property of simultaneous or ensemble control (see also \cite{agrachev2022control, li2022deep,ruiz2022interpolation,sander2021momentum,tabuada2020universal,alessandro2022deep,elamvazhuthi2022neural}  and \cite{esteve2020large,esteve2021sparse,bonnet2023measure,alessandro2022deep, weinan2018mean,ma2022barron} for the controllability and optimal control/generalization frameworks, respectively ).

In accordance to the terminology in optimal transport, the result is also referred to as   $\epsilon$-coupling. The difference here with respect to the classical literature in optimal transport lies on the fact that the coupling (or control) is  assured by a vector field restricted by the ansatz \eqref{ansatz}.
The question of whether the coupling (or the control result) can be made exact arises naturally. This issue is discussed in the final section, where we prove the exact  coupling (or controllability) in $1-d$, under suitable hypotheses on the initial data and the target. This constitutes an open question in several space dimensions.
Furthermore, we will briefly discuss the relationship of these control strategies with classical rearrangements in optimal transport, \cite[Chapter 1]{villani2009optimal}.

Our study is motivated by normalising flows, \cite{kobyzev2020normalizing}, whose aim is to map a given known probability density to an unknown one out of a finite number of  samples, following the law of the later. By pairing the theorem above with  Chebyshev inequality we can conclude the following result of control in probability:

\begin{corollary}\label{CorolNF} (Control in probability)
Let be $T>0$,  two probability densities $\rho_0,\rho_T\in L^1(\mathbb{R}^d)$,  and assume that $\rho_T\in  W^{1,\infty}(\mathbb{R}^d)$ with Lipschitz constant $L$ and compact support.  Let $x_i,$ $i=1,...,N$ be i.i.d. random variables following the distribution given by the final probability density $\rho_T$. 

Then, for any $\varepsilon >0$ there exist piecewise constant controls $w, a\in BV((0,T);\mathbb{R}^d)$ and $b\in BV((0,T);\mathbb{R})$,  such that the solution of \eqref{NT} with
 $V(x,t)$ as in \eqref{ansatz}, satisfies, 
\begin{equation}\label{NF}
\|\rho(T)-\rho_T\|_{L^1(\mathbb{R}^d)}\leq \underbrace{\epsilon}_{\text{Approximate control error}}+\underbrace{C(N\tau)^{-1/(2+d)}}_{\text{Statistical error}}
\end{equation}
for all $\tau>0$, in probability $1- \tau $,
where the constant $C>0$ only depends on the dimension $d$ and the Lipschitz constant and the support of $\rho_T$.

The nature of the controls, and, in particular, its $BV$ norms, depend, in particular, on $T$, $x_i,$ $i=1,...,N$, $\rho_0$, and $\epsilon$ but not on $\tau$.

\end{corollary}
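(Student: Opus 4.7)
The plan is to split
\begin{equation*}
\|\rho(T)-\rho_T\|_{L^1(\mathbb{R}^d)}\le \|\rho(T)-\hat\rho_N\|_{L^1(\mathbb{R}^d)}+\|\hat\rho_N-\rho_T\|_{L^1(\mathbb{R}^d)},
\end{equation*}
where $\hat\rho_N$ is an empirical probability density built from the samples $x_1,\dots,x_N$. Conditionally on the sample realisation, $\hat\rho_N$ is a deterministic $L^1$ density, so Theorem~\ref{TH:aprox} applied to the pair $(\rho_0,\hat\rho_N)$ delivers piecewise-constant controls $w,a\in BV((0,T);\mathbb{R}^d)$, $b\in BV((0,T);\mathbb{R})$ with $\|\rho(T)-\hat\rho_N\|_{L^1}\le\epsilon$. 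All the randomness is thereby concentrated in the statistical error $\|\hat\rho_N-\rho_T\|_{L^1}$, which is to be controlled by Chebyshev's inequality.

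For $\hat\rho_N$ I would use a histogram estimator on a cubic grid of side $h>0$: $\hat\rho_N=\sum_i(N_i/N)\mathbf{1}_{C_i}/h^d$, where $N_i=\#\{j:x_j\in C_i\}$. Let $\bar\rho_T$ denote the piecewise-constant grid projection of $\rho_T$. Since $\rho_T$ is $L$-Lipschitz and compactly supported, the bias is deterministic and bounded by $\|\bar\rho_T-\rho_T\|_{L^1}\le C_1 L h$, with $C_1$ depending on $d$ and $|\mathrm{supp}(\rho_T)|$. For the stochastic part, writing $\|\hat\rho_N-\bar\rho_T\|_{L^1}=\sum_i|N_i/N-p_i|$ with $p_i=\int_{C_i}\rho_T$, Cauchy--Schwarz over the $M\lesssim h^{-d}$ occupied cubes and the binomial variance yield
\begin{equation*}
\mathbb{E}\bigl[\|\hat\rho_N-\bar\rho_T\|_{L^1}^2\bigr]\le \frac{C_2\,M}{N}\le \frac{C_3}{Nh^d}.
\end{equation*}
Chebyshev's inequality then gives, with probability at least $1-\tau$, $\|\hat\rho_N-\rho_T\|_{L^1}\le C_1 L h+C_4/\sqrt{Nh^d\tau}$. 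Balancing bias and fluctuation via $h\sim(N\tau)^{-1/(2+d)}$ produces the statistical rate $C(N\tau)^{-1/(2+d)}$, and combining this with the controllability estimate through the triangle inequality yields \eqref{NF}.

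The main delicate point is the statistical estimate. A naive Markov bound on the expected $L^1$ error would give only the weaker $\tau^{-1}N^{-1/(2+d)}$; extracting the announced $(N\tau)^{-1/(2+d)}$ requires (i) isolating the deterministic bias before invoking any concentration and (ii) passing to the squared $L^1$ norm via Cauchy--Schwarz so that Chebyshev can exploit the variance rather than the expectation. The bandwidth $h$ must then be tuned jointly in $N$ and $\tau$. Once these two ingredients are in place, the remaining steps---checking that $\hat\rho_N$ is a genuine probability density and invoking Theorem~\ref{TH:aprox}---are routine, and the asserted dependence of the controls on $(T,x_i,\rho_0,\epsilon)$ follows from the construction of Theorem~\ref{TH:aprox} applied to the fixed target $\hat\rho_N$.
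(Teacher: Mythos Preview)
Your proposal is correct and follows essentially the same architecture as the paper: build a histogram estimator $\hat\rho_N=\rho_{T,h,N}$ from the samples, split the error into a deterministic Lipschitz bias $O(Lh)$ and a stochastic fluctuation controlled by Chebyshev, optimise $h\sim(N\tau)^{-1/(2+d)}$, and then invoke Theorem~\ref{TH:aprox} with $\hat\rho_N$ as target. The only notable difference is in the concentration step: the paper applies Chebyshev \emph{cell by cell} and a union bound to obtain first an $L^\infty$ bound (Lemma~\ref{L:LLN}), which it then degrades to $L^1$ by multiplying by $|\mathrm{supp}(\rho_T)|$; you instead apply Cauchy--Schwarz across cells to bound $\mathbb{E}\bigl[\|\hat\rho_N-\bar\rho_T\|_{L^1}^2\bigr]\le M/N$ directly and use a single second-moment Markov/Chebyshev bound. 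Your route is slightly more economical (it yields a smaller power of $|\mathrm{supp}(\rho_T)|$ in the constant and avoids the detour through $L^\infty$), while the paper's version has the minor advantage of delivering an $L^\infty$ estimate as a by-product; both lead to the same intermediate bound~\eqref{NF2} and hence to~\eqref{NF}.
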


\begin{remark}
Several comments are in order.
\begin{itemize}
\item The statistical error depends only on the target density $\rho_T$. However, the controls needed to achieve this result also depend on the initial density $\rho_0$, so to achieve the approximate control error $\epsilon$.

\item The idea of the proof is as follows: Out of $x_i,$ $i=1,...,N$  , through a finite-difference construction with mesh $h$, we build a target probability density $\rho_{T, N, h}$ that, given the approximate control parameter $\epsilon$ in the first term of  \eqref{NF}, allows us to build the needed control using $\rho_{T, N, h}$ as target.

\item It is then essential to estimate the distance between $\rho_{T, N, h}$  and the original target $\rho_{T}$. This can only be done in a probabilistic sense. 

\item  This procedure leads to an estimate of the form
\begin{equation}\label{NF2}
\|\rho(T)-\rho_T\|_{L^1(\mathbb{R}^d)}\leq \underbrace{\epsilon}_{\text{Approximate control error}}+\underbrace{\sqrt{\frac{2|\mathrm{supp}(\rho_T)|^3}{\tau N h^d}}}_{\text{Statistical error}}+\underbrace{|\mathrm{supp}(\rho_T)|Lh\sqrt{d}}_{\text{Target approximation error}}.
\end{equation}

 The last error term in \eqref{NF2} is due to a classical finite-difference approximation of $\rho_T$, name it $\rho_{T,h}$. The second one is of a probabilistic nature, on how close $\rho_{T,N,h}$ and  $\rho_{T,h}$ are, and it is a consequence of the Chebyshev inequality. {\color{black} This is a simple large deviation estimate (see \cite{hollander2000large}}).

\item Estimate \eqref{NF} holds as a direct consequence of \eqref{NF2} by making an optimal choice of the constant $h>0$ so that the last two terms in \eqref{NF2} coincide. Eventually we get \eqref{NF} with a constant $C$ of the order of
\begin{equation}\label{fatconstant}
C= \left [2 |\mathrm{supp}(\rho_T)| \right ]^{(3+d)/(d+2)} \left [ L \sqrt d\right ]^{d/(d+2)}.
\end{equation}
\item Our constructive methods yield also estimates on the complexity of the vector fields employed to achieve \eqref{NF}. This  boils down essentially to the method of proof of Theorem \ref{TH:aprox}.

\item In view of \eqref{NF}, the only way to reduce the statistical error without increasing the target approximation error is by increasing the number of samples $N$, which is in agreement with common sense. 

For both error terms in  \eqref{NF} to be of the same order
$
(N\tau)^{-1/(2+d)} \sim \epsilon
$ we need 
$
N\tau  \sim \epsilon^{-(2+d)}
$
which is a manifestation of the well-known curse of dimensionality (\cite{weed2019sharp,dudley1969speed,fournier2015rate}). The aforementioned references deal with the Wasserstein distances. Notice however that  the generated $L^1$-approximation $\rho_{T,N,h}$ is also close  to the empirical measure in the Wasserstein distance. 

\end{itemize}

\end{remark}

Various approaches can be adopted to build and analyse  normalising flows.  Classically the problem is reformulated as the minimisation of a suitable functional, namely the log likelihood or the KL-divergence,  see \cite{papamakarios2021normalizing}. We do not adopt an optimisation approach but rather a controllability perspective. We do it constructing explicit controls, obeying the neural network ansatz \eqref{ansatz}, and quantifying the complexity of such controls. These estimates can be interpreted in terms of  the number of layers one would need for deep discrete ResNets. As it is classical in control, our controllability results can, a posteriori,  also be used  to derive valuable estimates for the optimisation approach. We present them in the context of the time-$BV$ regularisation of controls, which allows us to qualitatively observe  the key features of the target $\rho_T$ determining the control norms.

Let us finally briefly comment on the existing related literature. Articles \cite{zech2022sparse,zech2022sparse2} deal with deep ReLU recurrent neural networks, not of residual type, to reproduce the Kn\"othe-Rosenblatt rearrengement \cite{villani2009optimal}. In \cite{baptista2023approximation} polynomial vector fields are considered, as well as tensor products of splines and four-layer feedforward neural networks with ReLU activation functions. 
In \cite{grathwohl2018ffjord}  a minimization approach is employed using an ODE with a nonlinearity represented by a neural network. In \cite{albergo2022building} the vector field is found via a minimisation of a quadratic loss. In \cite{rozen2021moser} Moser flows are used (\cite[Chapter 1]{villani2009optimal}) to control from one probability density to another.

This paper is organized as follows. In the next Section \ref{S:deformations} we present the main geometric deformations and vector fields that will be employed to achieve the control of the neural transport equation \eqref{NT}. Later, in  Section \ref{S:proof}, we give the proof of the main theorem. Afterwards, in Section \ref{S:bounds}, we prove lower bounds on the controls depending on the entropy gap and upper bounds   for a specific class of initial and target distributions. We also qualitatively discuss the features of the target and initial data that inevitably  increase the control norms. In Section \ref{S:probability} we prove Corollary \ref{CorolNF}. Finally, in Section \ref{S:Conclusion}, we present a sketch of the proof of an exact controllability result in $1-d$  and a discussion on classical rearrangements for coupling  measures and some further concluding remarks.

\section{Fundamental geometric deformations}\label{S:deformations}
In this Section we present the main geometric deformations and vector fields that will be employed through the article. The vector fields we shall construct  explicitly and employ are piecewise constant, combining the following basic geometric transformations:
\begin{enumerate}
\item Compressions and dilations of part of the support of the density without altering the other one;
\item Translations of part of the support of the density in $1-d$;
\item Translations of part of the support of the density, parallel to a given hyperplane, without altering the other one, in arbitrary dimensions $d$.
\end{enumerate}
Composing these motions induced by the ReLU activation function, one can generate the  flow corresponding to a piecewise linear regularised version of the Heaviside function or truncated ReLU (see Figure \ref{reluheaviside}):
\begin{equation}
\sigma(x)=\begin{cases}
0 \text{ if }x\leq 0\\
x/\epsilon \text{ if }0\leq x\leq \epsilon\\
1 \text{ if }x\geq 1.
\end{cases}
\end{equation}

The regularised Heaviside function introduces less dispersion in the dynamics and therefore it is easier to employ. Our results also hold for this activation function but we chose to present them for the most frequently employed ReLU activation function. The costs of the controls are more easily computed and have lower norms in the later case.  

\begin{figure}
\includegraphics[scale=0.2]{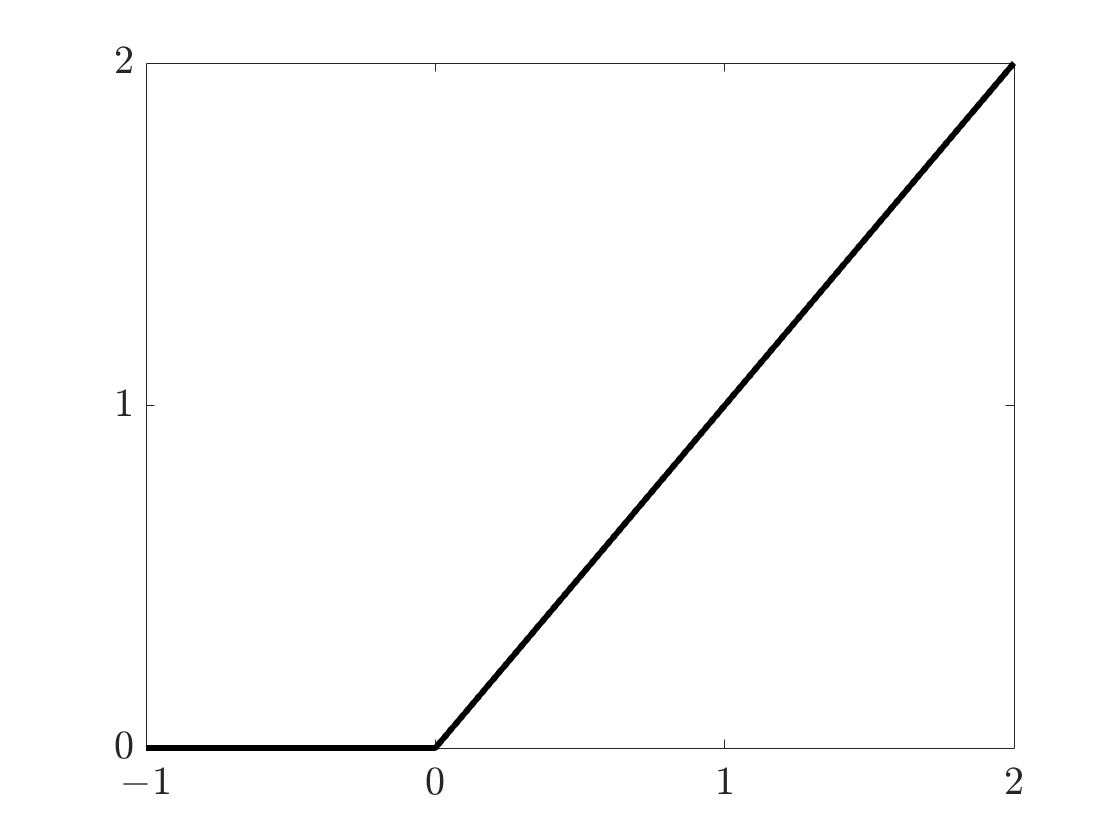}
\includegraphics[scale=0.2]{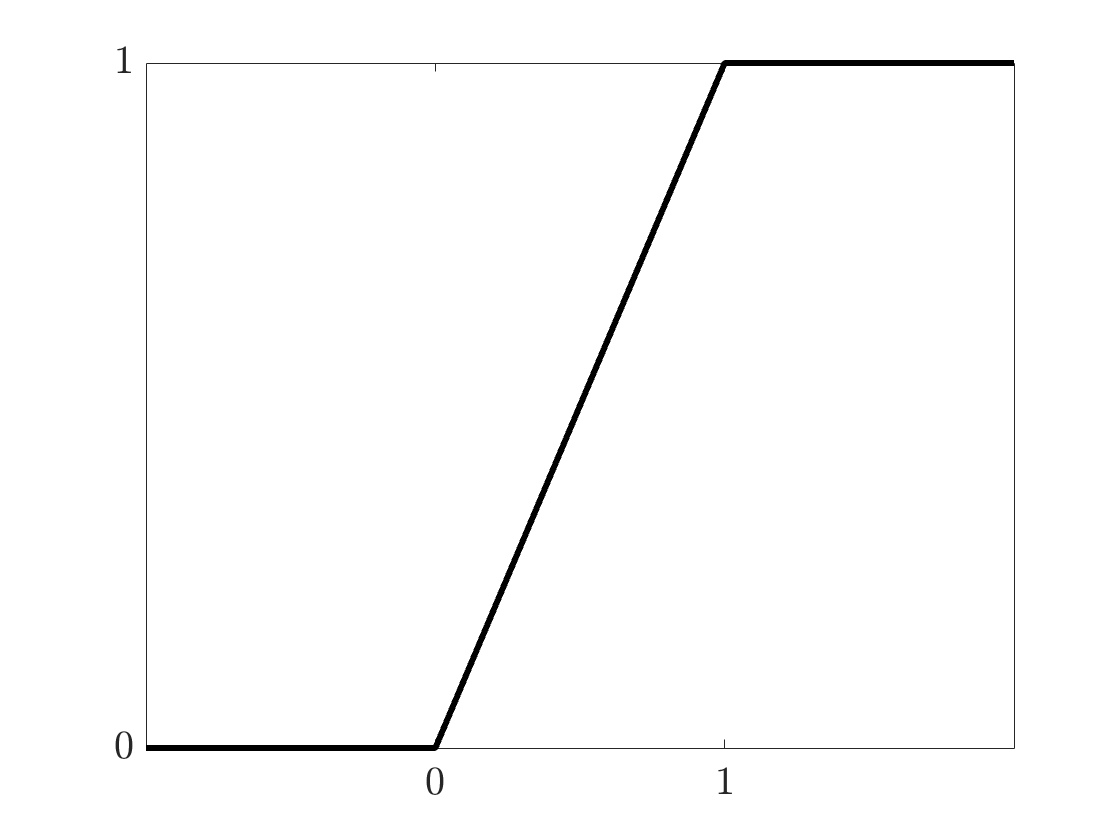}
\caption{Left: ReLU activation function. Right: regularized Heaviside function or  truncated ReLU.}\label{reluheaviside}
\end{figure}

\subsection{$1-d$: Dilation, compression and translation.}

Consider the $1-d$ transport or continuity equation
\begin{equation}\label{1dNT}
\begin{cases}
\partial_t\rho+\partial_x\big(w\sigma(x -b)\rho\big)=0 \quad (x,t)\in\mathbb{R}\times (0,T)\\
\rho_0=\eta\mathbb{1}_{(x_0,x_0+1/\eta)}.
\end{cases}
\end{equation}
Here the initial datum is the characteristic function of an interval and we consider the constant controls, $w$, $b$, and  $a=1$.

It is easy to see that:
\begin{enumerate}
\item When $b=x_0$,
the solution of  \eqref{1dNT} is:
$$ \rho(x,t)=\eta e^{-wt}\mathbb{1}_{(x_0,x_0+(1/\eta) e^{wt})}(x).$$
When $w>0$ the above transformation corresponds to a  \textbf{dilation} and when $w<0$  to a \textbf{compression} (see Figure \ref{Fdilcomp}).

\item  When $b<x_0$, the solution of the above equation is:
$$ \rho(x,t)=\eta e^{-wt}\mathbb{1}_{(b+(x_0-b)e^{wt},b+(x_0+1/\eta-b)e^{wt})}(x).$$
Note that the solution is the same as the previous one but \textbf{translated} by $(x_0-b)e^{wt}$.

\item  When $b\in (x_0,x_0+1/\eta)$, the solution is:
$$ \rho(x,t)=\eta\mathbb{1}_{(x_0,b)}(x)+\eta e^{-wt}\mathbb{1}_{(b,b+(x_0+1/\eta-b)e^{wt})}(x).$$
In this case the density gains a discontinuity but it is still piecewise constant as Figure \ref{Fdisc} shows.
\begin{figure}
\includegraphics[scale=0.2]{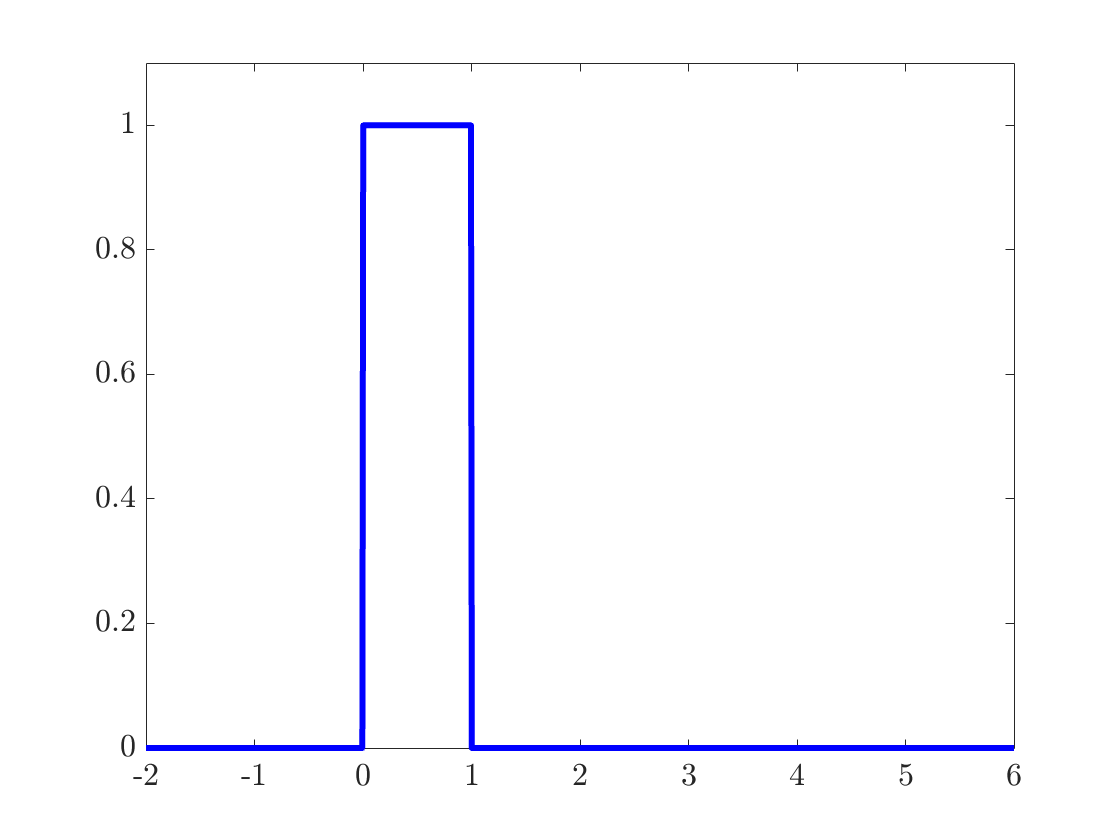}
\includegraphics[scale=0.2]{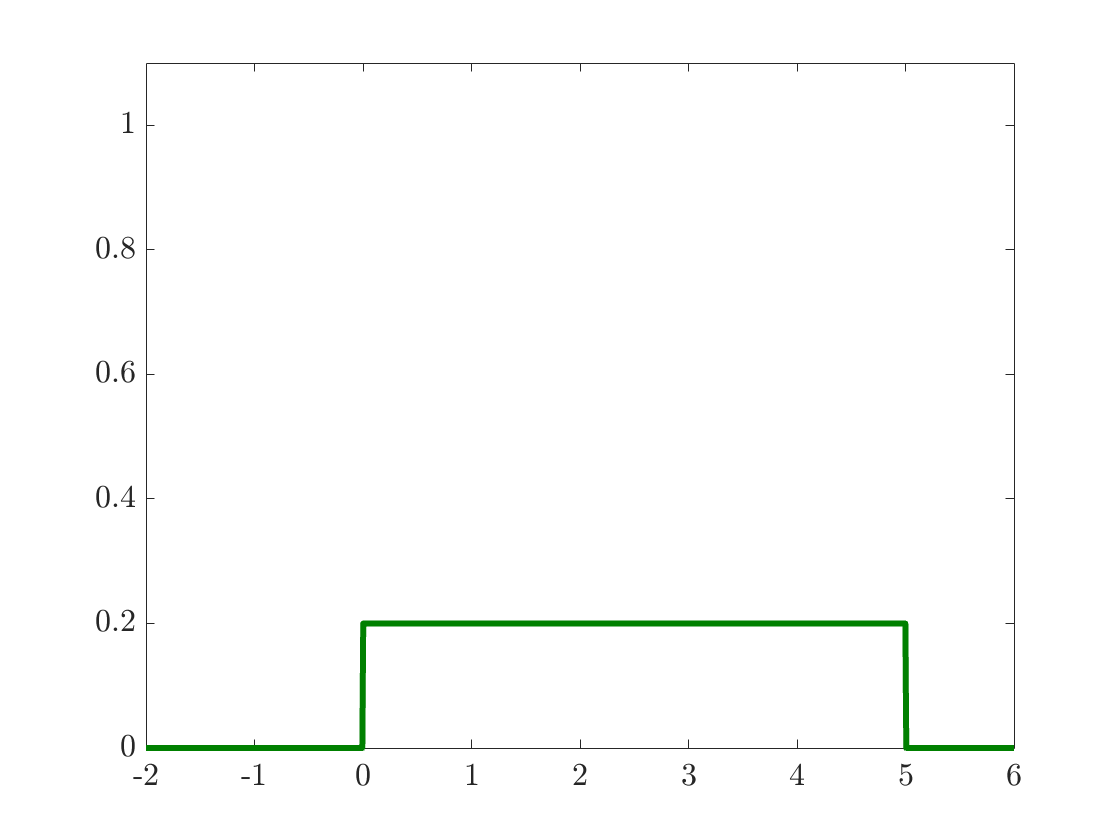}
\caption{Dilation/compression. Left: in blue, the initial density. Right: in green, the density after a transformation  with parameters $b=x_0=0$, $a=1$ and $w\neq 0$.}\label{Fdilcomp}
\end{figure}
\begin{figure}
\includegraphics[scale=0.2]{fig1.png}
\includegraphics[scale=0.2]{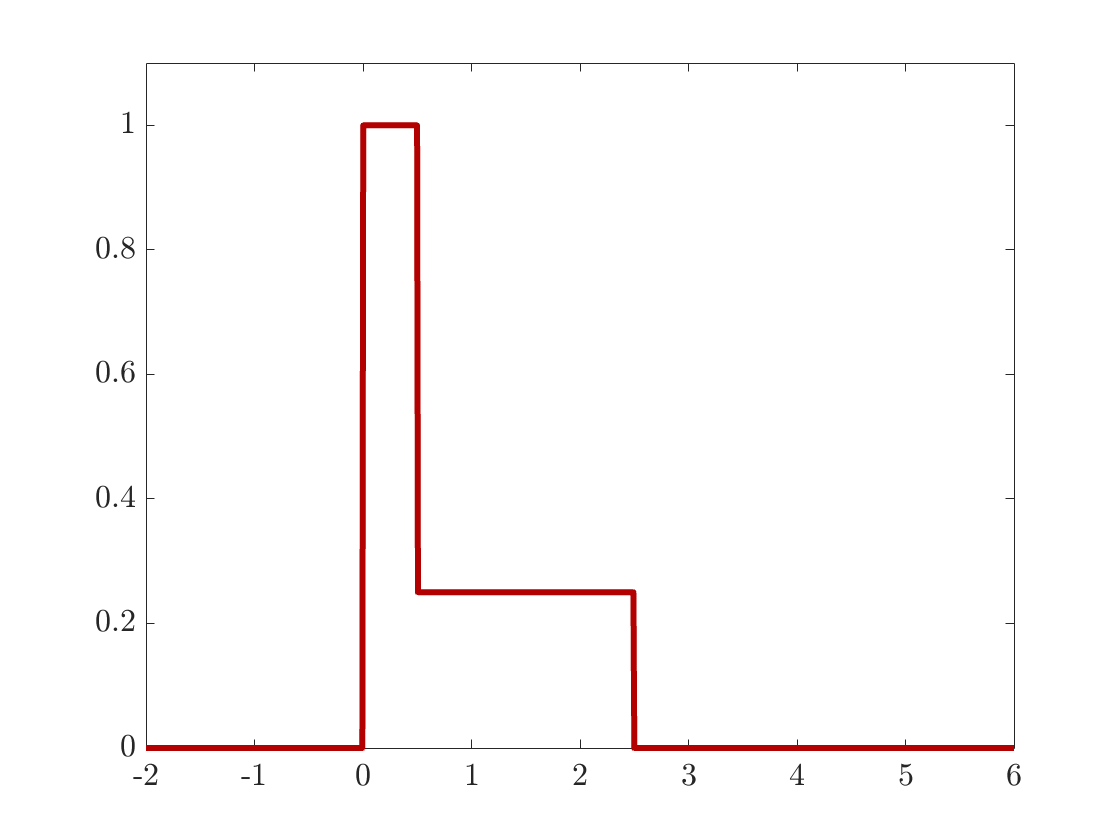}
\caption{Dilation/compression process  in Lemma \ref{Ldilcomp}. Left: in blue, the initial data. Right: in red, the final state after applying the control $b=0.5$, $a=1$ and $w\neq 0$. A new discontinuity is generated but the state remains piecewise constant. This property plays a key role to prove the control result in $1-d$.}\label{Fdisc}
\end{figure}

\end{enumerate}

The following result summarises these facts:
\begin{lemma}[Compression/Dilation]\label{Ldilcomp}
Let us consider the following initial and target data
$$\rho_0(x)=\begin{cases}
\eta\mathbb{1}_{(x_1,x_2)}(x)&\text{ if } x\geq x_1\\
\rho_{0,2}(x)&\text{ if } x<x_1
\end{cases}
$$
with $\rho_{0,2}\in L^1(\mathbb{R})$ and $x_2>x_1$ and

$$\rho_T(x)=\begin{cases}
\frac{\eta(x_2-x_1)}{(y_2-x_1)}\mathbb{1}_{(x_1,y_2)}(x)&\text{ if } x\geq x_1\\
\rho_{0,2}(x)&\text{ if } x<x_1
\end{cases}
$$
for any $y_2>x_1$. Then, there exist real numbers $w,a$ and $b$ such that the solution of the transport equation $\rho=\rho(x,t)$ maps $\rho_0$ into $\rho_T$, i.e.\begin{equation} 
\begin{cases}
\partial_t\rho+w\partial_x\big(\sigma(ax +b)\rho\big)=0 \quad (x,t)\in\mathbb{R}\times (0,T)\\
\rho(0)=\rho_0,\quad \rho(T)=\rho_T.
\end{cases}
\end{equation}

\end{lemma}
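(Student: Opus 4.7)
The plan is to exploit the ReLU in a very direct way: choose the parameters $a$ and $b$ so that the velocity field is identically zero on the half-line $(-\infty, x_1]$ where $\rho_{0,2}$ lives, and linear on $[x_1,\infty)$ where the piecewise-constant slab sits. Concretely, I would take $a=1$ and $b=-x_1$, so that $\sigma(ax+b) = \max(x-x_1,0)$ vanishes for $x \le x_1$ and equals $x-x_1$ for $x \ge x_1$. Because the velocity $V(x,t)=w\,\sigma(x-x_1)$ vanishes on $(-\infty,x_1]$ and, in particular, at the interface $x=x_1$, no mass crosses $x_1$ in either direction; hence the part $\rho_{0,2}$ is left unchanged throughout $[0,T]$, which is exactly what the target requires.

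Next I would analyse the characteristics to the right of $x_1$. There the ODE is $\dot x = w(x-x_1)$, which integrates explicitly to $X(t;x_0) = x_1 + (x_0-x_1)e^{wt}$. This flow keeps $x_1$ fixed and maps the interval $(x_1,x_2)$ onto $(x_1,\,x_1+(x_2-x_1)e^{wT})$ at time $T$. To hit the desired endpoint $y_2$, I would solve
\begin{equation*}
x_1+(x_2-x_1)e^{wT}=y_2 \quad\Longleftrightarrow\quad w=\frac{1}{T}\log\!\left(\frac{y_2-x_1}{x_2-x_1}\right),
\end{equation*}
which is well defined since $y_2>x_1$ and $x_2>x_1$; the sign of $w$ is positive in the dilation case $y_2>x_2$ and negative in the compression case $y_2<x_2$, matching the two scenarios sketched earlier in the section.

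Finally, I would verify the density transformation. Since $\sigma$ is globally Lipschitz, the continuity equation \eqref{NT} is well posed in the DiPerna--Lions sense and the solution is the push-forward of $\rho_0$ by the flow. A uniform initial density $\eta$ transported by an affine flow with Jacobian $e^{wT}$ yields the uniform density $\eta e^{-wT}$ on the image interval. Substituting $e^{wT}=(y_2-x_1)/(x_2-x_1)$ gives exactly
\begin{equation*}
\eta e^{-wT}=\frac{\eta(x_2-x_1)}{y_2-x_1},
\end{equation*}
which together with the invariance of $\rho_{0,2}$ on $(-\infty,x_1)$ reproduces $\rho_T$. There is no real obstacle here; the only point that merits care is the continuity/consistency at the interface $x=x_1$: since the velocity is Lipschitz and vanishes at $x_1$, the trajectories starting on the two sides never meet or separate, so no jump in mass can be created at $x_1$ by the flow, and the piecewise-constant structure (with the possible jump at $x=x_1$ coming from $\rho_{0,2}(x_1^-)$ versus $\eta$) is preserved.
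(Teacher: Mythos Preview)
Your proof is correct and follows essentially the same approach as the paper: the paper does not give a separate proof of this lemma but instead presents, immediately before it, the explicit solution of the $1$-$d$ transport equation with $a=1$, $b=x_1$ (in their convention $\sigma(x-b)$), obtaining $\rho(x,t)=\eta e^{-wt}\mathbb{1}_{(x_1,x_1+(x_2-x_1)e^{wt})}$ and noting that the region $x<x_1$ is left invariant; the lemma is then stated as a summary of these computations. Your choice $a=1$, $b=-x_1$, the integration of the characteristic ODE $\dot x=w(x-x_1)$, and the determination $w=T^{-1}\log\big((y_2-x_1)/(x_2-x_1)\big)$ reproduce exactly this argument.
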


Combining the translation generated in case (2) above with a compression (1) we can generate a translation as Figure \ref{Fptrans} shows. The following Lemma captures this effect, concatenating two controls with $a\neq 0$. One can choose the controls so that  the vector field vanishes on part of the Euclidean space, generating a translation only on its complement.

\begin{figure}
\includegraphics[scale=0.125]{fig1.png}
\includegraphics[scale=0.125]{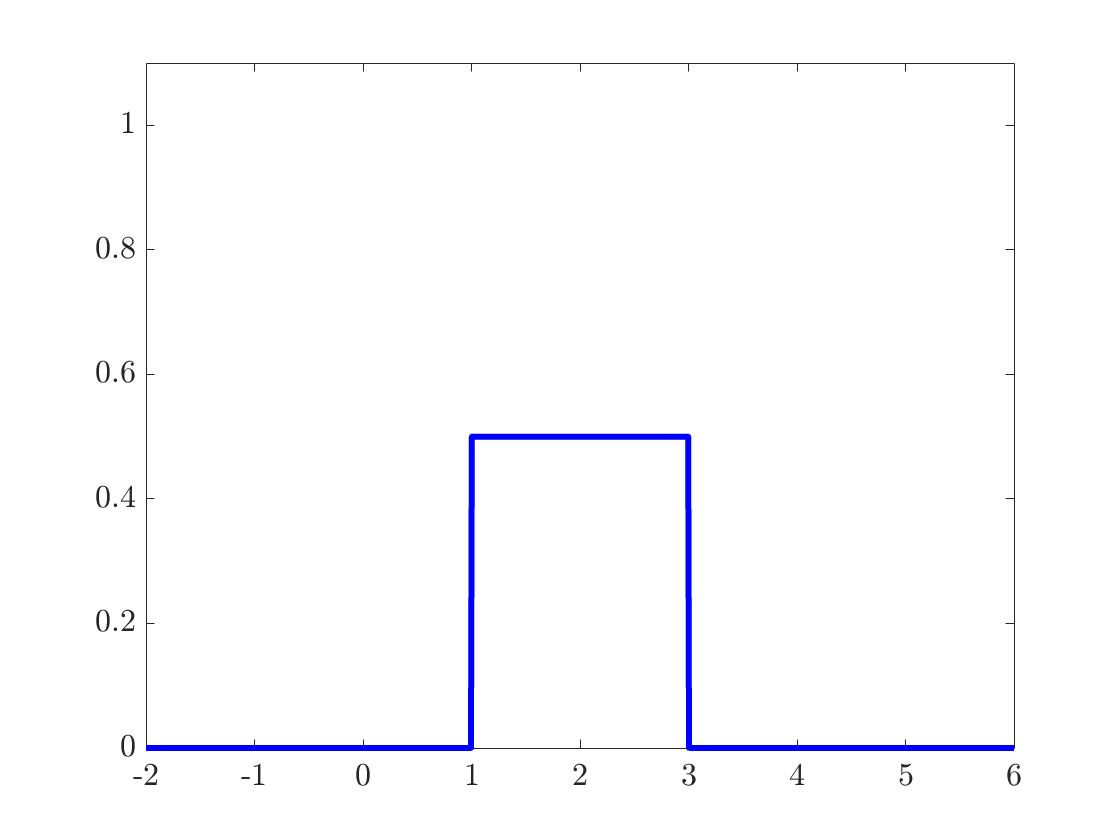}
\includegraphics[scale=0.125]{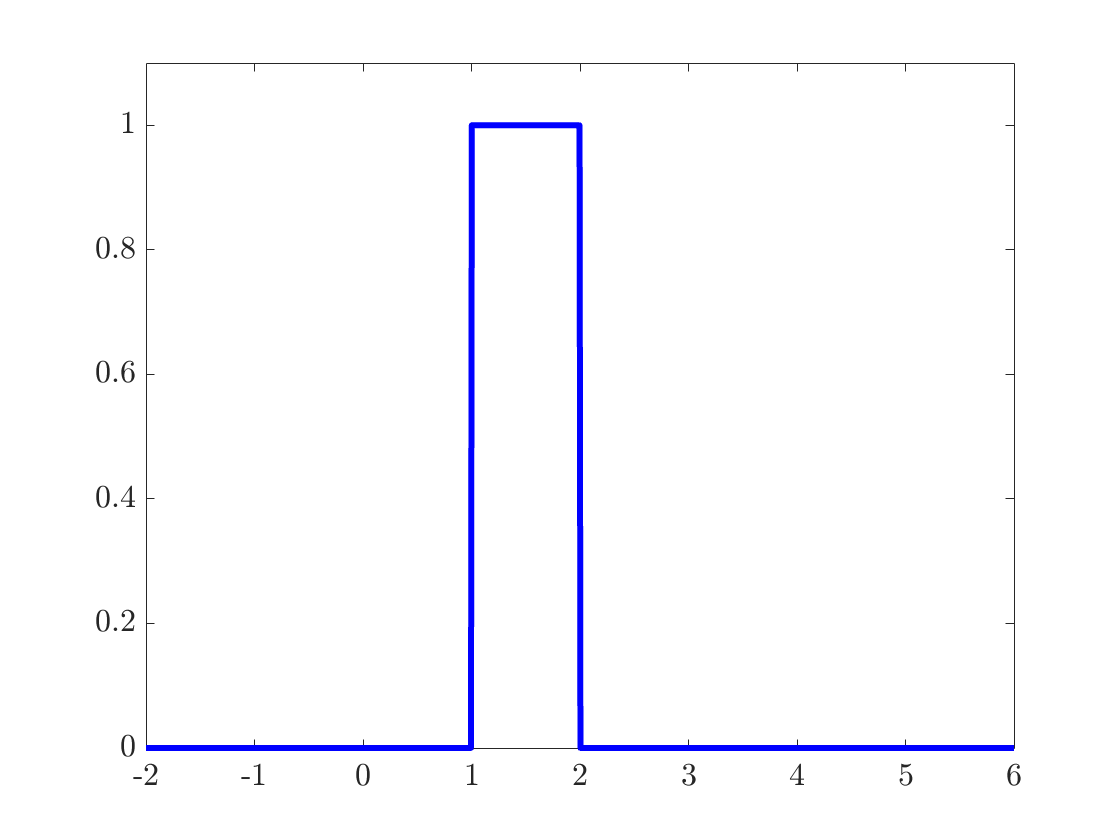}

\caption{Left: the initial data. Middle:  the translation and dilation generated by the first control using $a=1$, $b=-1$ and $w>0$. Right: the second control with $a=1$, $b= 1$ and $w<0$, generates a contraction, reversing the dilation effect generated in the first step, but keeping the translation one. Observe that, thanks to the choice of the controls and the fact that the ReLU vanishes when $ax+b<0$, the vector field vanishes on the set $\{x\in \mathbb{R}: \, x\leq -1\}$. The overall effect of the concatenation of these two controls is the translation of the initial mass by one unit to the right.}\label{Fptrans}
\end{figure}

\begin{lemma}[Translation]\label{1dtrans}
Consider $d=1$ and let us consider the following initial data
$$\rho_0= \sum_{i=1}^N m_i\mathbb{1}_{(x_{i,1},x_{i,2})}+\rho_{0,2}$$
where
\begin{itemize}
\item $(x_{i,1},x_{i,2})\cap (x_{j,1},x_{j,2})=\emptyset$ if $j\neq i$
\item For a certain $\epsilon>0$, $\sup \{x\in \mathrm{supp}(\rho_{0,2})\}+\epsilon < \min\{x_{i,1}, i\in\{1,...,N\}\}=:x_{1,1}.$
\end{itemize}
Then, for any $\kappa\in [-r,+\infty)$ with $r>x_{1,1}-\sup\{x\in \mathrm{supp}(\rho_{0,2})\}$ there exist piecewise constant controls $w,a\in BV((0,T);\mathbb{R})$ and $b\in BV((0,T);\mathbb{R})$ such that 
\begin{equation}
\begin{cases}
\partial_t\rho+\partial_x\big(w\sigma( ax +b)\rho\big)=0 \quad (x,t)\in\mathbb{R}\times (0,T)\\
\rho(0)=\sum_{i=1}^N m_i\mathbb{1}_{(x_{i,1},x_{i,2})}+\rho_{0,2}\\
\rho(T)=\sum_{i=1}^N m_i\mathbb{1}_{(x_{i,1}+\kappa,x_{i,2}+\kappa)}+\rho_{0,2}.
\end{cases}
\end{equation}

Furthermore $a(t)\equiv 1$ is constant and the number of discontinuities of the controls $w$ and $b$ is $2$. 
\end{lemma}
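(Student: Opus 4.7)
The plan is to realise the translation by concatenating two constant-in-time pieces of control, both with $a\equiv 1$, whose offsets are chosen so that the inactive half-line of the ReLU contains $\mathrm{supp}(\rho_{0,2})$ at every instant---the principle illustrated in Figure~\ref{Fptrans}. Concretely, I would pick two ``wall'' locations $c_1,c_2\in(\sup\mathrm{supp}(\rho_{0,2}),\,x_{1,1})$ inside the gap and two positive durations $t_1,t_2$ with $t_1+t_2=T$, and apply $(a,b,w)=(1,-c_1,w_1)$ on $(0,t_1)$ followed by $(a,b,w)=(1,-c_2,w_2)$ on $(t_1,T)$. Writing $\lambda_i=e^{w_i t_i}$, Lemma~\ref{Ldilcomp} identifies the characteristic flow of phase~$i$ as $\Phi_i(x)=x$ for $x\leq c_i$ and $\Phi_i(x)=c_i+\lambda_i(x-c_i)$ for $x>c_i$. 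Since $c_i>\sup\mathrm{supp}(\rho_{0,2})$, the velocity field vanishes on $\mathrm{supp}(\rho_{0,2})$ during both phases, leaving $\rho_{0,2}$ untouched.

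On the active side, composing the two affine branches yields
\begin{equation*}
(\Phi_2\circ\Phi_1)(x)=\lambda_1\lambda_2\,x+c_1(1-\lambda_1)\lambda_2+c_2(1-\lambda_2),
\end{equation*}
so requiring $(\Phi_2\circ\Phi_1)(x)=x+\kappa$ on the mass reduces to the two scalar conditions
\begin{equation*}
\lambda_1\lambda_2=1,\qquad (c_1-c_2)(\lambda_2-1)=\kappa.
\end{equation*}
The first enforces that the Jacobian contractions of the two phases cancel exactly, so that the heights $m_i$ of the piecewise constant intervals are recovered. The second determines $\lambda_2$, hence $\lambda_1=1/\lambda_2$: for $\kappa\geq 0$ I would take $c_1>c_2$ and $\lambda_2\geq 1$, with $\lambda_2$ arbitrarily large; for $\kappa<0$ I would still take $c_1>c_2$ but with $\lambda_2\in(0,1)$, which forces $|\kappa|<c_1-c_2$. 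By choosing $c_1$ close to $x_{1,1}$ and $c_2$ close to $\sup\mathrm{supp}(\rho_{0,2})$, the width $c_1-c_2$ can be pushed arbitrarily near the full gap $x_{1,1}-\sup\mathrm{supp}(\rho_{0,2})$, so every $\kappa$ in the admissible interval $[-r,+\infty)$ is reached.

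It remains to verify that the construction is geometrically consistent and to count the discontinuities of the controls. At $t=t_1$ the leftmost moving mass has reached $c_1+\lambda_1(x_{1,1}-c_1)>c_1>c_2$, so phase~2 acts on the entirety of it as intended, and throughout both phases no mass ever crosses $\sup\mathrm{supp}(\rho_{0,2})$ towards $\rho_{0,2}$. The pair $(w,b)$ is piecewise constant with a single jump at $t=t_1$ in each coordinate (two discontinuities in total), while $a\equiv 1$, matching the statement. The only step that is not purely algebraic is the feasibility check for negative $\kappa$, where one must invoke the hypothesis tying $r$ to the gap $x_{1,1}-\sup\mathrm{supp}(\rho_{0,2})$ in order to guarantee that $|\kappa|$ stays strictly below the width of the available corridor.
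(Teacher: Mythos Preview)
Your proposal is correct and follows essentially the same two-phase strategy as the paper: set $a\equiv 1$, place the ReLU ``walls'' to the right of $\mathrm{supp}(\rho_{0,2})$ so that this part stays frozen, and tune the two exponential factors so that $\lambda_1\lambda_2=1$ and the affine composition is the desired translation. The only cosmetic difference is in the parametrisation: the paper fixes $b_1$ in the gap and then takes the second wall at the new leftmost mass point $b_2=b_1+(x_{1,1}-b_1)e^{w_1T/2}$ with $w_2=-w_1$, obtaining $\kappa(w_1)=(x_{1,1}-b_1)(e^{w_1T/2}-1)$, whereas you keep both walls $c_1>c_2$ in the original gap and solve $(c_1-c_2)(\lambda_2-1)=\kappa$; both lead to the same achievable range $\kappa\in(-(x_{1,1}-\sup\mathrm{supp}(\rho_{0,2})),+\infty)$ and the same count of discontinuities.
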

Figure \ref{Lillus} illustrates Lemma \ref{1dtrans}. The density $\rho_{0,2}$ is left invariant while $\sum_{i=1}^N m_i\mathbb{1}_{(x_{i,1},x_{i,2})}$ has been translated by the constant $\kappa$.
\begin{figure}
\includegraphics[scale=0.15]{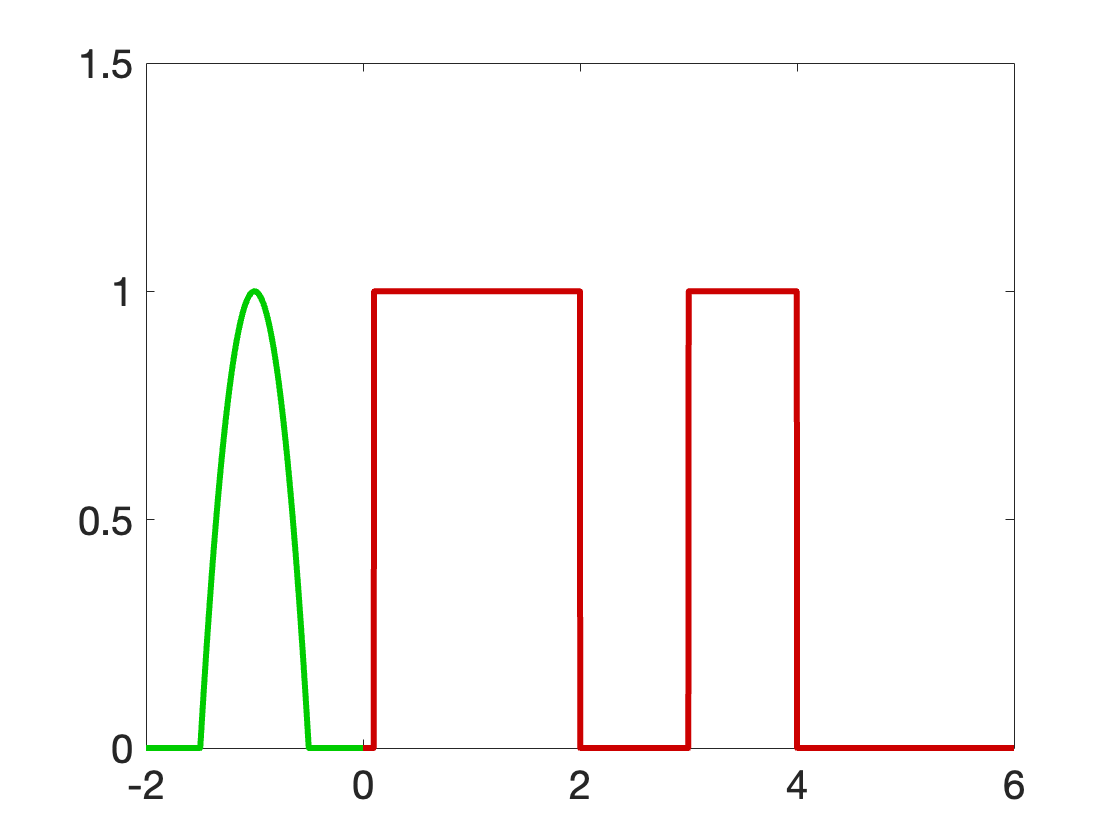}
\includegraphics[scale=0.15]{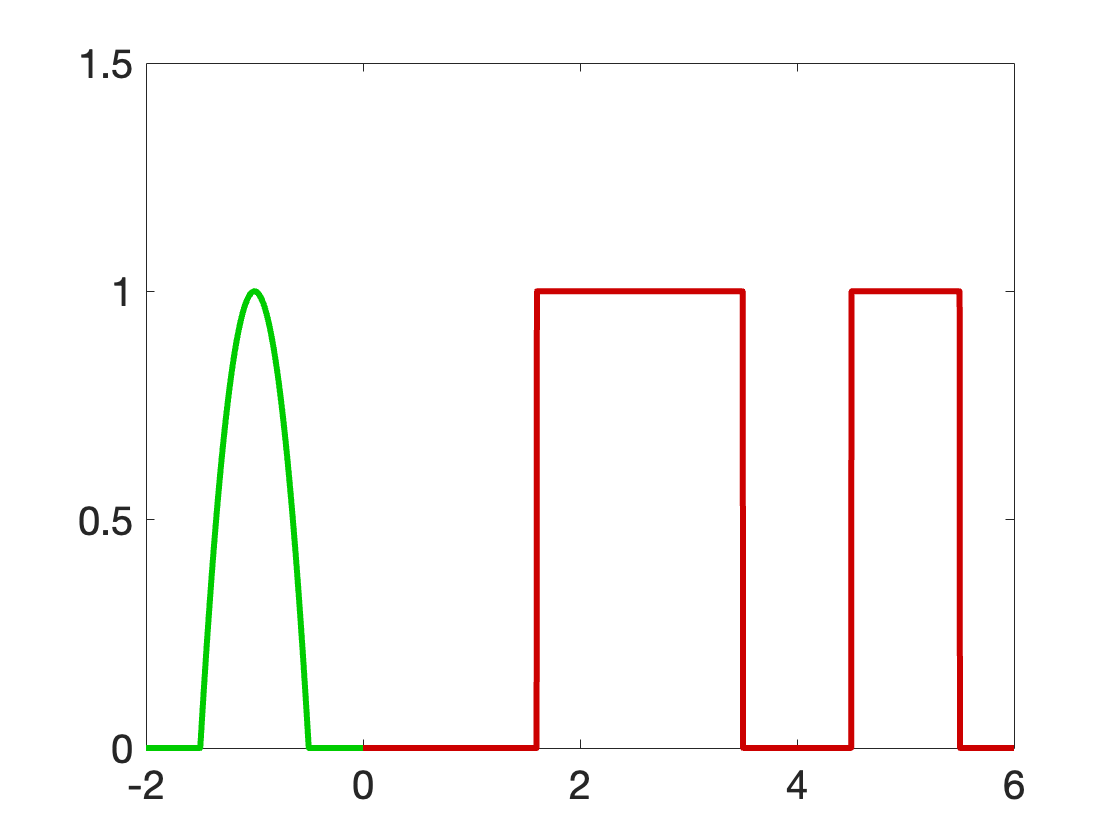}
\caption{Illustration of Lemma \ref{1dtrans}. Left: the initial datum $\rho_0$. Right: the target $\rho(T)$. In green $\rho_{0,2}$, the component of the initial datum that remains unchanged, and in red $\sum_{i=1}^N m_i\mathbb{1}_{(x_{i,1},x_{i,2})}$ and $\sum_{i=1}^N m_i\mathbb{1}_{(x_{i,1}+\kappa,x_{i,2}+\kappa)}$ respectively, the translated components.}\label{Lillus}
\end{figure}

\begin{proof}
Without loss of generality assume that $x_{1,1}$ is the minimum among all $x_{i,1}$. {\color{black} In all this proof we set $a\equiv 1$. The controls $w$ and $b$ will be made of two constant arcs
$$w(t)=w_1\mathbb{1}_{(0,T/2)}(t)+w_2\mathbb{1}_{(T/2,T)}(t),\quad b(t)=b_1\mathbb{1}_{(0,t_1)}(t)+b_2\mathbb{1}_{(t_2,T)}(t), \quad a(t)=1$$
with constants $w_1,w_2,b_1,b_2$ to be determined later on}.

 Let us consider a number $b_1$ such that $b_1<x_{1,1}$ and $b_1>\sup \mathrm{supp}(\rho_{0,2})$.  
 By solving the characteristic ODE {\color{black}for $t\in (0,T/2)$}, each interval $(x_{i,1},x_{i,2})$ is transformed into
$$((x_{i,1}-b_1)e^{w_1T/2}+b_1,(x_{i,2}-b_1)e^{w_1 T/2}+b_1)\quad i=1,...,N.$$
 Now, in the interval $(T/2,T)$, we  choose  $b_2=(x_{1,1}-b_1)e^{w_1T/2}+b_1$ and $w_2=-w_1$
  obtaining that, at time $T$ the supports of the characteristic functions are
$$(x_{i,1}+(x_{1,1}-b_1)e^{w_1T/2}-(x_{1,1}-b_1),x_{i,2}+(x_{1,1}-b_1)e^{w_1T/2}-(x_{1,1}-b_1))\quad i=1,...,N.$$
{\color{black}
After these two transformations, the supports of the characteristic functions have experienced a translation by a constant given by
$$\kappa(w_1)=(x_{1,1}-b_1)e^{w_1T/2}-(x_{1,1}-b_1).$$
Note that $\kappa:\mathbb{R}\mapsto (-x_{1,1}+b_1,+\infty)  $ is bijective, therefore, we can fix any translation and obtain the necessary $w_1$ (and consequently a constant $b_2$) for which the statement of the Lemma holds.
}
\end{proof}

\subsection{Parallel translation with respect to a hyperplane in dimension $d$.}

Our first objective is to show that there exist piecewise constant control functions $w,a\in BV((0,T);\mathbb{R}^d)$ and $b\in BV((0,T);\mathbb{R})$ such that the initial mass can be translated parallel to a given hyperplane
 \begin{equation}
\begin{cases}
\partial_t\rho+\mathrm{div}_x\big(w(t)\sigma(\langle a(t),x\rangle +b(t))\rho\big)=0 \quad (x,t)\in\mathbb{R}^d\times (0,T)\\
\rho(0)=\rho_0,\quad \rho(T)=\rho_0(\cdot+c(0,1,0,...,0))
\end{cases}
\end{equation}
for any $c\in \mathbb{R}$. 

 It suffices to observe  that,
by composing two specific controlled flows of the ReLU, we can obtain the same transformation as the one induced by a control acting on  the ``regularised" Heaviside function or truncated ReLU represented in Figure \ref{reluheaviside}. A similar fact was observed  in the previous subsection, where we could generate the translation of a half-line with two successive controls on the ReLU.

Let us proceed with some explicit simple computations:
\begin{enumerate}
\item  Apply the controls $b=-b_1$, $a=(1,0,...,0)$ and $w=-(0,1,0,...,0)$ to obtain
\begin{equation*}
x^{(k)}(t)=x^{(k)}_0\text{ if }k\neq 2; \,
x^{(2)}(t)=x_0^{(2)}-\max\{0,x^{(1)}-b_1\}t.
\end{equation*}
\item Then apply the controls $b=-b_2$, $a=(1,0,...,0)$ and $w=(0,1,0,...,0)$ to the previous solution to get
\begin{align*}
x^{(k)}(t'+t)&=x^{(k)}_0\text{ if }k\neq 2\\ \quad
x^{(2)}(t+t')&=(x_0^{(2)}-\max\{0,x^{(1)}-b_1\}t)+\max\{0,x_0^{(1)}-b_2\}t'.
\end{align*}
Then, if $\max\{0,x^{(1)}-b_1\}>0$ and $\max\{0,x^{(1)}-b_2\}>0$ and also $t=t'$, one has that
\begin{equation*}
x^{(k)}(2t)=x^{(k)}_0\text{ if }k\neq 2; \,
x^{(2)}(2t)=x_0^{(2)}-(b_2-b_1)t
\end{equation*}
which is analogous to a translation (see Figure \ref{Flie}).
\end{enumerate}

Moreover we have that since $w\perp a$ by construction, $\mathrm{div}_x(w\sigma(\langle a,x\rangle+b))=0$, which implies that the solution is pure transport on a half space, while the other half space remains invariant (see Figure \ref{Finv}). In other words, we have:

\begin{figure}
\includegraphics[scale=0.3]{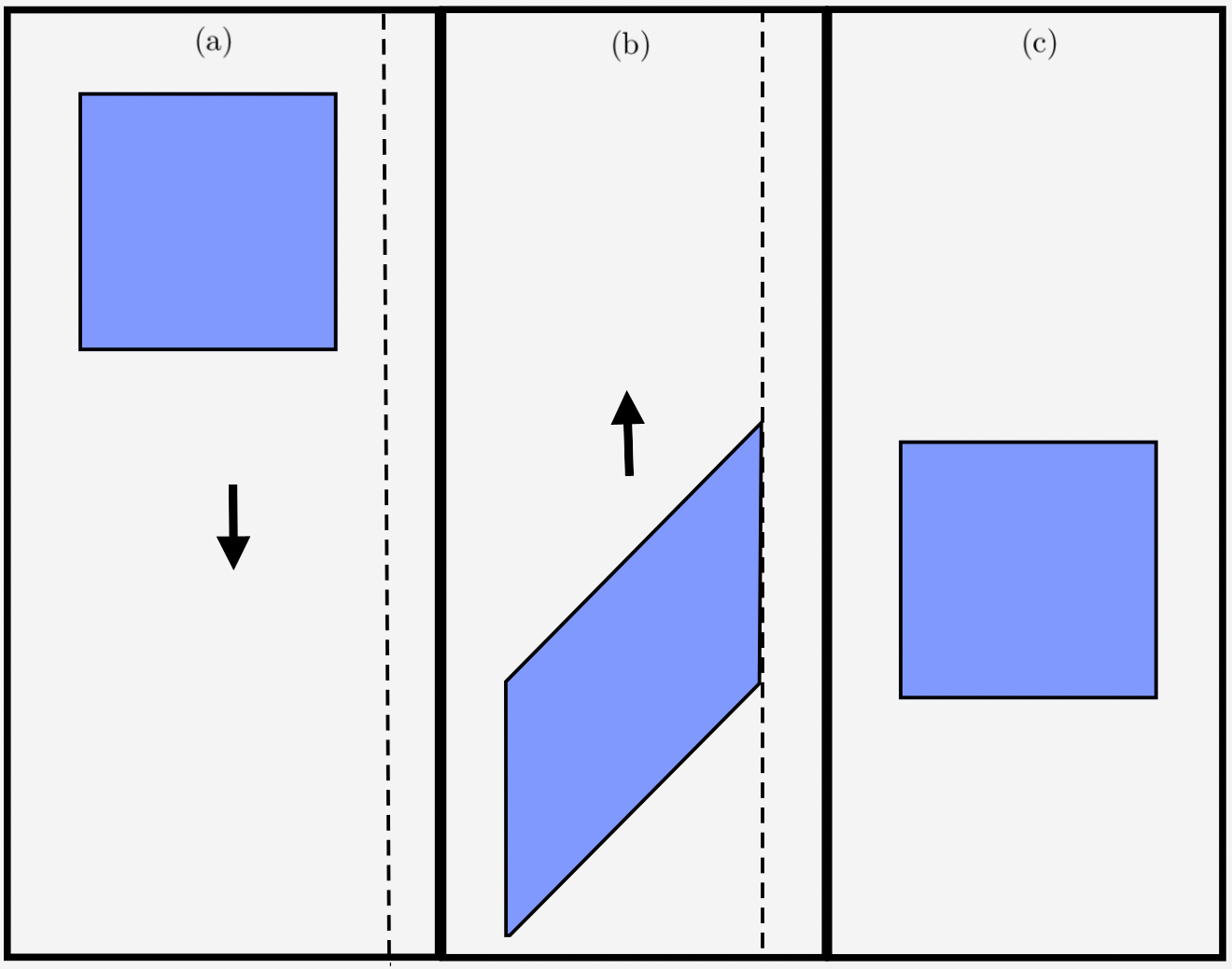}
\caption{Left (a): The support of the initial datum, a square. The arrow indicates the direction of the vector field and the dashed line  the hyperplane chosen. The vector field vanishes to the right of the hyperplane. The square is deformed into a parallelogram with the same area than the original square.  Middle (b): A hyperplane (in dashed lines) is chosen, on the boundary of the parallelogram, and a new vector field (indicated with the arrow) in the opposite direction is then chosen.  Right (c): The output of this two-step process is a square, with the same initial support but translated vertically downwards, parallel to the hyperplane chosen.}\label{Flie}
\end{figure}

\begin{figure}
\includegraphics[scale=0.3]{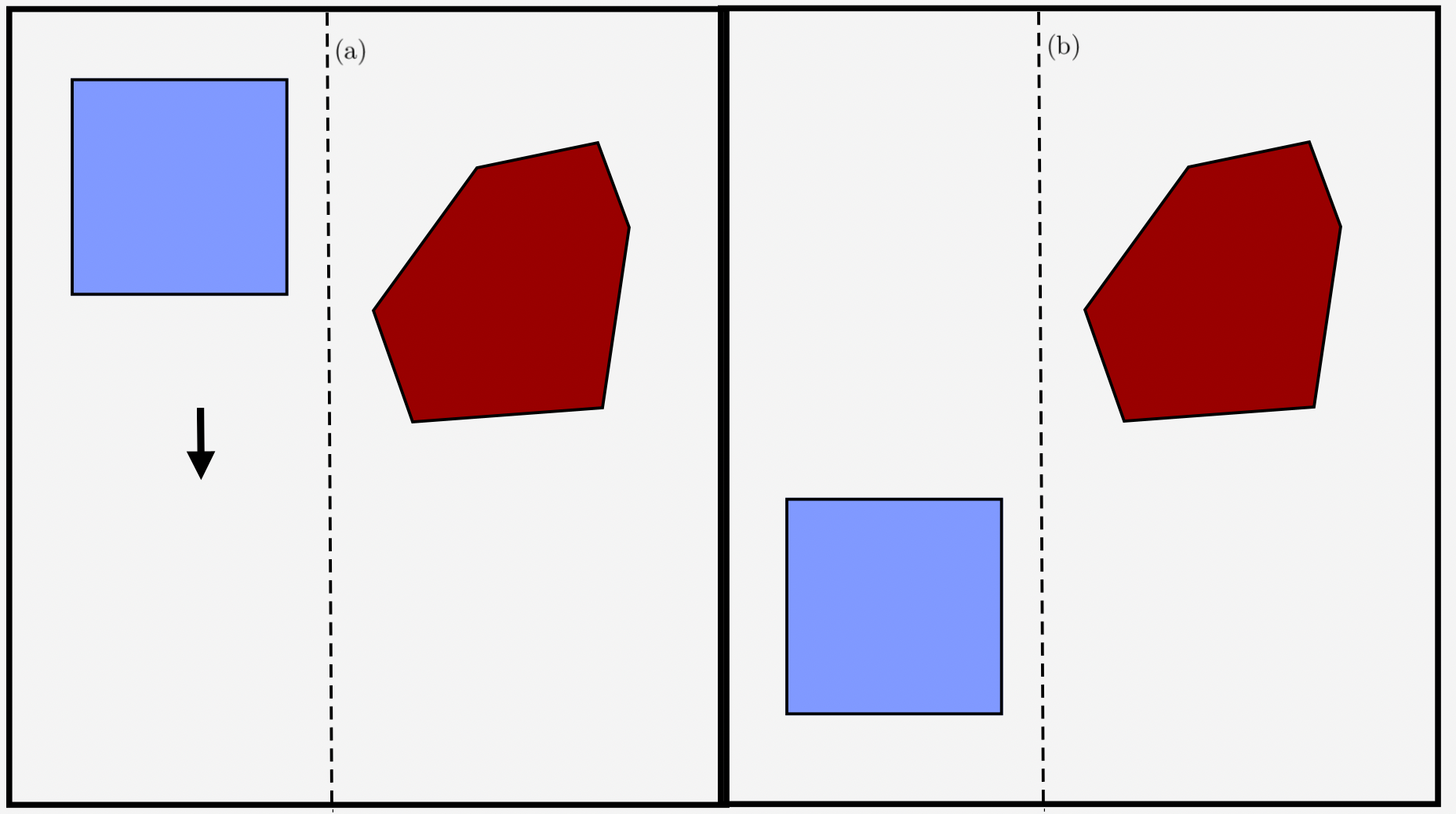}
\caption{Left (a): The initial condition with  support constituted by two separated bodies.  The dashed line indicates a hyperplane that separates both supports, and the arrow  the direction in which the field is applied. Right (b): Plot of the final state obtained after applying the two controls {\color{black} exposed in Figure \ref{Flie}}. The red set has not been deformed since the field vanishes on the half space in which it is contained. }\label{Finv}
\end{figure}

\begin{lemma}[Parallel translation with respect to a hyperplane]\label{Lpt}
Consider $d\geq 2$ and let us consider the following initial data
$$\rho_0= \rho_{0,1}+\rho_{0,2}$$
where
one has that 
$$\mathrm{dist}(P_{x^{(1)}}\mathrm{supp}(\rho_{0,1}),P_{x^{(1)}}\mathrm{supp}(\rho_{0,2}))>\epsilon$$
 for a certain $\epsilon>0$, where $P_{x^{(1)}}$ denotes the projection into the $x_0^{(k)}$ coordinate. 
 
 Then, for any $\kappa\in \mathbb{R}$ there exist piecewise constant controls $w, a\in BV((0,T);\mathbb{R}^d)$ and $b\in BV((0,T);\mathbb{R})$ such that the following is satisfied
\begin{equation}
\begin{cases}
\partial_t\rho+\mathrm{div}_x\big(w\sigma(\langle a,x\rangle +b)\rho\big)=0 \quad (x,t)\in\mathbb{R}^d\times (0,T)\\
\rho(0)=\rho_{0,1}+\rho_{0,2}\\
\rho(T)=\rho_{0,1}(\cdot+\kappa(0,1,0,...,0))+\rho_{0,2}
\end{cases}
\end{equation}
 $a$ being a  constant, and the number of discontinuities of $w$ and $b$ being $2$.
\end{lemma}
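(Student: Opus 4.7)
The plan is to make rigorous the two-step heuristic computation displayed immediately before the statement, checking carefully that (i) the vector field is divergence-free during each phase (so the density is preserved pointwise along characteristics) and (ii) the sign of the ReLU argument is fixed throughout each phase by the initial position, so $\mathrm{supp}(\rho_{0,1})$ undergoes a rigid translation while $\mathrm{supp}(\rho_{0,2})$ stays frozen. After swapping the two pieces if necessary I may assume $\sup P_{x^{(1)}}\mathrm{supp}(\rho_{0,2})+\epsilon\leq\inf P_{x^{(1)}}\mathrm{supp}(\rho_{0,1})$, pick two distinct numbers $b_1,b_2$ strictly inside the gap $(\sup P_{x^{(1)}}\mathrm{supp}(\rho_{0,2}),\inf P_{x^{(1)}}\mathrm{supp}(\rho_{0,1}))$, and, with $e_k$ denoting the standard basis of $\mathbb{R}^d$, set
$$
a(t)\equiv e_1, \qquad w(t)=-We_2\,\mathbb{1}_{(0,T/2)}(t)+We_2\,\mathbb{1}_{(T/2,T)}(t), \qquad b(t)=-b_1\,\mathbb{1}_{(0,T/2)}(t)-b_2\,\mathbb{1}_{(T/2,T)}(t),
$$
for a scalar $W\in\mathbb{R}$ to be chosen. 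These controls have exactly two discontinuities and $a$ is constant, matching the complexity claim. Since $\langle w(t),a(t)\rangle=0$ at every $t$, the divergence $\mathrm{div}_xV(\cdot,t)=\sigma'(\langle a,x\rangle+b)\langle w,a\rangle$ vanishes almost everywhere, so transport along characteristics preserves the density pointwise.

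Next I follow a characteristic $x(\cdot)$. Because $w$ has no $e_1$-component, the first coordinate is constant during each phase, so for a particle starting in $\mathrm{supp}(\rho_{0,1})$ the quantity $x^{(1)}-b_i$ is strictly positive throughout the corresponding phase ($i=1,2$), while for one starting in $\mathrm{supp}(\rho_{0,2})$ it is strictly negative. Thus the ReLU argument cannot change sign mid-phase. Integrating, particles of $\rho_{0,2}$ do not move at all, while particles of $\rho_{0,1}$ experience a displacement along $e_2$ equal to
$$
-W\bigl(x^{(1)}(0)-b_1\bigr)\tfrac{T}{2}+W\bigl(x^{(1)}(0)-b_2\bigr)\tfrac{T}{2}=W(b_1-b_2)\tfrac{T}{2},
$$
independently of $x^{(1)}(0)$. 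Hence the flow on $\mathrm{supp}(\rho_{0,1})$ is a pure translation in the $e_2$ direction, and choosing $W$ so that $W(b_1-b_2)T/2=-\kappa$ (which has a solution for every $\kappa\in\mathbb{R}$ since $b_1\neq b_2$) yields $\rho(T)=\rho_{0,1}(\cdot+\kappa e_2)+\rho_{0,2}$ as required.

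The only delicate point is the sign-preservation of the ReLU argument along each phase; this is guaranteed precisely by the orthogonality $w\perp a$, which freezes the first coordinate during each phase and so prevents a particle of $\mathrm{supp}(\rho_{0,1})$ from crossing the hyperplane $\{x^{(1)}=b_i\}$ and cutting the drift short, or conversely a particle of $\mathrm{supp}(\rho_{0,2})$ from becoming active. Everything else is mere bookkeeping with the characteristic ODE, and the two-step cancellation of the $x^{(1)}$-dependent shear is exactly the content of the bullet points preceding the statement.
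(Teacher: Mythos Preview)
Your proof is correct and follows exactly the approach of the paper: the two-phase construction with $a\equiv e_1$, $w=\mp W e_2$, $b=-b_1,-b_2$ in the gap, the observation that $w\perp a$ makes the field divergence-free and freezes $x^{(1)}$ so the ReLU sign is fixed, and the cancellation yielding a net $e_2$-translation independent of $x^{(1)}$ are precisely the computations displayed just before the lemma. The only cosmetic point is that ``swapping the two pieces'' is not quite the right phrasing since the lemma specifies which piece is translated; in the reversed ordering you would instead take $a\equiv -e_1$, but this changes nothing of substance.
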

\begin{remark}
It is worth noting that the role of the first and second coordinates, $x^{(1)}$ and $x^{(2)}$, can be interchanged with any other pair of coordinates.
\end{remark}

\section{Proof of neural transport control / neural $\epsilon$-coupling.}\label{S:proof} 

\subsection{Preliminaries}

Part of the proof's core is inspired by the following Lemma, which guarantees, by means of a standard $L^1$-contraction principle, that it suffices to control densities which are $L^1$-close to the original ones.

\begin{lemma}[Contraction in  $L^1$]\label{L1stab}
Assume $V\in L^\infty((0,T);\mathrm{Lip}(\mathbb{R}^d))$ and let us consider the problems
{\small
\begin{equation*}
\begin{cases}
\partial_t \rho+\mathrm{div}_x\left(V(x,t)\rho\right)=0\quad (x,t)\in \mathbb{R}^d\times(0,T)\\
\rho(0,\cdot)=\rho_0(\cdot)
\end{cases},
\quad
\begin{cases}
\partial_t \rho_\epsilon+\mathrm{div}_x\left(V(x,t)\rho_\epsilon\right)=0\quad (x,t)\in \mathbb{R}^d\times(0,T)\\
\rho(0.\cdot)=\rho_0+\epsilon(\cdot)
\end{cases}
\end{equation*}
}
with $\|\epsilon(\cdot)\|_{L^1(\mathbb{R}^d)}\leq \epsilon$. Then, for any $T>0$, one has
$$\|\rho_\epsilon(T)-\rho(T)\|_{L^1(\mathbb{R}^d)}\leq \epsilon.$$
\end{lemma}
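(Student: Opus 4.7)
The plan is to exploit linearity and the flow representation of solutions to the continuity equation with Lipschitz velocity field. Set $\delta(t,x) := \rho_\epsilon(t,x)-\rho(t,x)$. Since both equations share the same velocity field $V$ and the equation is linear in the unknown, $\delta$ solves
\begin{equation*}
\partial_t \delta + \mathrm{div}_x(V(x,t)\delta)=0, \qquad \delta(0,\cdot)=\epsilon(\cdot).
\end{equation*}
So it suffices to show that $\|\delta(T)\|_{L^1(\mathbb{R}^d)} = \|\epsilon\|_{L^1(\mathbb{R}^d)}$, which by hypothesis is bounded by $\epsilon$.

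Next, I would invoke the flow representation. Because $V\in L^\infty((0,T);\mathrm{Lip}(\mathbb{R}^d))$, the Cauchy-Lipschitz theorem (or, equivalently, the DiPerna-Lions theory cited earlier) produces a unique flow $\Phi_t:\mathbb{R}^d\to\mathbb{R}^d$ satisfying $\partial_t \Phi_t(y)=V(\Phi_t(y),t)$ with $\Phi_0(y)=y$; the map $\Phi_t$ is a bi-Lipschitz homeomorphism with Jacobian determinant $J_t(y):=\det D\Phi_t(y)$ satisfying $J_t(y)>0$ for a.e. $y$. The distributional solution of the continuity equation is then characterized as the pushforward $\delta(t,\cdot)=(\Phi_t)_{\#}\,\epsilon(\cdot)$, which, at the level of densities, reads $\delta(t,\Phi_t(y))\,J_t(y)=\epsilon(y)$.

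A straightforward change of variables $x=\Phi_t(y)$ now gives
\begin{equation*}
\|\delta(t)\|_{L^1(\mathbb{R}^d)} = \int_{\mathbb{R}^d}|\delta(t,x)|\,dx = \int_{\mathbb{R}^d} |\delta(t,\Phi_t(y))|\,J_t(y)\,dy = \int_{\mathbb{R}^d}|\epsilon(y)|\,dy,
\end{equation*}
valid for every $t\in[0,T]$. Evaluating at $t=T$ and using $\|\epsilon\|_{L^1(\mathbb{R}^d)}\leq \epsilon$ yields the claim.

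There is no genuine obstacle, since for a Lipschitz velocity field the $L^1$ norm of a solution of the continuity equation is exactly preserved along the flow; the result is in fact an equality, and the inequality in the statement is only a consequence of the bound on the initial perturbation. The only point to double-check is the admissibility of the change of variables, which is standard under the assumed regularity of $V$ (one could alternatively appeal directly to the well-posedness and stability statements in \cite{diperna1989ordinary,ambrosio2008transport}).
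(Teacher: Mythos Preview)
Your proof is correct. Both you and the paper begin by noting linearity and reducing to the equation for the difference $\delta=\rho_\epsilon-\rho$ with initial datum $\epsilon(\cdot)$, but the arguments then diverge slightly. You invoke the Lagrangian flow $\Phi_t$ and a change of variables to obtain directly that $\|\delta(t)\|_{L^1}=\|\epsilon(\cdot)\|_{L^1}$. The paper instead splits $\delta=\eta_++\eta_-$ into its positive and negative parts, observes that each solves the same continuity equation (with sign-definite initial data), and uses only the preservation of total mass $\int\eta_\pm(t)=\int\eta_\pm(0)$ to conclude. Your route is a touch more direct and yields the equality in one line; the paper's route is marginally more elementary in that it only needs mass conservation and sign preservation rather than the explicit pushforward formula and Jacobian. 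Both are standard and equally valid under the stated regularity on $V$.
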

\begin{proof}
Let $\eta=\rho_\epsilon-\rho$. By linearity, $\eta$ satisfies
\begin{equation*}
\begin{cases}
\partial_t \eta+\mathrm{div}_x\left(V(x,t)\eta\right)=0\quad (x,t)\in \mathbb{R}^d\times(0,T)\\
\eta(0,\cdot)=\epsilon(\cdot)
\end{cases}
\end{equation*}
and $\eta=\eta_++\eta_-$ where
{\small
\begin{equation*}
\begin{cases}
\partial_t \eta_-+\mathrm{div}_x\left(V(x,t)\eta_-\right)=0\quad (x,t)\in \mathbb{R}^d\times(0,T)\\
\eta_-(0,\cdot)=\min\{\epsilon(\cdot),0\},\quad -\epsilon\leq \int\eta_-(0)\leq 0
\end{cases},\quad
\begin{cases}
\partial_t \eta_++\mathrm{div}_x\left(V(x,t)\eta\right)=0\quad (x,t)\in \mathbb{R}^d\times(0,T)\\
\eta_+(0,\cdot)=\max\{\epsilon(\cdot),0\},\quad 0\leq \int\eta_+(0)\leq \epsilon.
\end{cases}
\end{equation*}
}
By the preservation of mass property of the equation one has 
$$\|\eta(T)\|_{L^1(\mathbb{R}^d)}=\int_{\mathbb{R}^d} \eta_+(T)dx-\int_{\mathbb{R}^d} \eta_-(T)dx=\int_{\mathbb{R}^d} \eta_+(0)dx-\int_{\mathbb{R}^d} \eta_-(0)dx=\|\epsilon(\cdot)\|_{L^1(\mathbb{R}^d)}\leq\epsilon.$$

\end{proof}

Having this property in mind we are ready to proceed with  the approximate controllability proof.

{\color{black} The proof will be developed by induction on the ascending dimension $d$.  This is the reason why we present first the $1-d$ case, to later proceed with the induction argument.}

\subsection{The $1-d$ case}
This section is devoted to prove our main theorem in $1-d$.

First of all, observe that, thanks to Lemma \ref{1dtrans}, with suitable controls, a uniform distribution of an interval can be transported in an arbitrary manner. Thus, using the time-reversibility of the equation and the $L^1$-contraction property in Lemma \ref{L1stab},  it is sufficient to show that we can control from a particular uniform distribution of an interval to an arbitrary target.
\begin{enumerate}

\item \textbf{Approximation of the target.}
Given $\epsilon>0$, we choose $h>0$ small enough and $0<\delta<h$ so that the target $\rho_T$ is approximated  by a particular Riemann  or $P_0$ finite element approximation
$$\rho_T^h=\sum_{k=1}^{K} \rho_{T,k} \mathbb{1}_{(x_{k-1},x_{k}-\delta)}$$
with $x_{k+1}=x_k+h$,  in a way that
\begin{equation}\label{appr}
\|\rho_T-\rho_T^h\|_{L^1(\mathbb{R})}<\epsilon,\quad \int_{\mathbb{R}}\rho_T^hdx+\frac{\epsilon}{2}\leq \int_{\mathbb{R}}\rho_Tdx
\end{equation}
Obviously $K$ depends on $h$, $\delta$ and $\epsilon$.
\begin{figure}
\includegraphics[scale=0.13]{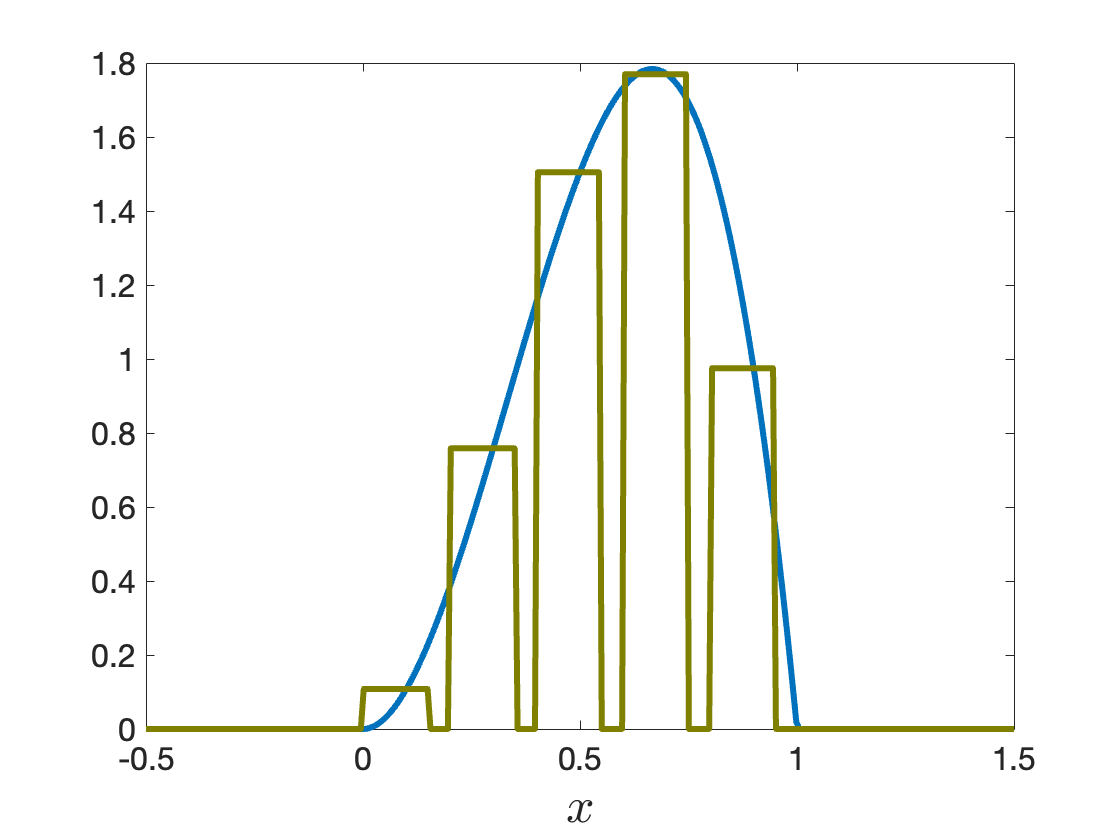}
\includegraphics[scale=0.13]{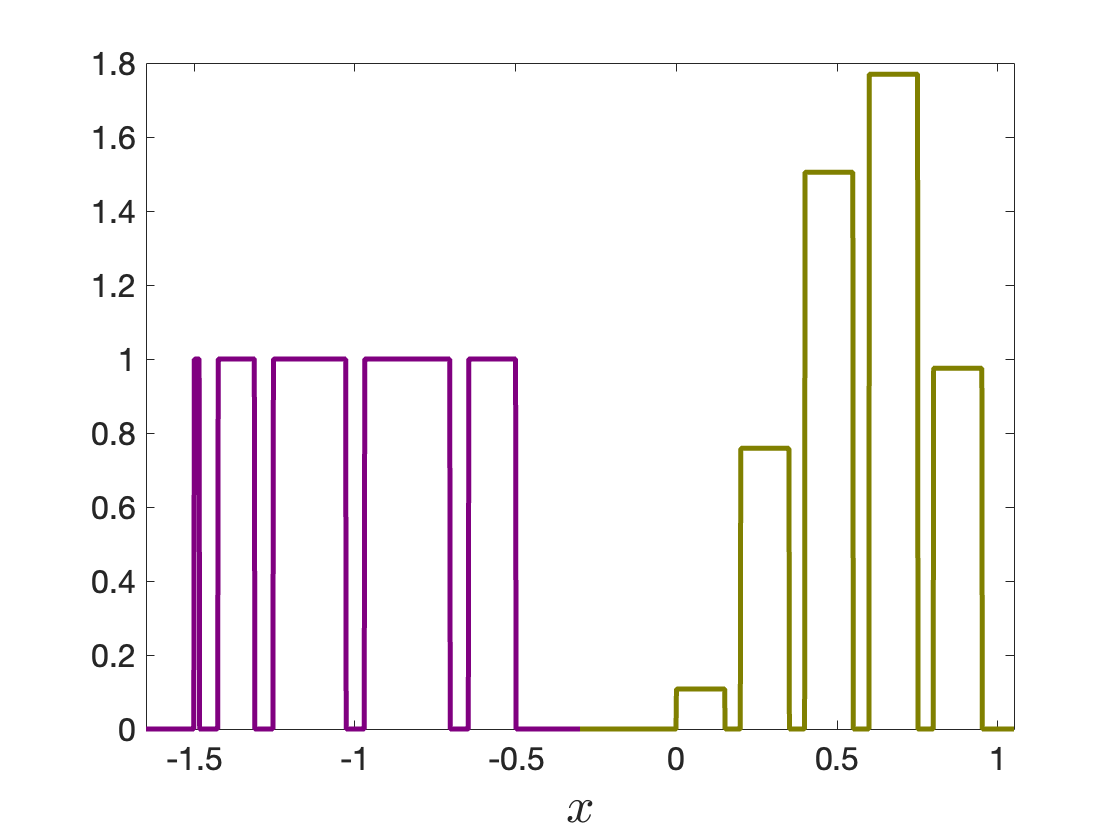}
\includegraphics[scale=0.13]{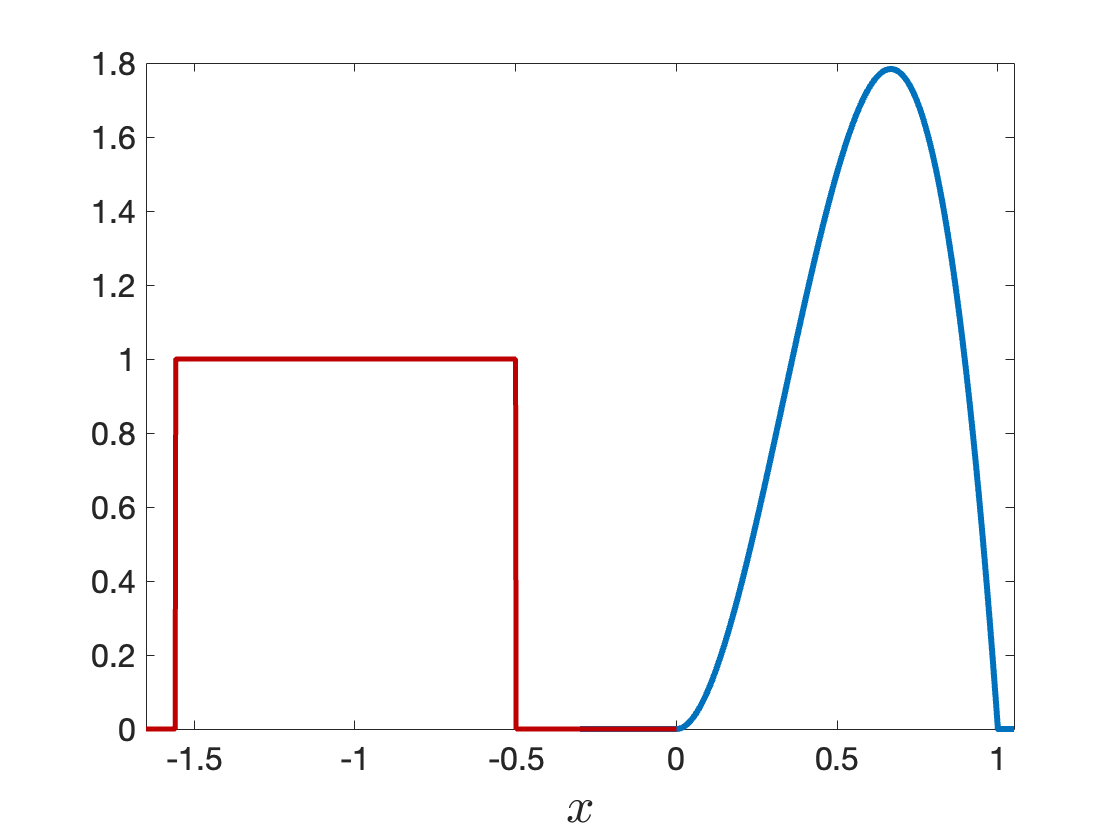}
\caption{Scheme of the $1-d$ proof. First we approximate the target and the initial data so that each interval has the same mass and then we control element by element. By Lemma \ref{L1stab} we obtain the approximate controllability of the initial data to the target.}
\end{figure}

\item \textbf{Approximation and choice of the initial data}.
Since we can generate translations with appropriate choices of controls, it suffices to find a particular uniform distribution $\rho_0$ and control it to the target approximately. Thus, we choose $\rho_0$ of the form $\rho_0(x)=\mathbb{1}_{(y_0,y_0+1)}(x)$,  with $y_0$ satisfying:
\begin{equation}\label{sepsup}
\sup \mathrm{supp}(\rho_0)<\inf  \mathrm{supp}(\rho_T^h).
\end{equation}
In other words, we choose $y_0$ such that $y_0+1< \inf  \mathrm{supp}(\rho_T^h)$. This choice will simplify the control arguments later on.

Recall that the support of the approximate target $\rho_T^h$ has been constructed so that it has $K$ connected components. For this reason, we want an approximation of the initial datum $\rho_0$ with the same mass as the target $\rho_T^h$, constituted of $K$ connected components as well, so that the mass of each of them coincides with that of the components of the target in an ordered manner.

Let us consider 
\begin{equation}\label{tau}
\tau:=1-\int_{\mathbb{R}}\rho_T^hdx
\end{equation}
By construction, $\epsilon/2\leq\tau\leq\epsilon$.

Consider the distribution function associated to $\rho_0$
$$F_0(x)=\int_{-\infty}^x \rho_0(x)dx$$
and the distribution function associated to $\rho_T^h$
$$F_T^h(x)=\int_{-\infty}^x \rho_T^h(x)dx.$$

Now the objective is to find
real numbers $y_{k}$ and $c_k$ such that $\rho_0^h$, defined as
$$\rho_0^h=\sum_{k=0}^{K-2} \mathbb{1}_{(y_k,y_{k+1}-c_k)}+ \mathbb{1}_{(y_{K-1},y_{K})},$$
satisfies $\|\rho_0^h-\rho_0\|_{L^1(\mathbb{R})}\leq \epsilon$ and so that the mass of the $k$-th connected component of the support of $\rho_0^h$ and of the support of $\rho_T^h$ is the same.

Let us proceed with the approximation. Set $y_K=y_0+1$ and let $y_{K-1}$ be defined as the unique solution to
$$1-F_0(y_{K-1})=1-F_T^h(x_{K-1}). $$
The existence and uniqueness of the solutions follows from the fact that $F_0$ and $F_T^h$ are monotonic, since $\rho_T^h$ and $\rho_0$ are positively defined, and $1-\tau=F_T^h(+\infty)\leq F_0(+\infty)=1$.

To define the rest of the points, it would be natural to set $c_k=0$, adjusting the mass at each interval by solving, for each $k=1,...,K-2$,
$$F_0(y_{k+1})-F(y_{k})=F_T^h(x_{k+1})-F_T^h(x_{k}).$$
But, since the vector field $V=w\sigma(ax+b)$ is Lipschitz, the number of connected components of the original support is invariant. Therefore, to achieve the exact control, we need to create some empty space in between to have exactly $K$ connected components. When doing this, we use the particular construction requirement that $\tau\geq \epsilon/2$, to create a small gap of mass equal to
$$F_0(y_k)-F_0(y_{k}-c_k)=\frac{\tau}{K-1},\quad k=1,...,K-1.$$
However, since $\rho_0$ is a uniform distribution, all these gaps  are the same and can be found explicitly, $c_k=\tau/(K-1)$. Therefore, the equation to solve
 recursively for $k=K-2, K-3,...,1$, is
$$F_0\left(y_{k+1}-\frac{\tau}{K-1}\right)-F_0(y_{k})=F_T^h(x_{k+1})-F_T^h(x_{k}).$$

Now, with the points $\{y_k\}_{k=1}^K$ as above, we consider $\rho^h_0$ as follows:
\begin{equation}\label{ID}
\rho_0^h(x)=\sum_{k=0}^{K-2}  \mathbb{1}_{(y_k,y_{k+1}-\tau/(K-1))}(x)+\mathbb{1}_{(y_{K-1},y_{K})}(x).
\end{equation}

Taking  \eqref{tau} into account, it is easy to see that 
$$\|\rho_0^h-\rho_0\|_{L^1(\mathbb{R})}=\tau\leq \epsilon.$$

\item \textbf{Simultaneous control: Induction.} We proceed by induction on the number $K$ of elements in the approximation of $\rho_T^h$, to show the exact controllability between the approximate $\rho_0^h$ and $\rho_T^h$.  To ease notation, making a slight abuse of notation,  we denote by $\tau$ the quantity $\tau/(K-1)$ in \eqref{ID}. From the construction done in steps (1) and (2) we have that:
\begin{align*}
k=1,...K-1,\quad 
&\begin{cases}
\int_{y_{k-1}}^{y_{k}-\tau}\rho_0^h dx=\int_{x_{k-1}}^{x_{k}-\delta} \rho_T^h dx\\
\rho_0^h|_{(y_{k-1},y_{k}-\tau)}=1, \quad \rho_T^h|_{(x_{k-1},x_{k}-\delta)}=\rho_{T,k}
\end{cases}\\
 &\begin{cases}
\int_{y_{K-1}}^{y_{K}}\rho_0^h dx=\int_{x_{K-1}}^{x_{K}-\delta} \rho_T^h dx\\
\rho_0^h|_{(y_{K-1},y_{K})}=1, \quad \rho_T^h|_{(x_{K-1},x_{K}-\delta)}=\rho_{T,K}
\end{cases}
\end{align*}
This constitutes a simultaneous control problem for which we would like to find controls $w,a,b\in BV(0,T)$ such that the following is satisfied
\begin{equation}\label{SimCon}
k=1,...,K,\quad\begin{cases}
\partial_t \rho^{(k)}+\partial_x\left(\big(w\sigma(ax+b)\big)\rho^{(k)}\right)=0\quad (x,t)\in \mathbb{R}\times(0,T)\\
\rho^{(k)}(0)=\mathbb{1}_{(y_{k-1},y_{k}-\tau\mathbb{1}_{k\neq K})}(x)\\
\rho^{(k)}(T)=\rho_{T,k}\mathbb{1}_{(x_{k-1},x_{k}-\delta)}(x).
\end{cases}
\end{equation}
Note that, then, $\rho=\sum_{k=1}^K \rho^{(k)}$ satisfies
\begin{equation}\label{SimCon}
\begin{cases}
\partial_t \rho+\partial_x\left(\big(w\sigma(ax+b)\big)\rho\right)=0\quad (x,t)\in \mathbb{R}\times(0,T)\\
\rho(0)=\rho_0^h\\
\rho(T)=\rho_{T}^h.
\end{cases}
\end{equation}
In particular, thanks to Lemma \ref{L1stab} one would have that the solution of
\begin{equation*}
\begin{cases}
\partial_t \rho+\partial_x\left(\big(w\sigma(ax+b)\big)\rho\right)=0\quad (x,t)\in \mathbb{R}\times(0,T)\\
\rho(0)=\rho_0
\end{cases}
\end{equation*}
satisfies $\|\rho(T)-\rho_T\|_{L^1(\mathbb{R})}\leq 2\epsilon$.

\textcolor{black}{We proceed by a recursive argument as in the proposition below, which suffices to prove the theorem. }
\begin{prop}\label{proptoquote}
Assume that, for every $T>0$, there exist piecewise constant controls $w,a,b\in BV(0,T)$ such that the following simultaneous control property holds
\begin{equation}\label{hypind}
\begin{cases}
\partial_t \rho^{(k)}+\partial_x\big((w(t)\sigma(a(t)x+b(t))\rho^{(k)}\big)=0\quad (x,t)\in \mathbb{R}\times(0,T),\quad &k=1,...,K\\
\rho^{(k)}(0)=\mathbb{1}_{(y_{k-1},y_{k}-\tau\mathbb{1}_{k\neq K})}(x) \quad &k=1,...,K\\
\rho^{(k)}(T)=\rho_{T,k}\mathbb{1}_{(x_{k-1},x_{k}-\delta)}(x) \quad &k=K-m,...,K\\
\rho^{(k)}(T)=\mathbb{1}_{(y_{k-1},y_{k}-\tau\mathbb{1}_{k\neq K})}(x) \quad &k=1,...,K-m-1\\
\end{cases}
\end{equation}
and $K-m-1$ initial data remain invariant. 

Then, for every $T>0$, there exist new  piecewise constant controls $w,a,b\in BV(0,T)$ such that the $K-m-1$-th equation is also controlled. 
 More precisely, the following is satisfied
\begin{equation*}
\begin{cases}
\partial_t \rho^{(k)}+\partial_x\big((w(t)\sigma(a(t)x+b(t))\rho^{(k)}\big)=0\quad (x,t)\in \mathbb{R}\times(0,T),\quad &k=1,...,K\\
\rho^{(k)}(0)=\mathbb{1}_{(y_{k-1},y_{k}-\tau\mathbb{1}_{k\neq K})}(x) \quad &k=1,...,K\\
\rho^{(k)}(T)=\rho_{T,k}\mathbb{1}_{(x_{k-1},x_{k}-\delta)}(x) \quad &k=K-m-1,...,K\\
\rho^{(k)}(T)=\mathbb{1}_{(y_{k-1},y_{k}-\tau\mathbb{1}_{k\neq K})}(x) \quad &k=1,...,K-m-2.
\end{cases}
\end{equation*}
\end{prop}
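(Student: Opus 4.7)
The plan is to construct the controls by concatenation. I split the time interval $(0,T)$ into $(0, T/2)$ and $(T/2, T)$: on the first sub-interval I apply the controls given by the inductive hypothesis, and on the second I design additional explicit piecewise constant controls. At $t = T/2$ the configuration is that the right group $\rho^{(k)}$, $k = K-m, \ldots, K$, sits at the target intervals $(x_{k-1}, x_k - \delta)$ with density $\rho_{T,k}$, while the left group $\rho^{(k)}$, $k = 1, \ldots, K-m-1$, remains at its initial intervals $(y_{k-1}, y_k - \tau \mathbb{1}_{k \neq K})$ with density $1$. By \eqref{sepsup}, the two groups are separated by an open gap; in particular the $(K-m-1)$-th component at $(y_{K-m-2}, y_{K-m-1} - \tau)$ is separated from the leftmost right-group interval $(x_{K-m-1}, x_{K-m}-\delta)$.

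On $(T/2, T)$ I move the $(K-m-1)$-th component to $(x_{K-m-2}, x_{K-m-1}-\delta)$ with density $\rho_{T, K-m-1}$ while preserving every other component. All components $k \leq K-m-2$ are automatically preserved provided every threshold used on $(T/2, T)$ is chosen strictly above $y_{K-m-2}-\tau$, since the ReLU vector field $w\sigma(ax+b)$ vanishes on $\{x : ax+b \leq 0\}$ and these components sit strictly to the left of that threshold. It remains to orchestrate the $(K-m-1)$-th component and the right group together; I carry this out in three successive steps built from Lemmas \ref{Ldilcomp} and \ref{1dtrans}: (i) with threshold in the gap between the $(K-m-1)$-th at its initial position and the right group at targets, translate the right group alone by a signed buffer $\alpha$ via Lemma \ref{1dtrans}; (ii) with threshold $b_2$ just to the left of the $(K-m-1)$-th, apply a compression-dilation of factor $\lambda = 1/\rho_{T, K-m-1}$ (forced by mass conservation and width matching) followed by a translation, sending $(y_{K-m-2}, y_{K-m-1}-\tau)$ exactly onto $(x_{K-m-2}, x_{K-m-1}-\delta)$; the same affine composition is applied to the parked right group; (iii) with threshold $b_3$ in the new gap between the $(K-m-1)$-th (now at target) and the transformed right group, apply an inverse compression-dilation of factor $1/\lambda$ plus translation that restores the right group to targets while leaving the $(K-m-1)$-th unchanged.

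The main obstacle is the consistency of step (iii). Writing the composition of the affine maps of (ii) and (iii) on the right group and requiring it to invert the step-(i) translation $x \mapsto x + \alpha$ forces, on matching linear parts, the step-(iii) scaling factor to be $1/\lambda$, and on matching constant parts yields a linear relation among $b_3$, $b_2$, and the two translation amounts. The remaining geometric constraint is that $b_3$ must lie strictly between $x_{K-m-1}-\delta$ and the leftmost endpoint of the transformed right group; a direct calculation shows this gives an explicit nonempty admissible range for $|\alpha|$, with the sign of $\alpha$ to be chosen depending on whether $\lambda \lessgtr 1$, and the resulting $|\alpha|$ is compatible with the gap-width bound in Lemma \ref{1dtrans}. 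This completes the proof of the proposition.
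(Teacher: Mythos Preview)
Your argument is correct, but it follows a genuinely different route from the paper's. Both proofs first spend a sub-interval applying the induction hypothesis; the divergence is in how the $(K-m-1)$-th piece is then placed while protecting the others.

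The paper flips the orientation: in the scaling step it takes $a=-1$, choosing the threshold between $y_{K-m-1}-\tau$ and $x_{K-m-2}$, so that the scaling acts on the half-line to the \emph{left}. This leaves the already-placed right group $k\ge K-m$ untouched during scaling; the price is that the left group $k\le K-m-2$ is temporarily deformed, and a final step with the same threshold and the opposite sign of $w$ restores it exactly. Only a pure translation of the right group then needs to be undone (step (c)).

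You instead keep $a=1$ throughout, so every threshold lies to the right of the left group, which is therefore preserved automatically. The cost is that your scaling in step (ii) perturbs the right group as well, so step (iii) must undo an affine map (scaling \emph{and} translation) rather than a pure translation. This works because the same factor $1/\lambda$ restores all right-group widths simultaneously, regardless of their individual densities $\rho_{T,k}$. Your buffer translation $\alpha$ in step (i) is harmless but in fact unnecessary: with $\alpha=0$ the gap needed for $b_3$ is already positive (it equals $\lambda(x_{K-m-1}-(y_{K-m-1}-\tau))>0$ by \eqref{sepsup}), and the final translation $s'$ in step (iii) satisfies $s'+\text{gap}'=\delta>0$, so Lemma~\ref{1dtrans} applies directly.

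In short: the paper temporarily disturbs the \emph{left} group (uniform density, trivially reversed), you temporarily disturb the \emph{right} group (varying densities, reversed by an affine inverse). Both are valid; the paper's version has slightly less bookkeeping.
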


\begin{proof} (of Proposition \ref{proptoquote}).

\begin{itemize}
\item $m=1$ 

We proceed as follows; in the time interval $(0,T/2)$ we apply Lemma \ref{Ldilcomp} to compress/dilate, transforming the $K$-th state into
$$\rho^{(K)}(0)=\mathbb{1}_{(y_{K-1},y_K)} \longrightarrow \rho^{(K)}(T/2)=\rho_{K,T}\mathbb{1}_{(y_{K-1},y_{K-1}+x_{K}-\delta-x_{K-1})}$$
whilst $\rho^{(k)}(T/2)=\rho^{(k)}(0)$ for all $k\neq K$, i.e. without altering the states of equations $k=1...,K-1$,  in the interval $(T/2,T)$ we apply Lemma \ref{1dtrans} to generate a translation so that
$$\rho^{(K)}(T/2)=\rho_{K,T}\mathbb{1}_{(y_{K-1},y_{K-1}+x_{K}-\delta-x_{K-1})}\longrightarrow \rho^{(K)}(T)=\rho_{K,T}\mathbb{1}_{(x_{K-1},x_{K}-\delta)}$$
without altering the states of equations $k=1,...,K-1$.

\item $m\implies m+1$

\begin{enumerate}
\item   In the interval $(0,T/4)$, we apply the hypothesis of induction and control exactly the equations $K,K-1,...,K-m$ while  $\rho^{(k)}(T/4)=\rho^{(k)}(0)$ for $k=1,...,K-m-1$.
\item  In the interval $(T/4,T/2)$, we apply Lemma \ref{Ldilcomp} to compress/dilate the $K-m-1$ equation by leaving invariant the equations $K-m,...,K$.  One can always do that by choosing the control $b$ to take a constant value in between $y_{K-m-1}-\tau$ and $x_{K-m-1}$ and $a=-1$. Recall that by \eqref{sepsup}, for every $k$, $y_{k}\leq x_k$ and also $y_k\leq y_{k+1}-\tau$. By doing so, the states of equations $1,...,K-m-2$ have been also altered, we will take care of this in the last step.
\item   In the interval $(T/2,3T/4)$,  control the $K-m-1$ equation exactly to its target by inducing a translation using Lemma \ref{1dtrans}. With that, the states of equations $K-m,...,K$ have been also translated. However, we can revert the translations on the states of equations $k=K-m,...,K$ without affecting the states of equations $k=1,...,K-m-1$   by inducing a translation back with Lemma \ref{1dtrans}, choosing controls $b$ taking values in $(x_{K-m-1}-\delta,x_{K-m-1})$ and $a=1$.
 The whole procedure is illustrated in Figure \ref{Fig:1dinduc}.
\item In the interval $(3T/4,T)$, we apply Lemma \ref{Ldilcomp} as in step $(a)$, but changing the sign of $w$. This action will revert the states of equations $k=1,...,K-m-2$ to its original position, without modifying the states of equations $k=K-m-1,...,K$ that are already exactly controlled.
\begin{figure}
\hspace{-1cm}
\includegraphics[scale=0.11]{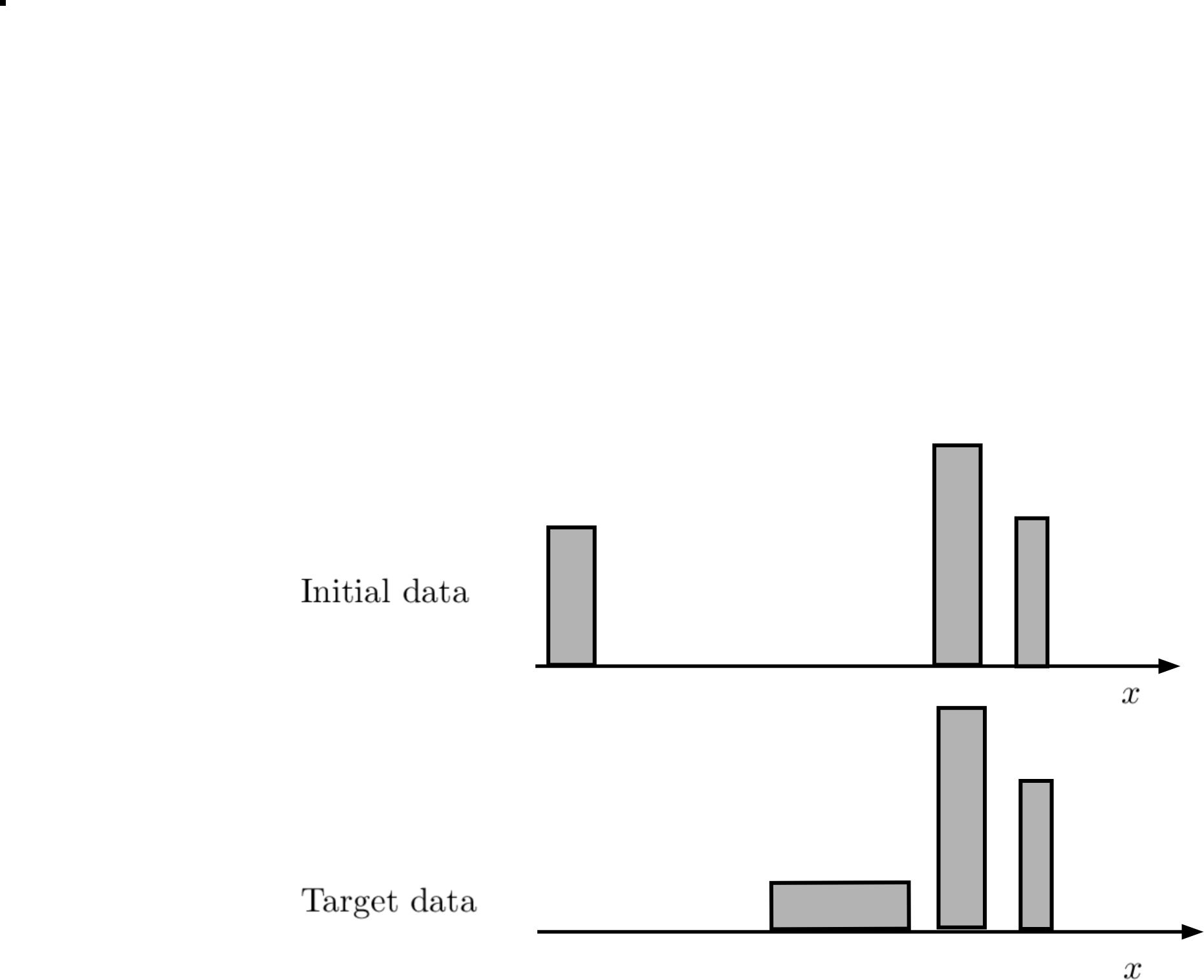}
\includegraphics[scale=0.11]{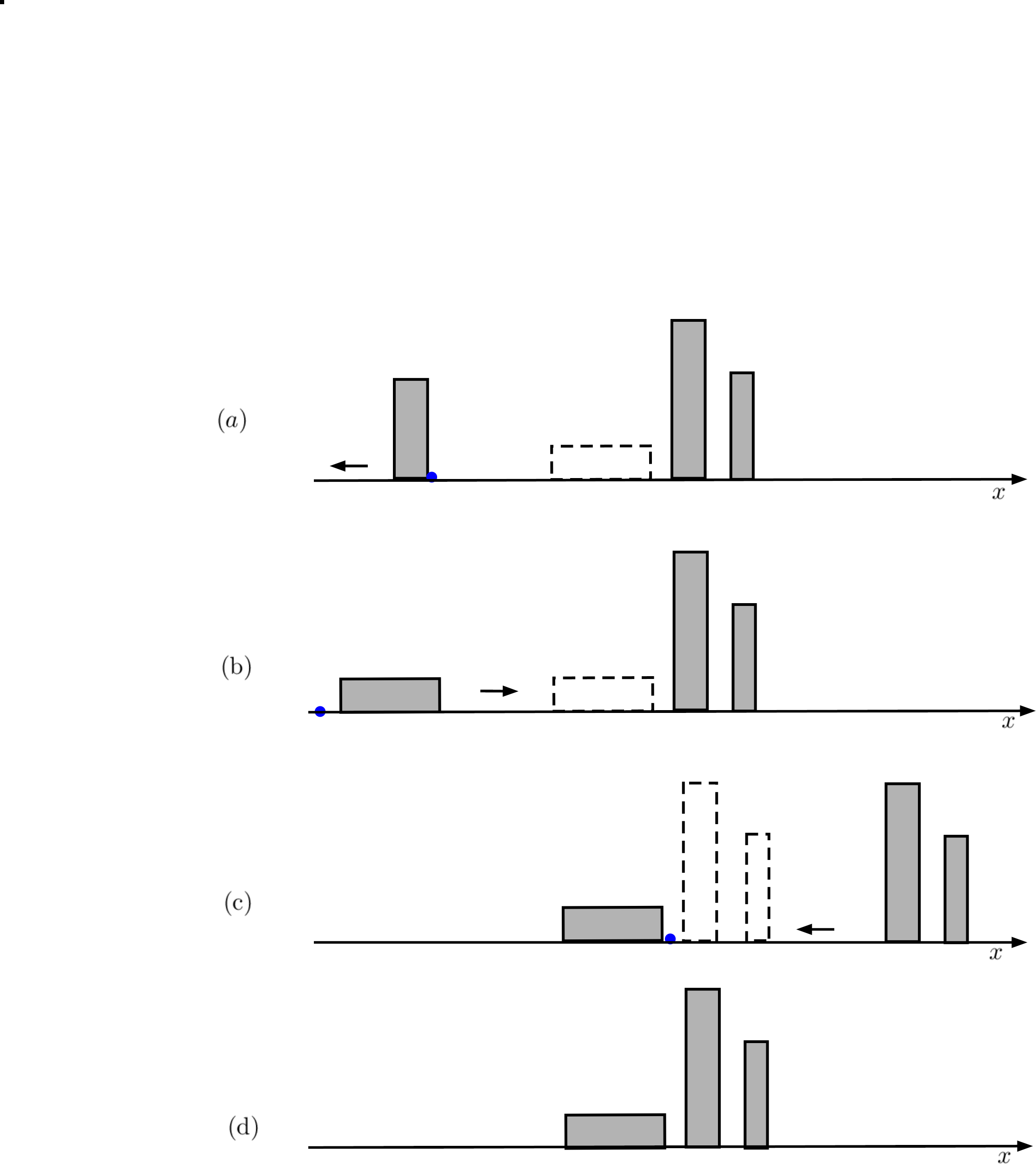}
\caption{Representation of the $1-d$ induction step. In gray, the subgraph of the current density. Left: Initial data in the induction and target data for the inductive control procedure. Right: Iterative control procedure. The dashed lines indicate the difference between the current state and the target motivating each of the control motions. Each rectangle corresponds to a different component, in this case $K=3$ (a) The component in the left (component 1) is not allocated to its target. First one chooses the blue point (hyperplane) to generate a dilation. (b) A translation is generated to control the component to its target location. (c) The previous step perturbed the other components  $2,3,...,K$. However, we can induce again  a flow that leaves invariant the component that is well-controlled and reverses the previous translation.}\label{Fig:1dinduc}
\end{figure}

\end{enumerate}
\end{itemize}
This completes the proof of the Proposition \ref{proptoquote}.
\end{proof}

\end{enumerate}

\begin{remark}
As explained before, we can employ the reversibility of the equation to control from an arbitrary density $\rho_1$ to another $\rho_T$. It will be enough to choose an intermiediate uniform density $\rho_0$ such that $\sup \mathrm{supp}(\rho_0)<\min\{\inf \mathrm{supp}(\rho_1^h),\inf \mathrm{supp}(\rho_T^h)\}$ where $\rho_1^h$ is an approximation of $\rho_1$ also fulfilling \eqref{appr}.
\end{remark}

In this way the proof of the main Theorem is complete in $1-d$.

\subsection{The multidimensional case}

{\color{black}
The first step of the proof in the multidimensional case consists on finding approximations of the target and initial data. These approximations will be built in a different way than in the $1-d$ case. The proof uses induction on the dimension and therefore also employs the $1-d$ approximation of the previous Theorem.
To argue by induction on the dimension, the key point is to rearrange the mass of the initial data and the target as Figure \ref{dscheme} shows, so that the $d-1$ dimensional controllability result can be applied. 

Let us proceed to the proof of Theorem 1 in several steps.
}

\noindent\textbf{Step 1: Approximate target and initial data.}  Lemma \ref{L1stab} allows us to build suitable initial and target data for which the construction  of the control will be substantially easier. 
\begin{enumerate}
\item Since $\rho_0$ and $\rho_T$ are probability densities, for every $\epsilon>0$ there exists $R_\epsilon>0$ such that when considering the hypercube $\mathcal{R}_\epsilon=[-R_\epsilon,R_\epsilon]^d$, we have
$$\|\rho_0\mathbb{1}_{\mathcal{R}_\epsilon}-\rho_0\|_{L^1(\mathbb{R}^d)}<\epsilon,\quad\|\rho_0\mathbb{1}_{\mathcal{R}_\epsilon}-\rho_0\|_{L^1(\mathbb{R}^d)}<\epsilon.$$
\item Now we consider a meshing of $\mathcal{R}_\epsilon$ by hyperplanes:
\begin{equation}\label{Ehyp}
H_{kl}=\{x\in \mathbb{R}^d:\quad x^{(k)}=c_{k,l}\},\quad |c_{k,l}-c_{k,l+1}|=h,\quad -R_\epsilon\leq c_{kl}\leq R_\epsilon.
\end{equation}
This implies that the number of hypercubes generated is bounded by
$N_{\epsilon,h}\leq \left\lceil\frac{2R_\epsilon}{h}\right\rceil^d.$
\item For every $\epsilon>0$, we consider Riemann approximations of $\rho_0$ and $\rho_T$ on the mesh by choosing $h_\epsilon>0$ small enough so that 
$$\|\rho_0\mathbb{1}_{\mathcal{R}_\epsilon}-\rho_0^h\|_{L^1(\mathbb{R}^d)}<\epsilon,\quad\|\rho_T\mathbb{1}_{\mathcal{R}_\epsilon}-\rho_T^h\|_{L^1(\mathbb{R}^d)}<\epsilon $$
where 
$$\rho_0^h(x)=\sum_{j\in h\mathbb{Z}^d\cap \mathcal{R}_\epsilon}m_{j,0}\mathbb{1}_{\Box_{j+h/2(1,1,...,1),h}}(x),	\quad \rho_T^h(x)=\sum_{j\in h\mathbb{Z}^d\cap \mathcal{R}_\epsilon}m_{j,T}\mathbb{1}_{\Box_{j+h/2(1,1,...,1),h}}(x)$$
with the notation
$$\Box_{c,r}=\left\{x\in \mathbb{R}^{d}:\quad \|x-c\|_{\infty}\leq r/2\right\}.$$
\item Consider now strips of width $\delta>0$ around the hyperplanes. More precisely (see Figure \ref{Fig:strips} for a representation):
$$\mathcal{H}_{kl}=\left\{x\in\mathbb{R}^d:\quad |x^{(k)}-c_{kl}|<\frac{\delta}{2}\right\}.$$
 The measure of $\mathcal{H}_{kl}$ is
$|\mathcal{H}_{kl}|=2^{d-1}\delta R_\epsilon^{d-1}.$
Considering now the union of all the strips
\begin{equation}\label{strips}
\Omega_\delta:=\bigcup_{1\leq k\leq d,1\leq l\leq\lceil2R_\epsilon/h\rceil}\mathcal{H}_{kl}.
\end{equation}
{\color{black}we have
$$|\Omega_\delta|\leq |\mathcal{H}_{kl}|d\left\lceil \frac{2R_\epsilon}{h}\right\rceil .$$
 Therefore for $\epsilon>0$ and $h>0$ fixed the measure of $\Omega_\delta$ goes to $0$ as $\delta\to0$.
}

\begin{figure}
\begin{center}
\includegraphics[scale=0.2]{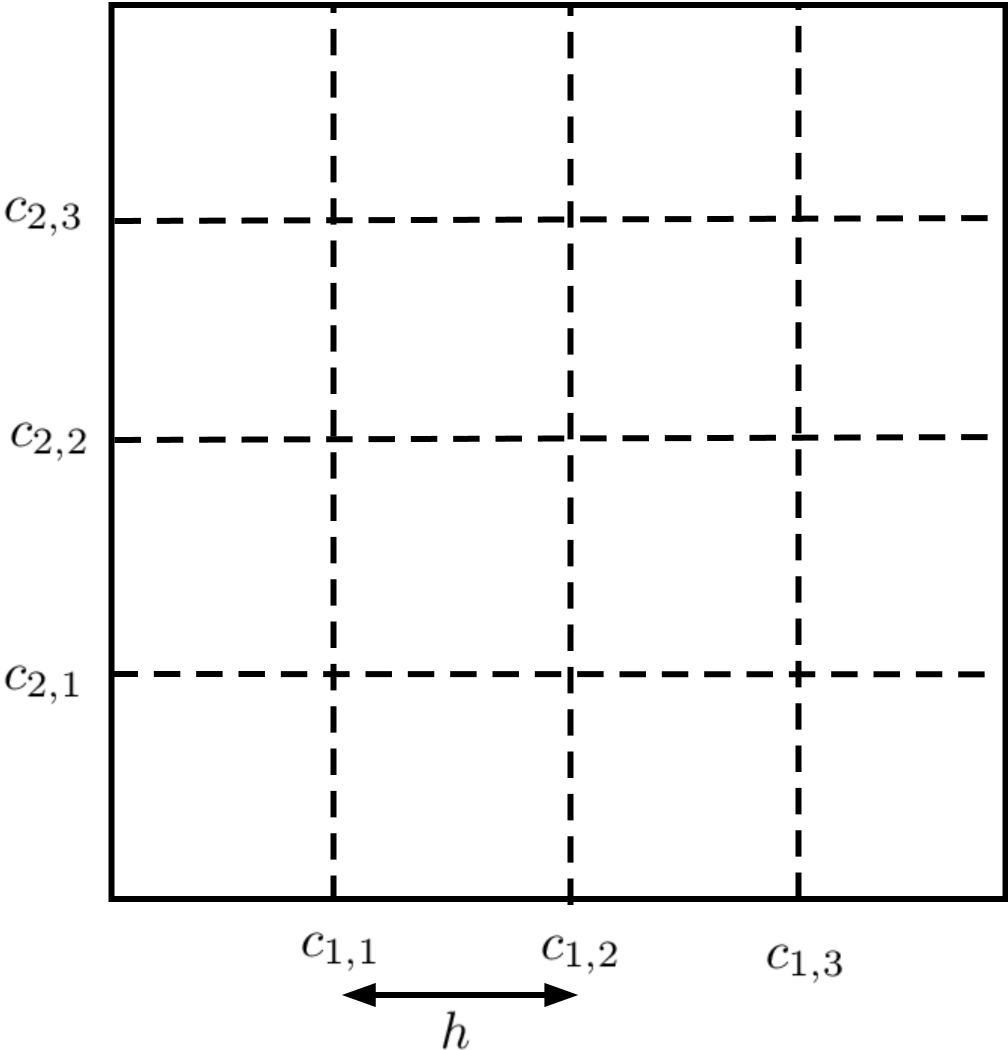}
\includegraphics[scale=0.2]{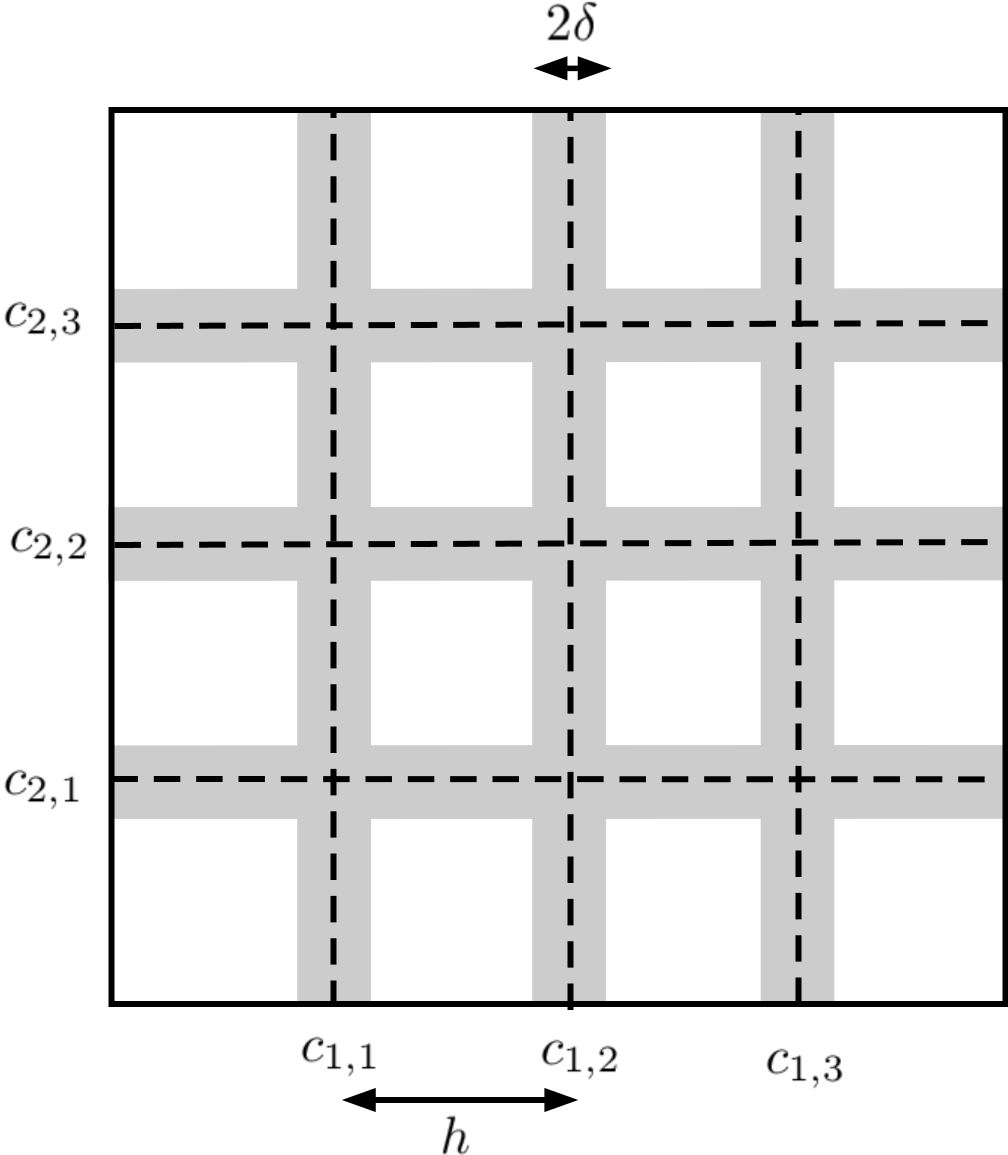}
\caption{Left: Grid of size $h$. Right: The gray area represents the union of the strips $\mathcal{H}_{k,n}^\delta$.}\label{Fig:strips}
\end{center}
\end{figure}

\item Therefore, for every $\epsilon>0$, there exist $h_\epsilon>0$ and $\delta_\epsilon>0$ such that
\begin{equation}\label{eq:inter}\|\rho_0^h\mathbb{1}_{\mathcal{R}_\epsilon\backslash \Omega_{\delta_\epsilon}}-\rho_0\|_{L^1(\mathbb{R}^d)}<\epsilon,\quad\|\rho_T^h\mathbb{1}_{\mathcal{R}_\epsilon\backslash \Omega_{\delta_\epsilon}}-\rho_T\|_{L^1(\mathbb{R}^d)}<\epsilon.
\end{equation}
Hence we define 
$$\tilde \rho_0:=\rho_0^h\mathbb{1}_{\mathcal{R}_\epsilon\backslash \Omega_{\delta_\epsilon}}=\sum_{j\in h\mathbb{Z}^d\cap \mathcal{R}_\epsilon}m_{j,0}\mathbb{1}_{\Box_{j+h/2(1,1,...,1),h-\delta}}(x) $$
$$\tilde \rho_T:=\rho_T^h\mathbb{1}_{\mathcal{R}_\epsilon\backslash \Omega_{\delta_\epsilon}}=\sum_{j\in h\mathbb{Z}^d\cap \mathcal{R}_\epsilon}m_{j,T}\mathbb{1}_{\Box_{j+h/2(1,1,...,1),h-\delta}}(x)$$
$$\mathcal{R}:=\mathcal{R}_\epsilon\backslash\Omega_{\delta_\epsilon}=\bigcup_{j\in h\mathbb{Z}^d\cap\mathcal{R}_\epsilon} \Box_{j+h/2(1,1,...,1),h-\delta}.$$
\end{enumerate}
\noindent\textbf{Step 2: Time reversibility and intermediate configurations.} In this step we will make use of the time-reversibility of the equation to find an equivalent target. See Figure \ref{schemee} for a sketch of the structure of the control process.

\begin{figure}
\includegraphics[scale=0.3]{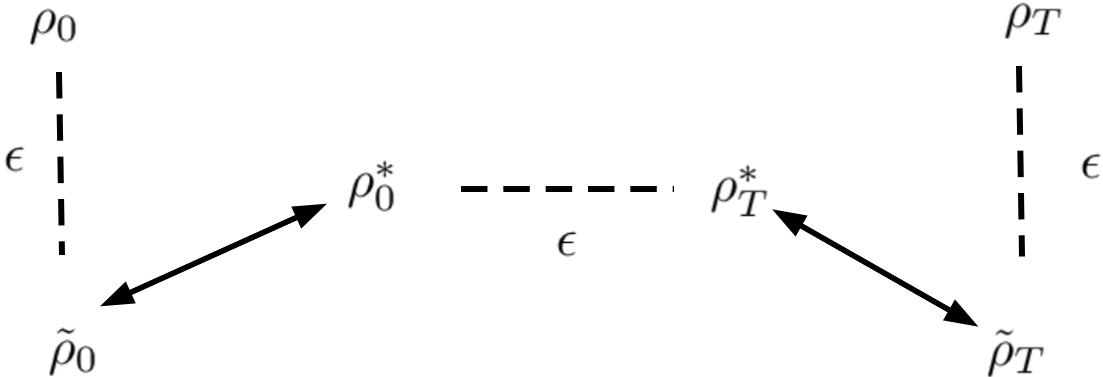}
\caption{Scheme of the proof. The dashed lines indicate $\epsilon$-closeness in $L^1$ and the solid lines with arrows the connections where controllability will be needed. The contraction property guarantees that, by reversing and making these successive approximations, we can easily control the error made.}\label{schemee}
\end{figure}
\begin{enumerate}
\item Let us consider a uniform distribution supported on the support of the initial data and the target
$$\rho_0^*:=\frac{\|\tilde \rho_0\|_{L^1(\mathbb{R}^d)}}{|\mathcal{R}|}\mathbb{1}_{\mathcal{R}},\quad \rho_T^*:=\frac{\|\tilde \rho_T\|_{L^1(\mathbb{R}^d)}}{|\mathcal{R}|}\mathbb{1}_{\mathcal{R}}.$$
Thanks to \eqref{eq:inter} one can immediately see that
$$\|\rho_0^*-\rho_T^*\|_{L^1(\mathbb{R}^d)}=\left| \|\tilde \rho_0\|_{L^1(\mathbb{R}^d)}-\|\tilde \rho_T\|_{L^1(\mathbb{R}^d)}\right|<2\epsilon.$$
\item Now, let us assume that there exist piecewise constant controls $a_{0,\epsilon},w_{0,\epsilon},a_{T,\epsilon},w_{T,\epsilon}\in BV((0,T/2);\mathbb{R}^d)$ and $b_{0,\epsilon},b_{T,\epsilon}\in BV((0,T/2);\mathbb{R})$ such that the solution of

\begin{equation*}
\begin{cases}
\partial_t\rho+\mathrm{div}_x\left(V_{0,\epsilon}(x,t)\rho\right)=0\quad (x,t)\in \mathbb{R}^d\times (0,T/2)\\
\rho(0)=\rho_0^*
\end{cases}
\end{equation*}
with $V_{0,\epsilon}(x,t)=w_{0,\epsilon}\sigma(\langle a_{0,\epsilon},x\rangle +b_{0,\epsilon})$ satisfies
$$\|\rho(T/2)-\tilde\rho_0\|_{L^1(\mathbb{R}^d)}<\epsilon$$
and 
\begin{equation*}
\begin{cases}
\partial_t\rho+\mathrm{div}_x\left(V_{T,\epsilon}(x,t)\rho\right)=0\quad (x,t)\in \mathbb{R}^d\times (0,T/2)\\
\rho(0)=\rho_T^*
\end{cases}
\end{equation*}
with $V_{T,\epsilon}(x,t)=w_{T,\epsilon}\sigma(\langle a_{T,\epsilon},x\rangle +b_{T,\epsilon})$ satisfying
$$\|\rho(T/2)-\tilde\rho_T\|_{L^1(\mathbb{R}^d)}<\epsilon.$$

Then the control as in  \eqref{ansatz} constituted by
$$ a(t)=
\begin{cases}
 a_{0,\epsilon}(T/2-t) \quad &t\leq T/2\\
 a_{T,\epsilon}(t-T/2)\quad &t\geq T/2
\end{cases},
\quad b(t)=
\begin{cases}
 b_{0,\epsilon}(T/2-t) \quad &t\leq T/2\\
 b_{T,\epsilon}(t-T/2)\quad &t\geq T/2
\end{cases},$$
$$w(t)=
\begin{cases}
 -w_{0,\epsilon}(T/2-t) \quad &t\leq T/2\\
 w_{T,\epsilon}(t-T/2)\quad &t\geq T/2
\end{cases}
$$
applied to 
\begin{equation*}
\begin{cases}
\partial_t\rho+\mathrm{div}_x\left(V(x,t)\rho\right)=0\quad (x,t)\in \mathbb{R}^d\times (0,T)\\
\rho(0)=\rho_0
\end{cases}
\end{equation*}
satisfies 
$$\|\rho(T)-\rho_T\|_{L^1(\mathbb{R}^d)}<3\epsilon.$$

\end{enumerate}

Therefore it is enough to study the controllability from $\rho_T^*$ to $\tilde \rho_T$ and from $\rho_0^*$ to $\tilde \rho_0$. The argument is the same in both cases.

\noindent\textbf{Step 3: Control.} The control strategy is built by induction on the dimension $d$. The first step will be to control both the initial density $\rho_T^*$ and the target density $\tilde\rho_T^*$ into a configuration in which a $(d-1)$-dimensional controllability result can be applied. 

\begin{enumerate}

\item \textbf{Slicing $\rho_T^*$ and $\tilde\rho_T$ to descend to the $(d-1)$-dimensional setting.} 

First of all, recall that, by construction, 
 both, $\rho_T^*$ and $\tilde \rho_T$ take the form
$$\tilde \rho_T=\sum_{j\in h\mathbb{Z}^d\cap \mathcal{R}_\epsilon}m_{j,T}\mathbb{1}_{\Box_{j+h/2(1,1,...,1),h-\delta}}$$
$$\rho_T^*:=\frac{\|\tilde \rho_T\|_{L^1(\mathbb{R}^d)}}{|\mathcal{R}|}\mathbb{1}_{\mathcal{R}}=\sum_{j\in h\mathbb{Z}^d\cap \mathcal{R}_\epsilon}\frac{\|\tilde \rho_T\|_{L^1(\mathbb{R}^d)}}{|\mathcal{R}|}\mathbb{1}_{\Box_{j+h/2(1,1,...,1),h-\delta}}.$$
{\color{black} Therefore, they are in the setting of Lemma \ref{Lpt}. By choosing the hyperplanes $x^{(d)}=c_{dl}$ constructed in \eqref{Ehyp}, we apply simultaneously Lemma \ref{Lpt} to $\rho_T^*$ and $\tilde\rho_T$ in a sequential manner. We do it consecutively  for every $l=1,...,\left\lceil\frac{2R_\epsilon}{h}\right\rceil$ in a way that one arrives  to configurations fulfilling that, for some $\delta_0>0$,}

 $$\eta_T^*= \sum_{i} \frac{\|\tilde \rho_T\|_{L^1(\mathbb{R}^d)}}{|\mathcal{R}|}\mathbb{1}_{\Box_{\tilde{c}_i,R_0}} ,\quad\tilde \eta_T= \sum_{i} m_{j,T}\mathbb{1}_{\Box_{\tilde{c}_i,R_0}}.$$
Since $\tilde{\rho}_T$ and $\rho_T^*$ have the same support, and we apply the same controlled dynamics to both initial data, $\tilde{\eta}_T^*$ and $\eta_T^*$ have the same support. Furthermore since Lemma \ref{Lpt} allows arbitrary translations the following can also be fulfilled
  $$\mathrm{dist}\left(P_{(x^{(1)},x^{(2)},...,x^{(d-1)})}\Box_{\tilde{c}_i,R_0},P_{(x^{(1)},x^{(2)},...,x^{(d-1)})}\Box_{\tilde{c}_j,R_0}\right)>\delta_0, \quad  \forall j\neq i$$
  {\color{black} where by $P_{(x^{(1)},x^{(2)},...,x^{(d-1)})}$ we denote the projection into the first $(d-1)$ coordinates. This last inequality guarantees that, once we project the squares into the $(d-1)$ dimensional space, they do not intersect. }

\item \textbf{$(d-1)$-dimensional control, induction step.}

Note that now both the initial configuration and the target have been controlled to densities that do not depend on the variable $x^{(d)}$ on its support, i.e.
$$\tilde \eta_T(x=(x^{(1)},x^{(2)},...,x^{(d-1)},x^{(d)}))=
\tilde \eta_T(\tilde{x}=(x^{(1)},x^{(2)},...,x^{(d-1)},y^{(d)})) \text{ if } \tilde{x}\in \mathrm{supp}(\rho)$$
and the same applies for $\eta_T^*$.
Therefore, we reduced the dimension of the problem by one. Arguing recursively, we end at the $1-d$ case just above. Summarising, the proposition below has been proved: 
\begin{figure}
\begin{center}
	\includegraphics[scale=0.16]{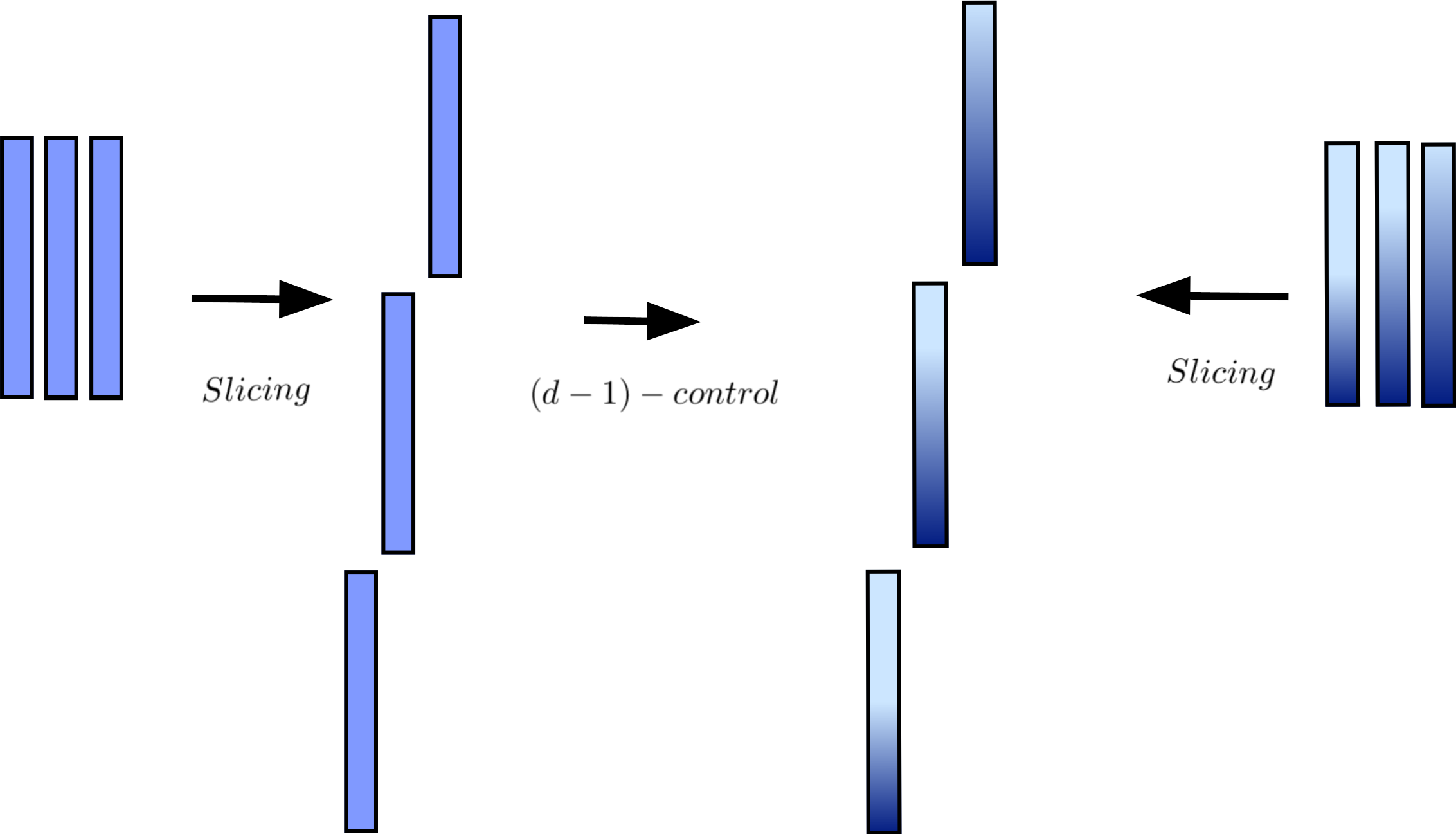}
\end{center}
\caption{Scheme of the induction procedure for the approximate control of the continuity equation. The uniform blue rectangles represent a homogeneous constant density related to the initial data and the degraded blue ones represent a non-homogeneous density playing the role of the target.}\label{dscheme}
\end{figure}

\begin{prop} (Dimension ascent recursion)
Assume that for every pair of probability densities $\eta_0,\eta_T\in L^1(\mathbb{R}^{d-1})$ and for every $\epsilon>0$, there exist control functions $\tilde{w},\tilde{a}\in BV((0,T);\mathbb{R}^{d-1})$ and $\tilde{b}\in BV((0,T);\mathbb{R})$ (depending on $\epsilon$) such that
\begin{equation*}
\begin{cases}
\partial_t\eta+\mathrm{div}_x\big(w(t)\sigma(\langle a(t),x\rangle +b(t))\eta\big)=0\quad (x,t)\in \mathbb{R}^{d-1}\times (0,T)\\
\eta(0)=\eta_0
\end{cases}
\end{equation*}
satisfies
$$\|\eta(T)-\eta_T\|_{L^1(\mathbb{R}^{d-1})}<\epsilon.$$
Then, the same holds in dimension $d$. More, precisely for each pair of probability densities $\rho_0,\rho_T\in L^1(\mathbb{R}^d)$ and $\epsilon>0 $, there exists controls $w,a\in BV((0,T);\mathbb{R}^d)$ and $b\in BV((0,T);\mathbb{R})$ such that
\begin{equation*}
\begin{cases}
\partial_t\rho+\mathrm{div}_x\big(w(t)\sigma(\langle a(t),x\rangle +b(t))\rho\big)=0\quad (x,t)\in \mathbb{R}^{d}\times (0,T)\\
\rho(0)=\rho_0
\end{cases}
\end{equation*}
satisfying
$$\|\rho(T)-\rho_T\|_{L^1(\mathbb{R}^{d})}<\epsilon.$$

\end{prop}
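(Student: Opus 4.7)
The plan is to reduce the $d$-dimensional problem to a $(d-1)$-dimensional one via a geometric rearrangement that is achievable with the parallel-translation primitive of Lemma \ref{Lpt}. Following the strategy already outlined in Steps~1--3 of the preceding construction, I would first use the $L^1$-contraction Lemma \ref{L1stab} to reduce the problem to controlling between the mesh-based truncations $\tilde\rho_0$ and $\tilde\rho_T$, both supported on a disjoint union of small hypercubes $\Box_{j+(h/2)(1,\ldots,1),\,h-\delta}$ separated by strips of width $\delta$. By time-reversibility, it then suffices to find two separate controls: one driving the uniform distribution $\rho_0^*$ on $\mathcal{R}$ close to $\tilde\rho_0$, and one driving $\rho_T^*$ close to $\tilde\rho_T$. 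Concatenating the reversal of the first with the second yields the desired control from $\rho_0$ to $\rho_T$, at a total $L^1$-error of order $\epsilon$.

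The main new step is the dimension descent. I would focus, say, on controlling $\rho_T^*$ to $\tilde\rho_T$. The target cells are stacked in the $x^{(d)}$-direction at heights $c_{d,1}<c_{d,2}<\cdots$ with gaps of width at least $\delta$. The idea is to apply Lemma \ref{Lpt} repeatedly, taking as hyperplane a plane $x^{(d)}=c_{d,l}+\text{const}$ lying in one of the separating strips, so that the cells strictly above the hyperplane are translated rigidly in the $x^{(d)}$-direction without modifying the cells below. Sweeping $l$ from the top down (or bottom up), I can collapse all cells of both $\rho_T^*$ and $\tilde\rho_T$ onto a single $(d-1)$-dimensional slab, simultaneously, because Lemma \ref{Lpt} translates everything lying on one side of the separating hyperplane by the same vector regardless of the mass profile. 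Crucially, I would arrange the translations so that the projections $P_{(x^{(1)},\ldots,x^{(d-1)})}\Box_{\tilde c_i,R_0}$ of distinct final cells remain separated by at least some $\delta_0>0$, which is possible because Lemma \ref{Lpt} permits arbitrary translation amplitudes and the finitely many cells can be placed in disjoint $(d-1)$-dimensional slots.

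After this slicing procedure, both the initial configuration $\eta_T^*$ and the target $\tilde\eta_T$ are independent of $x^{(d)}$ on their (common) support. Restricting the neural vector field ansatz \eqref{ansatz} to use only the first $d-1$ coordinates of $a$ and $w$ (and equivalently thinking of $\eta_T^*$ and $\tilde\eta_T$ as $(d-1)$-dimensional densities on each slice), the induction hypothesis yields piecewise constant controls $\tilde w,\tilde a\in BV((0,T);\mathbb{R}^{d-1})$ and $\tilde b\in BV((0,T);\mathbb{R})$ that approximately drive one to the other in $L^1(\mathbb{R}^{d-1})$ within error $\epsilon$. Embedding via $w=(\tilde w,0)$, $a=(\tilde a,0)$ gives the required $d$-dimensional control, and Fubini plus the fact that the $x^{(d)}$-dynamics vanishes turns the $(d-1)$-dimensional $L^1$-error into the $d$-dimensional one.

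The main obstacle is the bookkeeping in the slicing step: one must verify that Lemma \ref{Lpt} can indeed be applied sequentially with the required hyperplane-separation condition preserved at each stage. This is where the separating strips of width $\delta$ in the construction of $\tilde\rho_0,\tilde\rho_T$ are essential, because they give the nontrivial gap $\epsilon>0$ between projected supports that Lemma \ref{Lpt} demands; without them the parallel translation primitive would fail. Once this is in place, the final step of splicing two BV-in-time controls (the time-reversed control from $\rho_0^*$ to $\tilde\rho_0$ followed by the control from $\rho_T^*$ to $\tilde\rho_T$) is routine, and a triangle-inequality argument combining \eqref{eq:inter}, $\|\rho_0^*-\rho_T^*\|_{L^1}<2\epsilon$, and Lemma \ref{L1stab} delivers the desired bound with $\epsilon$ replaced by $C\epsilon$ for an absolute constant $C$, which is enough after renormalising $\epsilon$.
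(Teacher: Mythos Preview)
Your overall architecture---reduce via Lemma~\ref{L1stab} to the piecewise-constant truncations $\tilde\rho_0,\tilde\rho_T$, use time-reversibility through the uniform densities $\rho_0^*,\rho_T^*$, then slice with Lemma~\ref{Lpt} to make the configuration amenable to the $(d-1)$-dimensional hypothesis---is exactly the paper's, and the final splicing/triangle-inequality bookkeeping is fine.

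There is, however, a genuine confusion in your description of the slicing step. Lemma~\ref{Lpt} translates \emph{parallel} to the separating hyperplane, not perpendicular to it: the separation hypothesis is on the $x^{(1)}$-projection and the translation is by $\kappa(0,1,0,\ldots,0)$. With the roles permuted so that the separating hyperplane is $x^{(d)}=c_{d,l}$, the available translation is therefore in one of the directions $x^{(1)},\ldots,x^{(d-1)}$, \emph{not} in $x^{(d)}$. Consequently you cannot ``collapse all cells onto a single $(d-1)$-dimensional slab'' as you describe; translating in $x^{(d)}$ is not what Lemma~\ref{Lpt} provides, and in any case a translation in $x^{(d)}$ would leave the $(d-1)$-projections unchanged, so it could never produce the disjoint projections you (correctly) say you need.

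What the paper actually does---and what your stated goal requires---is the opposite motion: use the hyperplanes $x^{(d)}=c_{d,l}$ to isolate each $x^{(d)}$-layer and translate it in the first $(d-1)$ coordinates, spreading the layers out so that the projections $P_{(x^{(1)},\ldots,x^{(d-1)})}\Box_{\tilde c_i,R_0}$ become pairwise separated by some $\delta_0>0$. The cells remain at their original $x^{(d)}$-heights; nothing is collapsed. The resulting densities $\eta_T^*,\tilde\eta_T$ are then constant along $x^{(d)}$-fibres \emph{within their support} (because for each $x'\in\mathbb{R}^{d-1}$ at most one box contains it), which is the sense in which the problem becomes $(d-1)$-dimensional. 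Once you correct the direction of the translation, the rest of your argument (embedding the $(d-1)$-controls via $w=(\tilde w,0)$, $a=(\tilde a,0)$ and invoking the induction hypothesis) goes through as in the paper.
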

\end{enumerate}

\section{Bounds on the controls}\label{S:bounds}

\subsection{Lower entropy bounds for the control}
In this section we present a lower bound for the norm of the controls depending on the difference of entropies of  initial and target densities. This result applies in any dimension and any neural network structure and, in particular, for the ansatz adopted in this paper.

In the next subsection, we will revisit the proof of  Theorem \ref{TH:aprox} and provide bounds for approximate controls. In the final section of this paper, we will also address the problem of exact control in $1-d$. 

As before, we assume that the vector field $V\in BV((0,T);\mathrm{Lip}(\mathbb{R}^d))$.

The entropy of a density function is defined as
$$ S(\rho)=\int_{\mathbb{R}^d} \rho\log(\rho) dx.$$
Using the transport equation and integration by parts, the time-derivative of the entropy is given by
\begin{equation*}
\frac{d}{dt}S(\rho(t))=\int_{\mathbb{R}^d} \mathrm{div}_x(V)\rho dx.
\end{equation*}
Hence,  for any vector field $V$ such that
\begin{equation*}
\begin{cases}
\partial_t \rho+\mathrm{div}_x(V(x,t)\rho)=0\\
\rho(0)=\rho_0,\quad \rho(T)=\rho_T
\end{cases}
\end{equation*}
necessarily fulfils
\begin{equation}\label{Egab}|S(\rho_T)-S(\rho_0)|\leq \int_0^T \left\|\mathrm{div}_x V \right\|_\infty dt.\end{equation}
This allows us to obtain lower bounds for the controls depending on the entropy gap between the initial data and the target. This applies in the particular case that $V$ fulfils a neural network ansatz.

\begin{prop}[Lower bounds for the control depending on the entropy gap]
Assume that
 \begin{equation}\label{Kneu}
 V(x,t)=\frac{1}{K}\sum_{i=1}^K w_i(t)\sigma(\langle a_i(t),x\rangle +b_i(t))
 \end{equation}
  is such that
$\rho(0)=\rho_0$ and $\rho(T)=\rho_T.$
	\begin{enumerate}
		\item Let $\sigma$ be a Lipschitz activation function, with Lipschitz constant to be $1$. Then
			\begin{equation*}
				|S(\rho_T)-S(\rho_0)|\leq \frac{1}{K} \sum_{i=1}^K \|w\|_{L^2((0,T);\mathbb{R}^d)}\|a\|_{L^2((0,T);\mathbb{R}^d)}.
			\end{equation*}

		\item When $\sigma$ is the ReLU, we can fix the $L^\infty$-norm of $a_i$ to be $1$ and hence
			\begin{equation*}
			|S(\rho_T)-S(\rho_0)|\leq \frac{1}{K} \sum_{i=1}^K \|w\|_{L^1((0,T);\mathbb{R}^d)}.
			\end{equation*}
	\end{enumerate}
\end{prop}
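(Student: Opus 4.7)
The plan is to bound the right-hand side of the entropy inequality~\eqref{Egab}, which reduces the entire proposition to controlling $\|\mathrm{div}_x V(\cdot,t)\|_\infty$ for a vector field of the shape~\eqref{Kneu}. First I would compute, summand by summand,
\[
\mathrm{div}_x\bigl[w_i(t)\sigma(\langle a_i(t),x\rangle + b_i(t))\bigr] = \sigma'(\langle a_i(t),x\rangle + b_i(t))\,\langle w_i(t), a_i(t)\rangle
\]
at the points where $\sigma$ is differentiable. Since $\sigma$ is Lipschitz with constant $1$, one has $|\sigma'|\le 1$ almost everywhere, and Cauchy--Schwarz in $\mathbb{R}^d$ yields
\[
\|\mathrm{div}_x V(\cdot,t)\|_\infty \le \frac{1}{K}\sum_{i=1}^K |\langle w_i(t),a_i(t)\rangle| \le \frac{1}{K}\sum_{i=1}^K \|w_i(t)\|\,\|a_i(t)\|.
\]

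For part (1), I would substitute this pointwise bound into~\eqref{Egab} and apply Cauchy--Schwarz in the time variable to each of the $K$ integrals $\int_0^T \|w_i(t)\|\,\|a_i(t)\|\,dt$, obtaining the claimed product of $L^2$ norms. For part (2), the additional ingredient is the positive homogeneity of the ReLU: for any $\alpha>0$ the identity $w\sigma(\langle a,x\rangle + b) = (w/\alpha)\sigma(\langle \alpha a, x\rangle + \alpha b)$ holds pointwise, so I can rescale at each $t$ (absorbing $\|a_i(t)\|$ into $w_i(t)$) to arrange $\|a_i(t)\|=1$ whenever $a_i(t)\neq 0$, without altering $V$. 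The pointwise divergence estimate then collapses to $\|\mathrm{div}_x V(\cdot,t)\|_\infty \le \frac{1}{K}\sum_i \|w_i(t)\|$, and a single time-integration yields the $L^1$ bound.

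The only delicate point I would watch for is the limited regularity of $\sigma$: inequality~\eqref{Egab} rests on an integration by parts, and here $V$ is merely Lipschitz in $x$, so $\mathrm{div}_x V$ must be understood almost everywhere and $\|\cdot\|_\infty$ as an essential supremum. This is fully compatible with the DiPerna--Lions framework underlying the well-posedness of~\eqref{NT} already invoked in the introduction, so it is not a genuine obstruction. The whole argument is essentially a direct calculation; the only substantive idea is the rescaling trick in part (2), which converts an $L^2\cdot L^2$ product into a single $L^1$ norm.
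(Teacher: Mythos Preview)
Your proposal is correct and follows essentially the same route as the paper: compute $\mathrm{div}_x V$ term by term, bound $|\sigma'|\le 1$, then apply Cauchy--Schwarz in time for part~(1) and the positive homogeneity of the ReLU to normalise $a_i$ for part~(2). Your added remarks on the a.e.\ interpretation of $\mathrm{div}_x V$ are a welcome precision that the paper leaves implicit.
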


\begin{proof}
\begin{enumerate}
\item Assume $\sigma$ is such that $|\sigma'|\leq 1$.

Apply formula \eqref{Egab} for \eqref{Kneu} to obtain that
\begin{align*}
|S(\rho_T)-S(\rho_0)|&\leq \frac{1}{K}\sum_{i=1}^K \int_0^T  | \langle w_i(t),a_i(t)\rangle| \sup_{x}|\sigma'(\langle a_i(t),x\rangle+b_i(t))|dt\\
&\leq \frac{1}{K}\sum_{i=1}^K \int_0^T  | \langle w_i(t),a_i(t)\rangle| dt\leq \frac{1}{K}\sum_{i=1}^K\|w_i\|_{L^2((0,T):\mathbb{R}^d)}\|a_i\|_{L^2((0,T):\mathbb{R}^d)}.
 \end{align*}

\item For $\sigma=\max\{x,0\}$, the ReLU, by homogeneity, we can fix the $L^\infty$-norm of $a$ to be $1$ and focus on the bound for $w$:
\begin{equation}\label{above}
|S(\rho(T))-S(\rho_0)|\leq \frac{1}{K}\sum_{i=1}^K \int_0^T  | \langle w_i(t),a_i(t)\rangle| dt\leq \frac{1}{K}\sum_{i=1}^K\|w_i\|_{L^1((0,T):\mathbb{R}^d)}.
 \end{equation}








\end{enumerate}

\end{proof}

\subsection{Upper bounds on the control.}

The constructive proof we developed for the approximate controllability of the transport equation allows us to give bounds on the $BV$-norms of the controls by counting the discontinuities and estimating the $L^\infty$-norms. Since the proof is inductive, the estimate is reduced to a recurrence.

 Let us first discuss qualitatively the main phenomena that the constructed flows exhibit: 

\begin{enumerate}
\item \textbf{Preservation of the topology of the support.}
Since the ODE is well-posed, the map from the initial data to the final ones is a diffeomorphism and, consequently, topology preserving. This implies that if the supports of the initial data and the target do not have the same topology, the dynamics cannot be exactly controlled. In those cases the cost of approximate controllability will blow-up as the distance to the target decreases. Consequently, for instance, the $L^1$-approximate control will be necessarily large when  controlling from the characteristic function of a ball to the characteristic function of a torus.

\item \textbf{High and low mass concentration.}
The explicit controls we built in Section \ref{S:deformations} show that when controlling a constant  density
$$\rho_0(x)=\mathbb{1}_{[0,1]}(x)$$
to 
$$\rho_T(x)=\eta\mathbb{1}_{[0,1/\eta]}(x)$$
the constant control depends on $|\log(\eta)|$.

On the other side, the entropy gap between two Gaussian distributions $\mathcal{N}(0,\sigma_1)$ and $\mathcal{N}(0,\sigma_2)$ is
$$|S(\mathcal{N}(0,\sigma_1))-S(\mathcal{N}(0,\sigma_2))|= \left|\log\left(\frac{\sigma_1}{\sigma_2}\right)\right| $$
which gives a lower bound on the control depending on the concentration.

\item \textbf{Regularity}. The controls we use are well adapted to piecewise constant functions. In fact, the  proof consists on building a piecewise constant target, close to the aimed one,  and then build the control strategy. This procedure requires explicit approximation estimates. We chose the Lipschitz class, but other options are possible, as long as one can quantify the proximity of the piecewise constant approximation (see \cite{quarteroni2008numerical}). In Section \ref{S:Conclusion} we will see how, by assuming more regularity and qualitative properties on the target function, one can aim for uniform bounds on the approximate control and pass to the limit to obtain exact controllability in the $1-d$ case.

\item \textbf{Asymptotic behavior.} Getting bounds on the controls requires also a priori knowledge on the behavior of the target and initial data at infinity. Note that in the constructive approach, we consider a big hypercube $\mathcal{R}_\epsilon$ such that, both the target and the initial condition satisfy 
$$\|\rho-\mathbb{1}_{\mathcal{R}_\epsilon}\rho\|_{L^1(\mathbb{R}^d)}\leq \epsilon. $$
The size of the hyperrectangle depends on the asymptotic decay of both the target and the initial condition.  The number of elements in the mesh will strongly depend on the size of $\mathcal{R}_\epsilon$.

\item \textbf{Blow-ups.} Our construction applies to $L^1$-data that are not necessarily bounded. But to obtain precise estimates on the controls we need upper bounds on the concentration of the mass. Indeed, in the construction of the proof in Section \ref{S:proof}, we construct bands of size $\delta$ that depend on the distribution function. The stronger the concentration of mass, the smaller $\delta$  has to be considered. 
A precise scaling  relationship between $\delta$ and $\epsilon$ necessarily involves a quantification of the concentration of mass, for instance by the local H{\"o}lder exponent \cite{jaffard1996wavelet}. At the same time, the smaller $\delta$ is, the higher the norm of the controls in Section \ref{S:deformations} needed to achieve the desired purpose.

\end{enumerate}

After this general discussion, we proceed to obtain bounds on the number of discontinuities and the $L^\infty$-norm of the controls.

Fix $R>0$, and consider the hypercube $\mathcal{R}=[-R,R]^d$. Let us fix $h>0$ and $\delta>0$ and consider the functions supported in
$$\mathcal{R}\backslash \Omega_\delta$$
where $\Omega_\delta$ is defined as in \eqref{strips} in the proof in Section \ref{S:proof}. Therefore, we have a finite number of hypercubes $N$,
$N=\left\lceil 2R/h\right\rceil^d.$

We will consider that the initial data $\rho_0$ and the target $\rho_T$ are functions which are constant on each hyper-cube satisfying that
$$\min_{x\in\mathrm{supp}(\rho_T)} \rho_T(x)=c>0.$$
Furthermore denote  $K=\|\rho_T\|_{L^\infty(\mathbb{R}^d)}$.

\noindent \textbf{Number of discontinuities.} If we denote by $s_k$ the number of discontinuities of the slicing process in dimension $k$, the number of discontinuities of the $k-$dimensional control $D_k$ is equal to:
$$D_k=s_k+D_{k-1},\quad k\geq 2$$
with
$$s_k=2\left\lceil\frac{2R}{h}\right\rceil^{d+1-k}.$$

\noindent \textbf{Number of discontinuities in the $1-d$ case.}
The number of discontinuities in $1-d$  depends on the approximation $\epsilon$ and the number of elements. We will again consider a uniform distribution on a unit interval. Following the induction proof in $1-d$, one realises that the number of discontinuities is $5N$ where $N$ is the number of elements, which depends on the number of elements of the  multidimensional grid. Since this corresponds to the cost of controlling from a uniform distribution to the target, by time-reversibility, we should double the cost for controlling from this very same distribution to the initial distribution.
 Therefore
$$D_1\leq 10 N\leq 10 \left\lceil2R/h\right\rceil^{d}.$$

Hence the total number of discontinuities is bounded by
$$D_d\leq \left\lceil2R/h\right\rceil^{d}  (d+10).$$

\noindent\textbf{$L^\infty$-norm for $w$.}  We can fix the $L^\infty$-norm of $a$ to be $1$ and develop the analysis on $w$. The $L^\infty$-norm of $b$ will essentially be bounded by $h$ times the total number of hypercubes.

Indeed, the recurrence above holds for the $L^\infty$-norm. For estimating it, we will assume that all steps have been done by employing $w,a$ with norm $1$ and estimating the controllability times. Naming $T_{k}$ the time needed to control the equation in dimension $k$, with norm $1$ controls in $w$ and $a$, and denoting by $\tau_{k}$ the time needed to slice in the $k$-th dimension we have that:
$$T_k=\tau_k+T_{k-1},\quad k\geq 2$$
where  $\tau_k$ is given by
$$\tau_k=\frac{2}{\delta}\left\lceil 2R/h\right\rceil^{d+1-k}$$
and $\delta$ is the width of the strips $\Omega_\delta$. Therefore the velocity is of intensity $1/\delta$ and the total displacement that has to be performed is of the order of $\left\lceil 2R/h\right\rceil^{d+1-k}$. The factor 2 is due to composing two times the action of the ReLU to  realise a translation.

\noindent\textbf{$L^\infty$-norm for $w$ for the $1-d$ case.}
In this case, to get $1-d$-bounds on  controls, we recall that we assumed that both the target and the initial data are bounded by below and above in its supports, i.e.
$$c\leq\rho_0\leq K\text{ in }\mathrm{supp}(\rho_0),\quad c\leq \rho_T\leq K\text{ in }\mathrm{supp}(\rho_T).$$
Then, the time needed to endow a compression/dilation from a uniform distribution of a unit interval to a target height can be bounded by
$$T_{\text{comp/dil}}\leq |\log(K)|+|\log(c)|.$$
The translation depends on the velocity and the distance to the target location, i.e.
$$T_{\text{Trans}}\leq \left|\log\left(\frac{2N (hN+2)}{\epsilon}\right)\right|$$
where, the term $2N/\epsilon$ comes from $\tau$ in the proof in Section \ref{S:proof} equation \eqref{tau}, and $|\mathrm{supp}(\rho)\cup\mathrm{supp}(\rho_T)|+2$ is a bound on the maximal distance of the translation.

Since we have to deform and translate $N$ elements, and we have to do it twice (from the uniform density to the target and to the initial one respectively), the total time required in the $1-d$ case is
$$T_1\leq 2N\left(\left|\log\left(\frac{2N (hN+2)}{\epsilon}\right)\right|+|\log(K)|+|\log(c)| \right).$$
Hence
$$T_1\leq 2\left\lceil2R/h\right\rceil^{d}\left(\left|\log\left(\frac{2\left\lceil2R/h\right\rceil^{d} (2\left\lceil2R/h\right\rceil^{d}+2)}{\epsilon}\right)\right|+|\log(K)|+|\log(c)| \right).$$

Therefore the $L^\infty$-norm can be bounded by
$$T_d\leq d\delta\left\lceil 2R/h\right\rceil^{d}+T_1.$$

\noindent\textbf{$L^\infty$-norm for $a$ and $b$.}
The $L^\infty$-norm for $a$ can be set to be $1$ and the $L^\infty$-norm of $b$ can be bounded by a quantity proportional to $h N$.

Putting things together and fixing $T=1$ one can obtain an explicit bound on the $BV$ norms of the controls recalling that, when $v$ is piecewise constant with $D$  discontinuities,  its $BV$ norm can be estimated as follows:
$$\|v\|_{BV((0,1);\mathbb{R}^d)}\leq \|v\|_{L^\infty ((0,1);\mathbb{R}^d)}+2D\|v\|_{L^\infty((0,1);\mathbb{R}^d)}.$$

\section{\textcolor{black}{Control in probability}}\label{S:probability}
The goal of this section is to prove Corollary \ref{CorolNF}.

We consider $N$ identically distributed random variables following the unknown law $\rho_T$. In this section we show and employ the convergence in probability when the number of samples $N$ is very large, to pair it with the deterministic approximate controllability result above. 

For simplicity we prove it in dimension $d=1$ but the extension to several space dimensions is straightforward. It suffices to take into account the number of hypercubes required in the approximation arguments, which depends on the dimension.

We proceed in several steps.

\begin{proof}




{\it Step 1. Probabilistic approximation. }

Fix $h>0$ and consider the following random function constructed with the samples $\{x_i\}_{i=1}^N$
\begin{equation}\label{gridN}
\rho_{T,h,N}(x)=\sum_{j\in h\mathbb{Z}}\frac{1}{h}\left(\frac{1}{N}\sum_{i=1}^N \mathbb{1}_{[j,j+h)}(x_i)\right)\mathbb{1}_{[j,j+h)}(x).
\end{equation}
Note that $\rho_{T,h,N}\in L^1(\mathbb{R})$.
 
Consider, on the other hand, the piece-wise constant approximation of $\rho_T$
\begin{equation}\label{grid}
\rho_{T,h}(x)=\sum_{j\in h\mathbb{Z}}\frac{1}{h}\left(\int_{j}^{j+h}\rho(x)dx\right)\mathbb{1}_{[j,j+h)}(x).
\end{equation}
The key ingredient of the proof is to estimate how close is $\rho_{T,h,N}$ to $\rho_{T,h}$ in high probability.



\begin{lemma}[Law of large numbers]\label{L:LLN}
Assume $\rho_T\in L^1(\mathbb{R})$ is compactly supported. Then the following estimates hold
\begin{itemize}
\item 
$\mathbb{P}(\{\|\rho_{T,h,N}-\rho_{T,h}\|_{L^\infty(\mathbb{R})} \leq \epsilon\})> 1- \frac{|\mathrm{supp}(\rho)|}{4Nh\epsilon^2},$
\item 
$\mathbb{P}(\{\|\rho_{T,h,N}-\rho_{T,h}\|_{L^1(\mathbb{R})} \leq \epsilon\})> 1- \frac{|\mathrm{supp}(\rho)|^3}{4Nh\epsilon^2}.$
\end{itemize}
A similar result holds in the multi-dimensional case. Indeed, assume that $\rho_T\in W^{1,\infty}(\mathbb{R}^d)$ with compact support $\mathrm{supp}(\rho_T)$. Consider $\rho_{T,h}$ and $\rho_{T,N,h}$ as the multidimensional lattice analog of \eqref{grid} and \eqref{gridN} above, respectively. Then the following estimates hold
\begin{itemize}
\item 
$$\mathbb{P}(\{\|\rho_{T,h,N}-\rho_{T,h}\|_{L^\infty(\mathbb{R})} \leq \epsilon\})> 1- \frac{|\mathrm{supp}(\rho_T)|}{2Nh^d\epsilon^2}$$
\item 
$$\mathbb{P}(\{\|\rho_{T,h,N}-\rho_{T,h}\|_{L^1(\mathbb{R})} \leq \epsilon\})> 1- \frac{2|\mathrm{supp}(\rho_T)|^3}{Nh^d\epsilon^2}$$
\end{itemize}
for $h<|\mathrm{supp}(\rho_T)|.$

\end{lemma}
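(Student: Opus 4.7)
The starting observation is that on each cell $\Box_j$ (an interval $[j,j+h)$ when $d=1$, an $h$-hypercube in general) both $\rho_{T,h,N}$ and $\rho_{T,h}$ are constant, taking the values $N_j/(Nh^d)$ and $p_j/h^d$ respectively, where
\[
N_j := \#\{i\leq N:\, x_i\in \Box_j\},\qquad p_j:=\int_{\Box_j}\rho_T(x)\,dx.
\]
Since the $x_i$ are i.i.d.\ with law $\rho_T$, the count $N_j$ is $\mathrm{Binomial}(N,p_j)$, so $\mathbb{E}[N_j/N]=p_j$ and $\mathrm{Var}(N_j/N)=p_j(1-p_j)/N\leq 1/(4N)$. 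Only cells intersecting $\mathrm{supp}(\rho_T)$ contribute to $\rho_{T,h,N}-\rho_{T,h}$, and their number is at most $J\lesssim |\mathrm{supp}(\rho_T)|/h^d$ once $h$ is small compared to the diameter of the support.

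For the $L^\infty$ estimate I would apply Chebyshev's inequality cell by cell: since on $\Box_j$ the pointwise difference equals $(N_j/N-p_j)/h^d$,
\[
\mathbb{P}\!\left(\tfrac{1}{h^d}\bigl|N_j/N-p_j\bigr|\geq \epsilon\right)\leq \frac{p_j(1-p_j)}{N\epsilon^2 h^{2d}}.
\]
A union bound over the $J$ active cells, combined either with $p_j(1-p_j)\leq 1/4$ and the cell count, or with the mass identity $\sum_j p_j(1-p_j)\leq \sum_j p_j=1$, yields the announced $L^\infty$ bound (the precise constant and power of $h$ depending on which crude estimate one keeps; both scale like $|\mathrm{supp}(\rho_T)|/(N\epsilon^2 h^{\alpha})$ for some $\alpha$).

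For the $L^1$ estimate I would derive it directly from the $L^\infty$ one. The support of $\rho_{T,h,N}-\rho_{T,h}$ is contained in an $h$-neighbourhood of $\mathrm{supp}(\rho_T)$, hence
\[
\|\rho_{T,h,N}-\rho_{T,h}\|_{L^1(\mathbb{R}^d)}\leq C\,|\mathrm{supp}(\rho_T)|\,\|\rho_{T,h,N}-\rho_{T,h}\|_{L^\infty(\mathbb{R}^d)}.
\]
Applying the $L^\infty$ estimate with $\epsilon$ replaced by $\epsilon/(C|\mathrm{supp}(\rho_T)|)$ and squaring the support factor inside Chebyshev (because $\epsilon$ appears squared) produces the extra factor $|\mathrm{supp}(\rho_T)|^{2}$, giving the overall $|\mathrm{supp}(\rho_T)|^{3}$ that appears in the stated inequality. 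The multidimensional version is proved identically, with the only change being that the number of active cells becomes $O(|\mathrm{supp}(\rho_T)|/h^d)$ rather than $O(|\mathrm{supp}(\rho_T)|/h)$.

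The argument is essentially the classical variance estimate for empirical histograms, so there is no serious obstacle; the main subtlety is the bookkeeping of the two competing roles of $h$, namely the $h^d$ in the denominator of the pointwise difference (from converting mass to density) and the $h^{-d}$ from the cell count in the union bound. A slightly more direct alternative I would mention is to apply Markov's inequality to $\mathbb{E}\bigl[\|\rho_{T,h,N}-\rho_{T,h}\|_{L^1}^2\bigr]$, bounding this expectation via Cauchy--Schwarz in the cell index by $J\sum_j \mathrm{Var}(N_j/N)\leq J/N$; this yields the $L^1$ estimate without routing through $L^\infty$, at the cost of a slightly different constant in the support factor.
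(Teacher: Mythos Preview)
Your proposal is correct and follows essentially the same route as the paper: identify the cell counts as binomial, apply Chebyshev's inequality on each cell, take a union bound over the $O(|\mathrm{supp}(\rho_T)|/h^d)$ active cells to get the $L^\infty$ estimate, and then deduce the $L^1$ estimate by bounding $\|\cdot\|_{L^1}\leq |\mathrm{supp}(\rho_{T,h})|\,\|\cdot\|_{L^\infty}$ and absorbing the support factor into $\epsilon$. Your explicit tracking of the $h^d$ in the density normalisation and your alternative Markov-on-$\mathbb{E}\|\cdot\|_{L^1}^2$ route are minor additions, but the core argument is identical to the paper's.
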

\begin{proof} (of Lemma \eqref{L:LLN}).

The second point is a direct consequence of the first one. Therefore we present the proof of the first estimate in $L^\infty$.

Having $h>0$ fixed, note that we can define the following Bernoulli random variables
\begin{equation*}
y_{j,i}=\begin{cases}
1 \quad \text{ if } x_i\in[j,j+h)\\
0 \quad \text{ otherwise }.
\end{cases}
\end{equation*}
Hence
$$\mathbb{P}(y_{j,i}=1)=:p_j=\int_{j}^{j+h}\rho_T(x)dx.$$
Therefore $B_{j,N}=\sum_{i=1}^N y_{j,i}$ is a binomial random variable, {\color{black}$B(p_j,N)$ and
$$\mathbb{P}(\{B_{j,N}=k\})=\begin{pmatrix} N\\ k\end{pmatrix}p_j^k(1-p_j)^{N-k}, \quad \mathrm{Var}[B_{j,N}]=Np_j(1-p_j).$$
}Note that 
$$\|\rho_{T,h,N} -\rho_{T,h}\|_{L^\infty}=\sup_{j\in h\mathbb{Z}} \left|\frac{1}{N}\sum_{i=1}^N \mathbb{1}_{[j,j+h)}{}(x_i)-p_j\right|. $$
Now considering
$$ z_j=\frac{1}{N} B_{j,N}$$
{\color{black}one has that
$$\mathbb{E}[z_j]=p_j\quad \mathrm{Var}[z_j]=\frac{p_j(1-p_j)}{N}.$$
}
Since the support of $\rho_T$ is compact, we know that, at most there are
$$M_h=\left\lceil\frac{|\mathrm{supp}(\rho_T)|}{h}\right\rceil$$
nonzero $z_j$'s. We would like to estimate
$$\mathbb{P}\left(\cup_{j\in h\mathbb{Z}} \{|z_j-p_j|\leq \epsilon\}\right)=1-\mathbb{P}(\exists j:\quad |z_j-p_j|>\epsilon)\geq 1-\sum_{j\in h\mathbb{Z}} \mathbb{P}(\{|z_j-p_j|>\epsilon\}).$$

Therefore, recalling that  we have at most $M_h$ nonzero values, it boils down to estimate
$\mathbb{P}(\{|z_j-p_j|>\epsilon\})$, which can be done by the Chebyshev inequality, that  when $X$ is a random variable with expectance $\mathbb{E}[X]$ and variance $\mathrm{Var}[X]$, states that  for every $k\geq 1$
$$\mathbb{P}( |X-\mathbb{E}[X]|>k\mathrm{Var}[X]^{1/2})<\frac{1}{k^2}.$$
{\color{black}The inequality can be applied to $z_j$ to obtain:
\begin{equation*}
\mathbb{P}\left(\left\{|z_j-p_j|>k\sqrt{\frac{p_j(1-p_j)}{N}}\right\}\right)<\frac{1}{k^2}.
\end{equation*}
Taking 
$$k\mathrm{Var}[z_j]^{1/2}=k\sqrt{\frac{p_j(1-p_j)}{N}}=\epsilon$$
implies that
$$k=\epsilon\sqrt{\frac{N}{p_j(1-p_j)}}$$
and
\begin{equation*}
\mathbb{P}\left(\left\{|z_j-p_j|>\epsilon\right\}\right)<\frac{p_j(1-p_j)}{N\epsilon^2}\leq \frac{1}{4\epsilon^2N}.
\end{equation*}
}
Therefore
$$\mathbb{P}(\{\|\rho_{T,h,N}-\rho_{T,h}\|_{L^\infty(\mathbb{R})} \leq \epsilon\})\geq 1- \frac{M_h}{4N\epsilon^2}.$$
{\color{black}
Since 
$$\int_\mathbb{R} |\rho_{T,h,N}(x)-\rho_{T,h}(x)|dx\leq \|\rho_{T,h,N}-\rho_{T,h}\|_{L^\infty(\mathbb{R})}|\mathrm{supp}(\rho_{T,h})|$$
we have that
$$\left\{\|\rho_{T,h,N}-\rho_{T,h}\|_{L^\infty(\mathbb{R})} \leq \epsilon\right\}\subset 	\left\{\|\rho_{T,h,N}-\rho_{T,h}\|_{L^1(\mathbb{R})}\leq \epsilon |\mathrm{supp}(\rho_{T,h})| \right\}.$$
Therefore 
$$\mathbb{P}\left(\left\{\|\rho_{T,h,N}-\rho_{T,h}\|_{L^\infty(\mathbb{R})} \leq \epsilon\right\} \right)\leq \mathbb{P}\left(\left\{\|\rho_{T,h,N}-\rho_{T,h}\|_{L^1(\mathbb{R})}\leq \epsilon |\mathrm{supp}(\rho_{T,h})| \right\}\right)$$
and, finally,
$$1- \frac{M_h}{4N\epsilon^2}\leq \mathbb{P}\left(\left\{\|\rho_{T,h,N}-\rho_{T,h}\|_{L^1(\mathbb{R})}\leq \epsilon |\mathrm{supp}(\rho_{T,h})| \right\}\right).$$
Absorbing $|\mathrm{supp}(\rho_{T,h})|$ into $\epsilon$, one obtains
$$ 1- \frac{2|\mathrm{supp}(\rho_{T})|^3}{Nh\epsilon^2}\leq 1- \frac{M_h|\mathrm{supp}(\rho_{T,h})|^2}{4N\epsilon^2}\leq \mathbb{P}\left(\left\{\|\rho_{T,h,N}-\rho_{T,h}\|_{L^1(\mathbb{R})}\leq \epsilon  \right\}\right)$$
for $h<|\mathrm{supp}(\rho_T)|$.

Since we aim to assure the result with probability at least $1-\tau$,  let $\tau= 2|\mathrm{supp}(\rho_{T})|^3/ Nh\epsilon^2$. Then the error  becomes
$$\epsilon=\sqrt{\frac{2|\mathrm{supp}(\rho_T)|^3}{Nh\tau}}.$$
}

\end{proof}

{\it Step 2. Approximate control.}

 The controls built in Theorem \ref{TH:aprox} for the problem
\begin{equation*}
	\begin{cases}
		\partial_t \rho+\partial_x\left(V_{h,N}(x,t)\rho\right)=0\\
		\rho(0)=\rho_{0},\quad \rho(T)=\rho_{T,h,N},
	\end{cases}
\end{equation*}
with $V_{h,N}(x,t)=w_{h,N}(t)\sigma(a_{h,N}(t)x+b_{h,N}(t)),$
assure \eqref{NF2}.

{\it Step 3. Optimal choice of $h>0$.}
To get \eqref{NF} out of \eqref{NF2} it suffices to chose optimally $h>0$ so that the last terms in \eqref{NF2} coincide.

This is done by taking
$$
h= \left [\frac{1}{L\sqrt{d}} \left (\frac{2|\mathrm{supp}(\rho_T)|}{N\tau}\right )^{1/2}\right ]^{2/(2+d)}
$$
which leads to the estimate \eqref{NF}  with the constant in \eqref{fatconstant}. 

\end{proof}

\section{Further remarks and conclusions}\label{S:Conclusion}

\subsection{Exact controllability/coupling in $1-d$}
We have proved the property of approximate control. But it is natural to look for a class of density functions that can be exactly controlled. This can be achieved in $1-d$through an approximation argument, under suitable further assumptions.
Thus, we  restrict ourselves to the $1-d$ case and to the following class of densities:\begin{definition}
We denote by $\mathcal{C}$ the set of probability densities $\rho$ satisfying:
		\begin{itemize}
		\item 		$\mathrm{supp}(\rho)=[0,1]$
		\item 		$\rho\in C^3([0,1])$
		\item 		$\sup_{x\in (0,1)}|\rho'(x)|+|\rho''(x)|\leq L$
		\item 		$\min_{x\in[0,1]}\rho(x)\geq c>0$.
		\end{itemize}
\end{definition}
The following holds:
\begin{theorem}
For any $\rho_T\in \mathcal{C}$, there exist controls $w,b\in BV(0,1)$ such that 		\begin{equation*}
		\begin{cases}
		\partial_t \rho+\partial_x(w(t)\sigma(x+b(t)))=0\quad (x,t)\in \mathbb{R}\times (0,1)\\
		\rho(0)=\mathbb{1}_{(0,1)},\quad \rho(1)=\rho_T
		\end{cases}
		\end{equation*}
		and the BV norm of the controls is bounded by
		\begin{equation}\label{BVestimate}\|w\|_{BV(0,1)}\leq 3|\log(\rho_T(0))|+4|\partial_x\log(\rho_T(0))|+ \left\| \partial_x\log(\rho_T)\right\|_{BV(0,1)},\quad \|b\|_{BV(0,1)}\leq 2
		\end{equation}
		where $\rho_T(0)=\lim_{x\to0^+}\rho_T(x)$.
\end{theorem}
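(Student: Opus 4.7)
The plan is to obtain the exact controllability as a limit of approximate controls supplied by the 1-d machinery of Theorem \ref{TH:aprox}, using the $C^3$ regularity and strict positivity of $\rho_T\in\mathcal{C}$ to extract \emph{uniform} BV bounds on the approximating controls. The three moving parts are: a careful choice of approximations of $\rho_T$, a sharp BV bookkeeping of the inductive construction applied to these approximations, and a compactness/stability argument to pass to the limit.

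For each $K\in\mathbb{N}$ I would approximate $\rho_T$ by the piecewise constant function $\rho_T^K$ on the uniform partition of $[0,1]$ into $K$ cells, with value $\bar\rho_k = K\int_{(k-1)/K}^{k/K}\rho_T$ on the $k$-th cell. The $C^3$ regularity gives $\|\rho_T^K-\rho_T\|_{L^1}=O(1/K)$ and, more importantly, the discrete sum $\sum_k|\log(\bar\rho_k/\bar\rho_{k-1})|$ converges to $\int_0^1|(\log\rho_T)'(x)|\,dx$, which is controlled by $\|\partial_x\log\rho_T\|_{BV(0,1)}$. Applying the inductive scheme of Proposition \ref{proptoquote} together with a piecewise constant initial configuration $\rho_0^K$ matched cell-by-cell in mass to $\rho_T^K$ and $L^1$-close to $\mathbb{1}_{(0,1)}$, I obtain piecewise constant controls $w_K,b_K$ that exactly transport $\rho_0^K$ into $\rho_T^K$ at time $1$. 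A natural choice is to have $b_K$ perform a sweeping role, taking values in $[-1,0]$ and advancing monotonically from one cell to the next, so that at each stage the density on the ``already built'' part of the target is frozen by the vanishing of the ReLU to the left of the pivot.

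The core technical step is the uniform estimate \eqref{BVestimate} on $(w_K,b_K)$. Each compression/dilation step (Lemma \ref{Ldilcomp}) aligning the $k$-th cell with the $(k-1)$-th contributes $|\log(\bar\rho_k/\bar\rho_{k-1})|$ to the total variation of $w_K$; summing across the $K$ cells produces a telescoping-type quantity whose limit is bounded by $\|\partial_x\log\rho_T\|_{BV(0,1)}$ thanks to the smoothness of $\rho_T$. Boundary corrections needed to bring the initial uniform density $\mathbb{1}_{(0,1)}$ into the correct starting profile $\rho_T(0)\,\mathbb{1}_{(0,1/\rho_T(0))}$ and to align the first-cell slope produce the terms $|\log\rho_T(0)|$ and $|\partial_x\log\rho_T(0)|$ with small integer multiplicities. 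The sweeping $b_K$ undergoes essentially a single change of monotonicity and stays in $[-1,0]$, giving $\|b_K\|_{BV(0,1)}\leq 2$.

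Once the uniform BV bounds are in hand, BV compactness produces a subsequential $L^1$-limit $(w,b)\in BV(0,1)$ satisfying \eqref{BVestimate} by lower semicontinuity. Since the vector fields $V_K(x,t)=w_K(t)\sigma(x+b_K(t))$ are uniformly Lipschitz in $x$ and converge to $V(x,t)=w(t)\sigma(x+b(t))$ in $L^1_{t,x}$ on compact sets, the DiPerna--Lions stability theory gives $L^1$-convergence of the associated solutions. Combining this with Lemma \ref{L1stab} to bridge between the initial data $\rho_0^K$ and $\mathbb{1}_{(0,1)}$, and with $\rho_T^K\to\rho_T$ in $L^1$, identifies $\rho(1)=\rho_T$ exactly, yielding the claim. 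The principal obstacle is the sharp BV accounting in step three: organizing the inductive construction so that the sequence of cell amplitudes $-\log\bar\rho_k$ discretely tracks the primitive $-\log\rho_T$ along the target support, ensuring that the variations assemble into a single clean copy of $\|\partial_x\log\rho_T\|_{BV(0,1)}$ together with the stated boundary multiplicities, rather than being amplified by factors depending on $K$.
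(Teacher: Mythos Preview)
Your high-level strategy --- build explicit piecewise-constant approximate controls, establish uniform $BV$ bounds, and pass to the limit by compactness and stability --- matches the paper exactly. The limiting argument you sketch (BV compactness plus DiPerna--Lions stability, together with Lemma~\ref{L1stab}) is also what the paper has in mind.

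Where your proposal goes wrong is in the construction of the approximate controls. You invoke the inductive scheme of Proposition~\ref{proptoquote}, but that scheme is incompatible with uniform $BV$ bounds here: it requires the supports of $\rho_0^h$ and $\rho_T^h$ to be \emph{separated} (cf.\ \eqref{sepsup}), it needs the initial datum to carry $K$ gaps so that the number of connected components matches, and each inductive step performs several translations (Lemma~\ref{1dtrans}) whose amplitude behaves like $\log(\kappa/\text{gap})$. Since those inter-cell gaps must vanish as $K\to\infty$ for $\rho_0^K$ to approach $\mathbb{1}_{(0,1)}$, the translation controls blow up; and even if each step were bounded, there are $\sim 5K$ of them with no telescoping, so the total variation of $w_K$ is at best $O(K)$. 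Your informal description of a ``sweeping $b_K$ freezing the already-built part'' is the \emph{right} picture, but it is not what Proposition~\ref{proptoquote} produces.

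The paper bypasses this entirely: it never uses Proposition~\ref{proptoquote} or any translation. It starts from $M_h\mathbb{1}_{(0,1)}$ (one connected component, no gaps) and applies \emph{only} Lemma~\ref{Ldilcomp} sequentially, with pivot $b$ stepping monotonically through the cell boundaries $0,h,2h,\dots,(n-1)h$. Each step freezes the part to the left and dilates/compresses the remaining mass to the next cell value, creating the jumps of $\rho_T^h$ one at a time. This gives closed-form controls:
\[
w_h(t)=2\log\!\Big(\tfrac{M_h}{\rho_T(x_1)}\Big)\mathbb{1}_{(0,\frac12+\frac1{2n})}+2n\sum_{i=2}^n\log\!\Big(\tfrac{\rho_T(x_{i-1})}{\rho_T(x_i)}\Big)\mathbb{1}_{(\frac12+\frac{i-1}{2n},\frac12+\frac{i}{2n})},
\]
with $b_h$ monotone and bounded by $1$. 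One more correction to your accounting: the quantity $|\log(\bar\rho_k/\bar\rho_{k-1})|$ is the \emph{value} of $w_h$ on each piece (hence it controls $\|w_h\|_{L^1}$, converging to $\int_0^1|\partial_x\log\rho_T|$); the \emph{total variation} of $w_h$ is the sum of jumps between consecutive pieces, i.e.\ discrete second differences of $\log\rho_T$, which converge to $\int_0^1|\partial_{xx}\log\rho_T|$. This is where the $C^3$ hypothesis is actually used, and is what produces the $\|\partial_x\log\rho_T\|_{BV}$ term in \eqref{BVestimate}.
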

Note that
 the time-reversibility of the flow can be used to control from any initial data $\rho_0\in \mathcal{C}$ to the uniform distribution $\mathbb{1}_{(0,1)}$ to later control it to $\rho_T\in \mathcal{C}$. However, the consequent estimate \eqref{BVestimate}, obtained by adding the cost of controlling to $\mathbb{1}_{(0,1)}$ and then the cost from $\mathbb{1}_{(0,1)}$ to the target,  would be far from optimal. It can be substantially improved if one directly controls from $\rho_0$ to $\rho_T$ without passing through an intermediate target such as the uniform distribution.

This result is genuinely $1-d$ and works upon having uniformly (with respect to the distance to the target) bounded approximate controls. In several dimensions the situation is richer and more intricate and left for future investigation.

\begin{proof}

Consider the following finite difference approximation
$$\rho_T^h(x)=\sum_{i=1}^n \rho_T^h(x_i)\mathbb{1}_{\left(\frac{i-1}{n},\frac{i}{n}\right)}(x)$$	
		where
		$$h=\frac{1}{n},\quad x_i=\frac{(2i-1)h}{2}\quad i=1,...,n.$$	
Since $\rho_T\in \mathcal{C}$, it is easy to derive the following estimate	
		$$\|\rho_T-\rho_T^h\|_{L^1(\mathbb{R})}\leq Lh.$$
The integral of $\rho_T^h$ will not be necessarily equal to one. Therefore it makes sense to define
		$$M_h:=\int_\mathbb{R} \rho_T^h(x)dx$$
		that tends to $1$ when $h\to 0$.
		
		The goal then is to find controls $w_h,b_h\in BV(0,1)$ such that
		\begin{equation*}
		\begin{cases}
		\partial_t \rho+\partial_x(w_h(t)\sigma(x+b_h(t))=0\quad (x,t)\in \mathbb{R}\times (0,1)\\
		\rho(0)=\rho_0^h=M_h\mathbb{1}_{(0,1)},\quad \rho(1)=\rho_T^h
		\end{cases}
		\end{equation*}
		getting uniform $BV$-bounds, with respect to $h$. This will allow us to pass to the limit guaranteeing the exact reachability of the limit target as $h\to 0$.

 We will apply sequentially the Lemma  \ref{Ldilcomp} as follows.
 
 \textcolor{black}{  First, using Lemma \ref{Ldilcomp}, in a time horizon of length $1/2+1/(2n)$, as the two first pictures in Figure \ref{1dexact} show, we can realise the transformation
 $$M_h\mathbb{1}_{(0,1)}\longrightarrow \rho_T\left(x_1\right)\mathbb{1}_{(0,M_h/\rho_T(x_1))}.$$}
 \textcolor{black}{Then we will split the time interval $(1/2+1/(2n),1)$ in $n-1$ equal subintervals of $1/(2n)$ time units and we will proceed as follows:}
 $$\rho_T\left(x_1\right)\mathbb{1}_{(0,M_h/\rho_T(x_1))}\longrightarrow \rho_T\left(x_1\right)\mathbb{1}_{(0,h)}+\rho_T\left(x_2\right)\mathbb{1}_{\left(h,h+\frac{M_h-h\rho_T(x_1)}{\rho_T(x_2)}\right)}$$
 and so on until we reach
 $$\sum_{i=1}^{n-2}\rho_T(x_i)\mathbb{1}_{((i-1)h,ih)}+\rho_T(x_{n-1})\mathbb{1}_{\left((n-2)h,(n-2)h+\frac{M_h-h\sum_{i=1}^{n-2}\rho_T(x_i)}{\rho_T(x_{n-1})}\right)}\longrightarrow \rho_T^h$$
 as Figure \ref{1dexact} shows.  Lemma \ref{Ldilcomp}  shows that the control assuring these sequence of transformations is precisely
	\begin{equation*}
			w_h(t)=2\log\left(\frac{M_h}{\rho_T(x_1)}\right)\mathbb{1}_{(0,\frac{1}{2}+\frac{1}{2n})}(t)+2n\sum_{i=2}^{n} \log\left(\frac{\rho_T(x_{i-1})}{\rho_T(x_i)}\right)\mathbb{1}_{(\frac{1}{2}+\frac{i-1}{2n},\frac{1}{2}+\frac{i}{2n})}(t)
		\end{equation*} 
		 and
			\begin{equation*}
			b_h(t)=\sum_{i=2}^n (i-1)h\mathbb{1}_{(\frac{1}{2}+\frac{i-1}{2n},\frac{1}{2}+\frac{i}{2n})}(t).
		\end{equation*} 
The reason why the first component of the sum  in $w_h$ has a bigger time-support is because $M_h$ does not need to be close to $\rho_T(x_1)$, and therefore we do not want to make the control arbitrarily fast. However, the logarithm in the other terms of the sum, thanks to the smoothness of the target, is close to $0$ and these transformations can be made faster (the speed $2n$ is compensated with the fact that $\log\left(\rho_T(x_{i-1})/\rho_T(x_i)\right)$ is small).

 Let us proceed to compute the $BV$-norms.

		\begin{figure}
		\centering
		\includegraphics[scale=0.2]{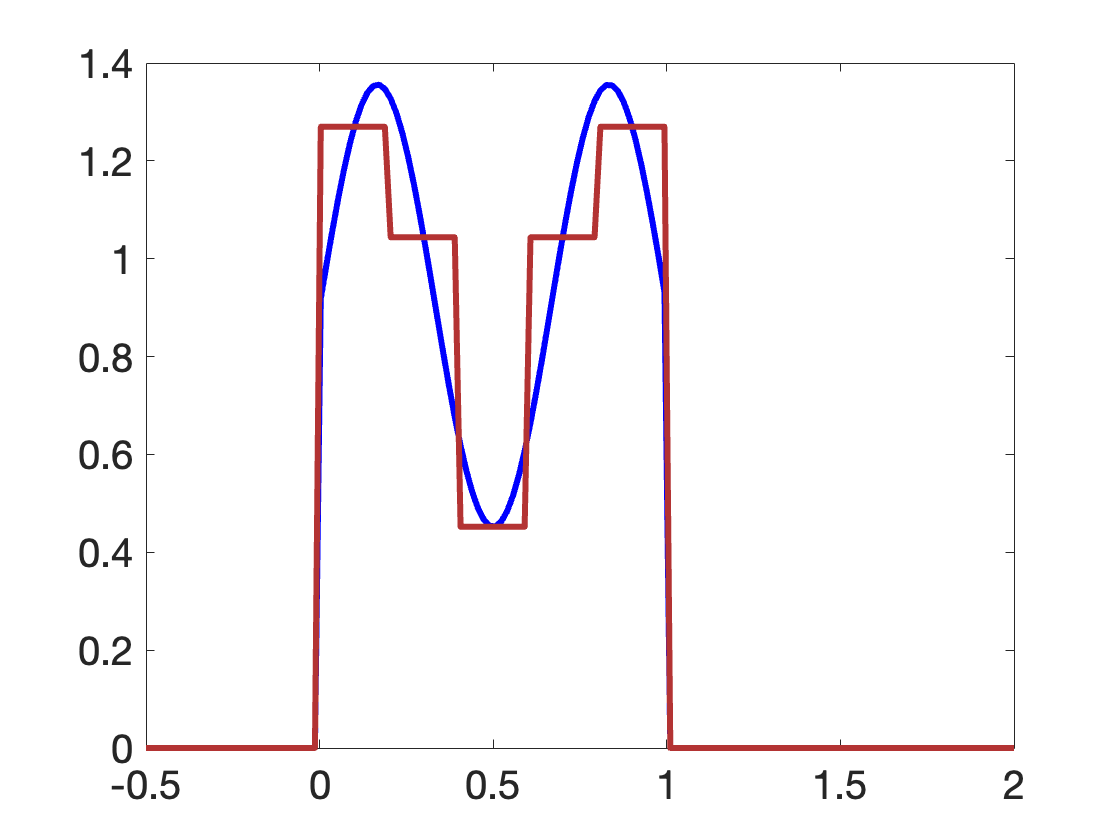}
		\caption{In blue the target function and in red the finite difference approximation of the target function}
		\end{figure}
		
		\begin{figure}
		\centering
		\includegraphics[scale=0.12]{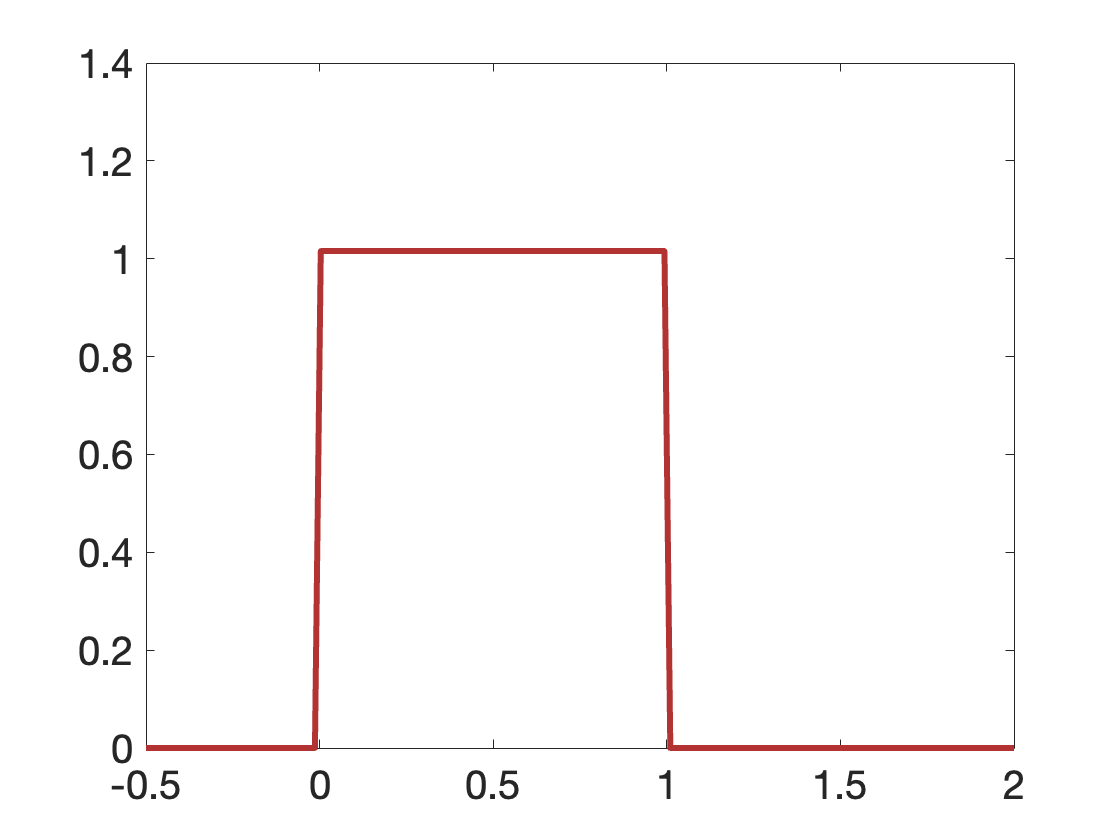}
		\includegraphics[scale=0.12]{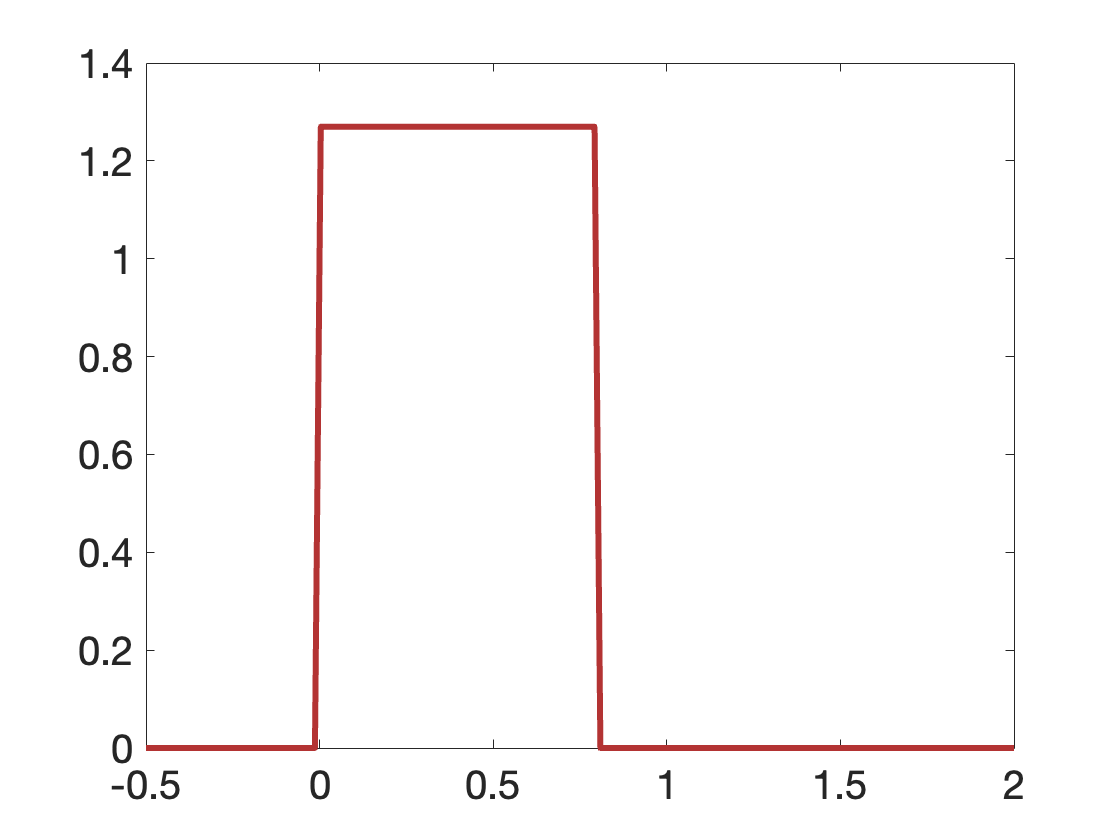}
		\includegraphics[scale=0.12]{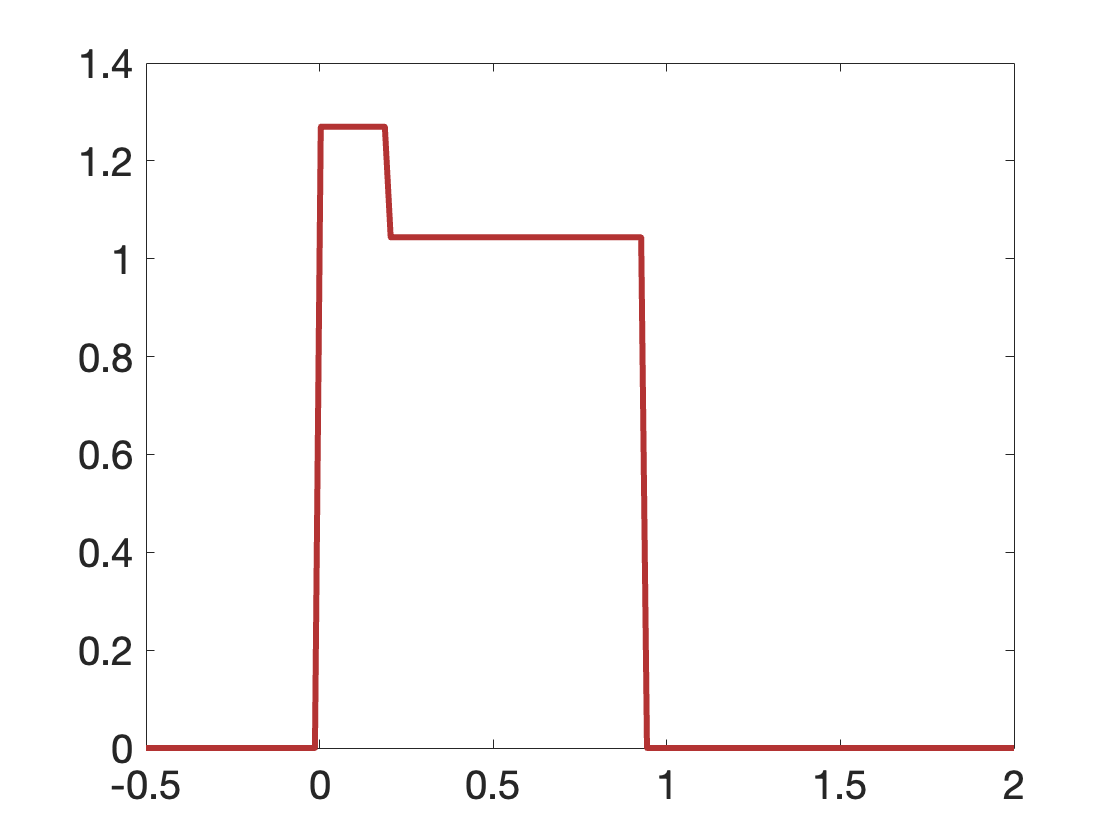}
		\includegraphics[scale=0.12]{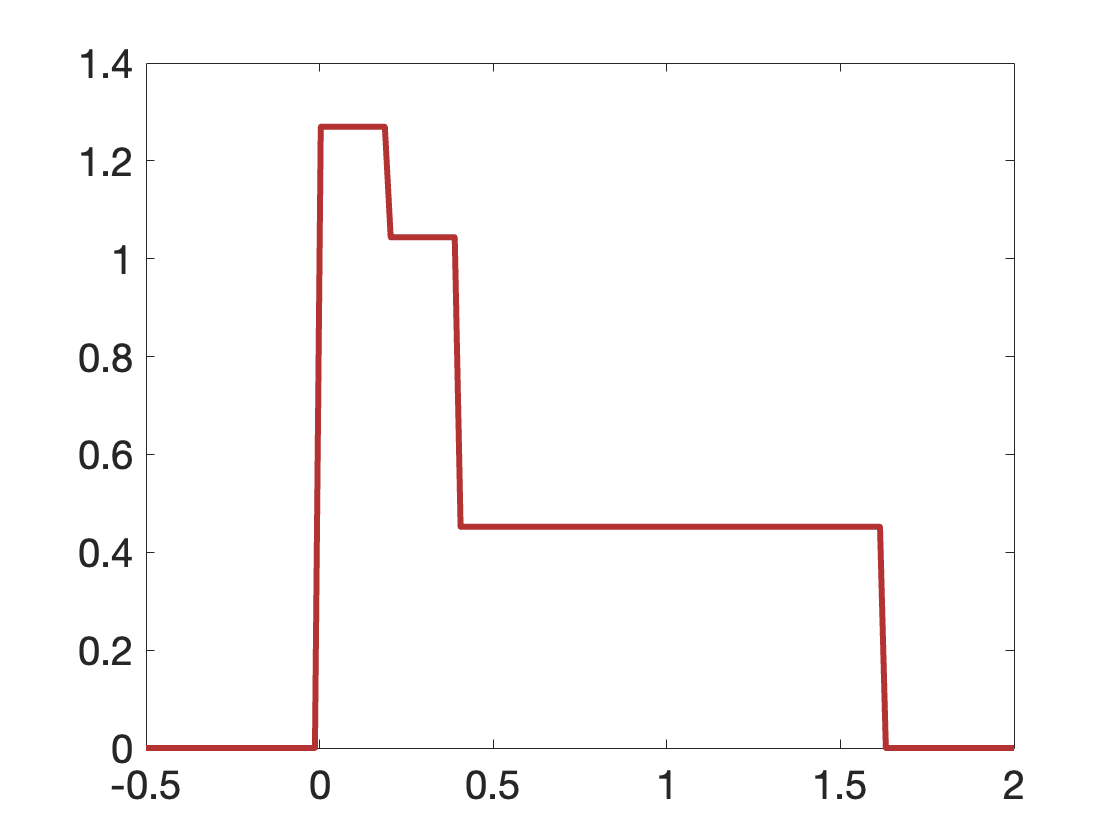}
		\includegraphics[scale=0.12]{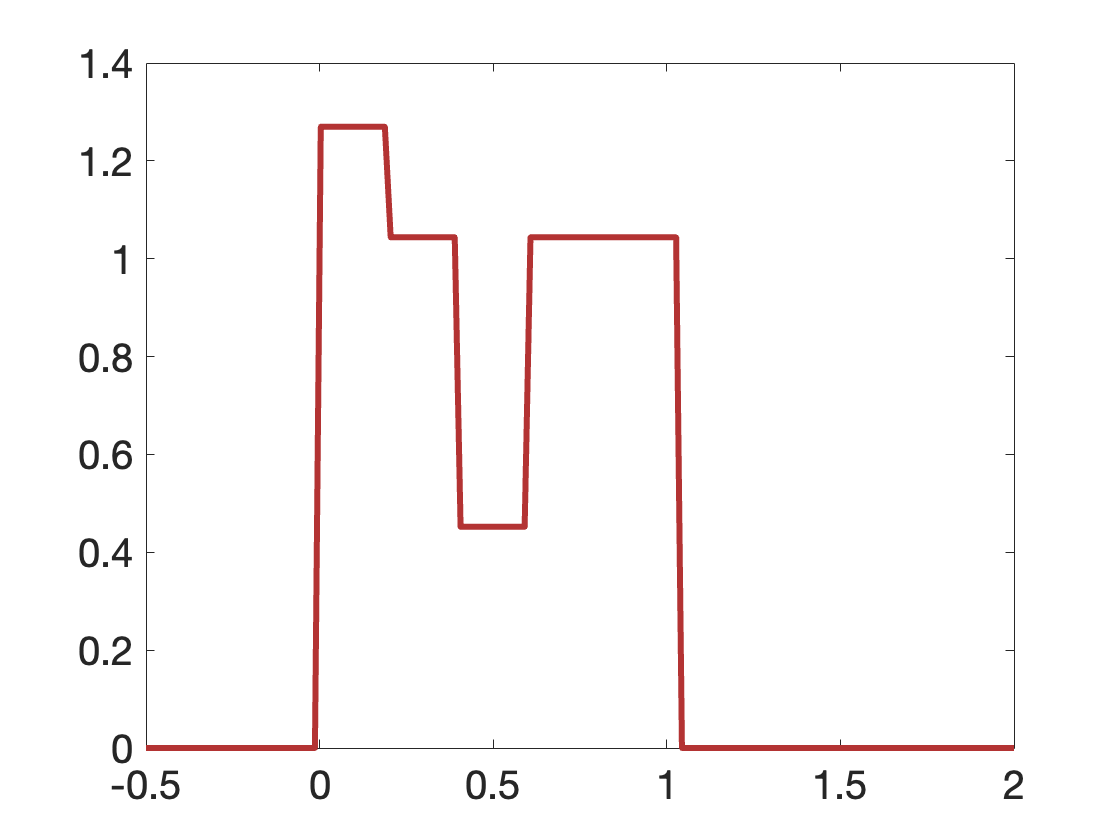}
		\includegraphics[scale=0.12]{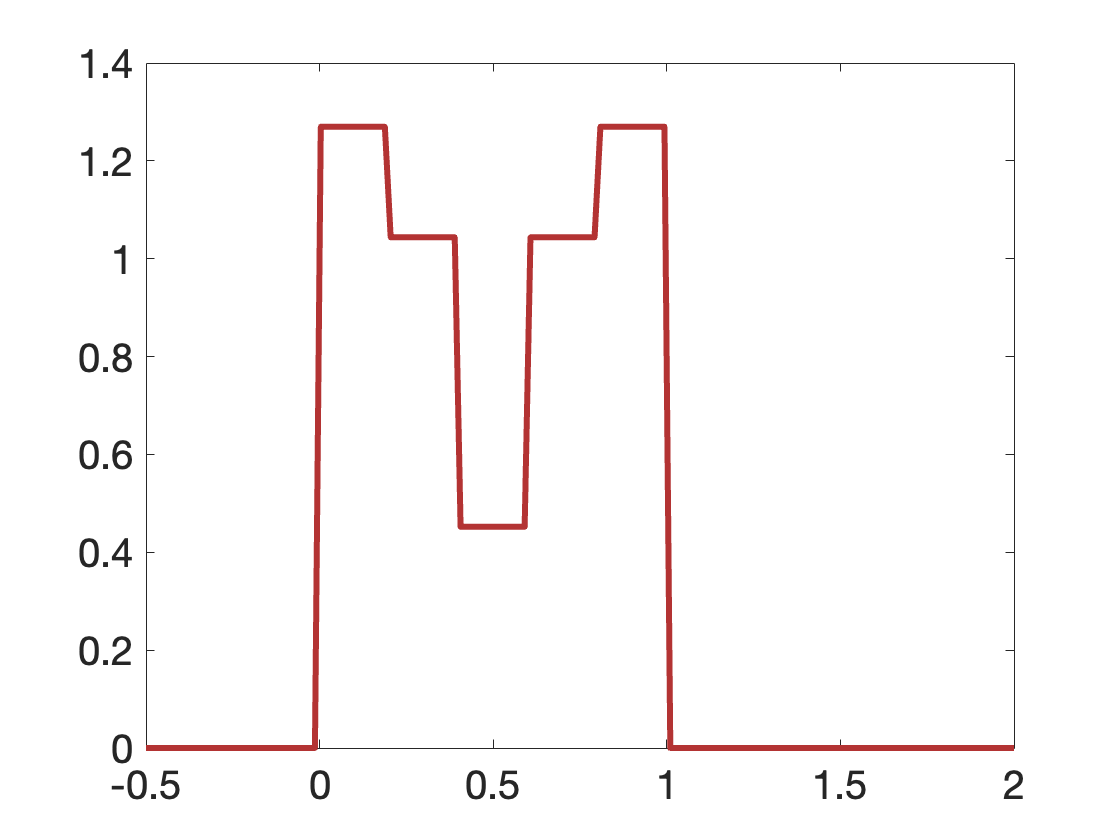}
\caption{Associated states at times $t=0,1/2,7/10, 8/10,9/10,1$, representing the transformation from a uniform distribution to the finite-difference approximation target with controls that are uniformly bounded in $BV$ with respect to the target approximation.}\label{1dexact}
		\end{figure}

	\begin{itemize}
	\item \textbf{The control $b_h$.} $b_h$ is piecewise constant monotonically increasing from $0$ to $1$. Hence the control $b_h$ is uniformly bounded (with respect to $h$) in $BV$. Therefore the $TV$-seminorm is equal to $1$ whilst  the $L^1$-norm can be bounded by 1.
	\item \textbf{The control $w_h$.}
		The $L^1$-norm of $w_h$ is given by
		\begin{align*}		
		\|w_h\|_{L^1}&=2\left(\frac{1}{2}+\frac{1}{2n}\right)\left|\log\left(\frac{M_h}{\rho_T(x_1)}\right)\right|+2n\sum_{i=2}^{n} \left|\log\left(\frac{\rho_T(x_{i-1})}{\rho_T(x_i)}\right)\right|\frac{1}{2n}\\
			&=2\left(\frac{1}{2}+\frac{1}{2n}\right)\left|\log\left(\frac{M_h}{\rho_T(x_1)}\right)\right|+\sum_{i=1}^n h|\partial_x\log(\rho_T(x_i))|+\mathcal{O}(h^2)\\
&=\left|\log\left(\frac{M_h}{\rho_T(x_1)}\right)\right|+\int_0^1 |\partial_{x}\log(\rho_T(x))|dx+\mathcal{O}(h)
		\end{align*}
since $\rho_T\in \mathcal{C}$ and hence $\partial_x\log\rho_T(x)$ is Lipschitz because
$$|\partial_{xx}\log(\rho(x))|=\left|\frac{\partial_x\rho \rho-(\partial_x\rho)^2}{\rho^2}\right|$$ is bounded.

		The $TV$-seminorm of $w_h$ is equal to
		\begin{align*}
T			V(w_h)&= \left|2\log\left(\frac{M_h}{\rho_T(x_i)}\right)-2n\log\left(\frac{\rho_T(x_1)}{\rho_T(x_2)}\right)\right|+2n\sum_{i=1}^n \left|\log\left(\frac{\rho_T(x_{i-1})}{\rho_T(x_i)}\right)-\log\left(\frac{\rho_T(x_{i})}{\rho_T(x_{i+1})}\right)\right|\\
			&\leq 2\left|\log\left(\frac{M_h}{\rho_T(x_i)}\right)\right|+\frac{4}{2h}|\log(\rho_T(x_1))-\log(\rho_T(x_2))|+\\
			&\qquad+n\sum_{i=1}^n|2\log(\rho_T(x_i))-\log(\rho_T(x_{i+1}))-\log(\rho_T(x_{i-1}))|\\
			&=2|\log(M_h/\rho_T(x_1))|+4|\partial_x\log(\rho_T(x_1))|+n\sum_{i=1}^n h^2|\partial_{xx}\log(\rho_T(x_i))+\mathcal{O}(h^2))|\\
			&=2|\log(\rho_T(0))|+4|\partial_x\log(\rho_T(0))|+\int_0^1 |\partial_{xx}\log(\rho_T(x))|dx+\mathcal{O}(h)
		\end{align*}
where in the last equality we used the boundedness of the third derivative of $\rho_T$ and that $\rho_T\geq c$ in its support to bound the Lipschitz constant of $\partial_{xx}\log(\rho_T(x))$.
	\end{itemize}
Therefore,  the limit controls 
\begin{equation*}
\begin{cases}
\partial_t\rho+\partial_x\big(w(t)\sigma(x+b(t))\rho\big)=0\\
\rho(0)=\mathbb{1}_{(0,1)},\quad \rho(T)=\rho_T
\end{cases}
\end{equation*}
 would satisfy
$$\|w\|_{BV}\leq 3|\log(\rho_T(0))|+4|\partial_x\log(\rho_T(0))|+\left\| \partial_x\log(\rho_T)\right\|_{BV},\quad \|b\|_{BV}\leq 2.$$ 

\end{proof}

\subsection{Relation with other couplings.}




The $1-d$ control result above resembles the classical increasing rearrangement to couple two $1-d$  probability densities, \cite[Chapter 1]{villani2009optimal}. The Kn{\"o}the-Rosenblatt rearrangement \cite[Chapter 1]{villani2009optimal} constitutes a variant that, essentially, consists on extending the increasing rearrangement to several dimensions by first rearranging the mass in the marginal in one variable and then parametrically rearranging the others.
The core idea of the proof of Theorem \ref{TH:aprox} is  similar to the Kn{\"o}the-Rosenblatt rearrangement but imposing a neural dynamics. The result is approximate in nature and  the exact controllability/coupling cannot be achieved with such strategy.
Inspired by the $1-d$ example and the Kn{\"o}the-Rosenblatt rearrangement, a natural question arising is whether there exists a simple vector field obeying a neural network-like ansatz that can induce such rearrangement. But probably, this requires activation functions of a different nature, for instance activating only a quadrant. This is a subject that requires substantial further research.

\subsection{Conclusion.}
In this article we have analysed and proved the approximation control of neural  transport equations in $L^1$. In $1-d$, we have also shown that the flow can be controlled exactly under suitable added assumptions on the initial density and the target. Our study and results are motivated by  normalising Flows and can be interpreted in that context. The controls we build are piecewise constant in time and Lipschitz in space with a finite number of jumps.
 Our methods are inductive and constructive and lead to explicit estimates on the complexity of the needed controls and, in particular, on the number of jumps.

 By time-discretization our results can be interpreted in the context of ResNets. The number of time-discontinuities  in the control has a direct interpretation on the number of layers that the discretized ResNet needs to achieve the goal of pairing the two probability densities approximately in $L^1$. It is therefore of great practical interest.

The $L^1$-approximate controllability result has been combined with an adaptation of a quantified version of the law of large numbers to determine the  number of random samples needed to assure the approximate coupling in high probability.


Some extra remarks are in order:
\begin{enumerate}
\item The controls developed in \cite{ruiz2021neural} essentially work for any Lipschitz activation function that is positive in $\mathbb{R}^+$ and vanishes in $\mathbb{R}^-\cup \{0\}$. This is not the situation here were we strongly used the linear structure on $\mathbb{R}^+$ of the ReLU activation function. However, by employing higher control norms on $a$ and $b$, one could develop similar arguments to obtain the same result using a linearization of $\sigma$ at the origin, i.e. considering $\tilde\sigma(s)=0$ if $s\leq 0$ and $\tilde{\sigma}(s)=(\lim_{h->0^+}\sigma(h)/h)s$ if $s>0$.
\item Extending the $1-d$ result on the exact coupling under suitable assumptions on the target probability density to the multi-dimensional case is a challenging open problem. It is unclear whether the simple ansatz adopted in this paper will suffice for that purpose or more general and/or  complex ones will be required.

\item 
Our estimates and constructions face the curse of dimensionality. This is due to the use of the cartersian meshes in our approximation arguments. Adding more richness to the vector field could improve the complexity estimates. But this open conjecture requires substantial further analysis.

\item {\color{black} There is a fundamental difference between the approximate controllability property in the Wasserstein-1 distance ($W_1$)  in \cite{ruiz2021neural} and in the $L^1$ distance presented here (see Figure \ref{Fig:W1L1}).  On one hand we exhibit a Wasserstein-1 approximation  of a characteristic function, so that its  approximation  values are very different but the mass is allocated in a similar place. On the other, in the $L^1$ approximation,  the values of the function are similar but there is a small fraction of the mass \textit{far away} of the support of the characteristic function.

\begin{figure}
\centering
\includegraphics[scale=0.125]{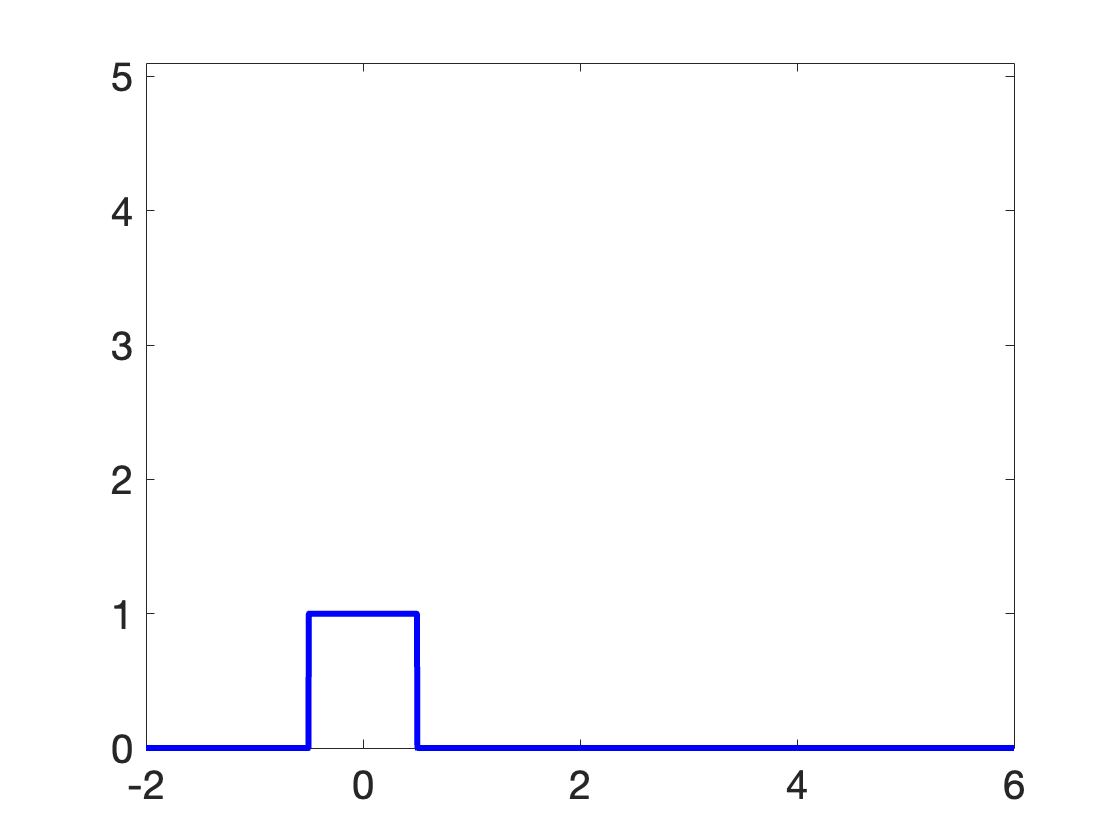}
\includegraphics[scale=0.125]{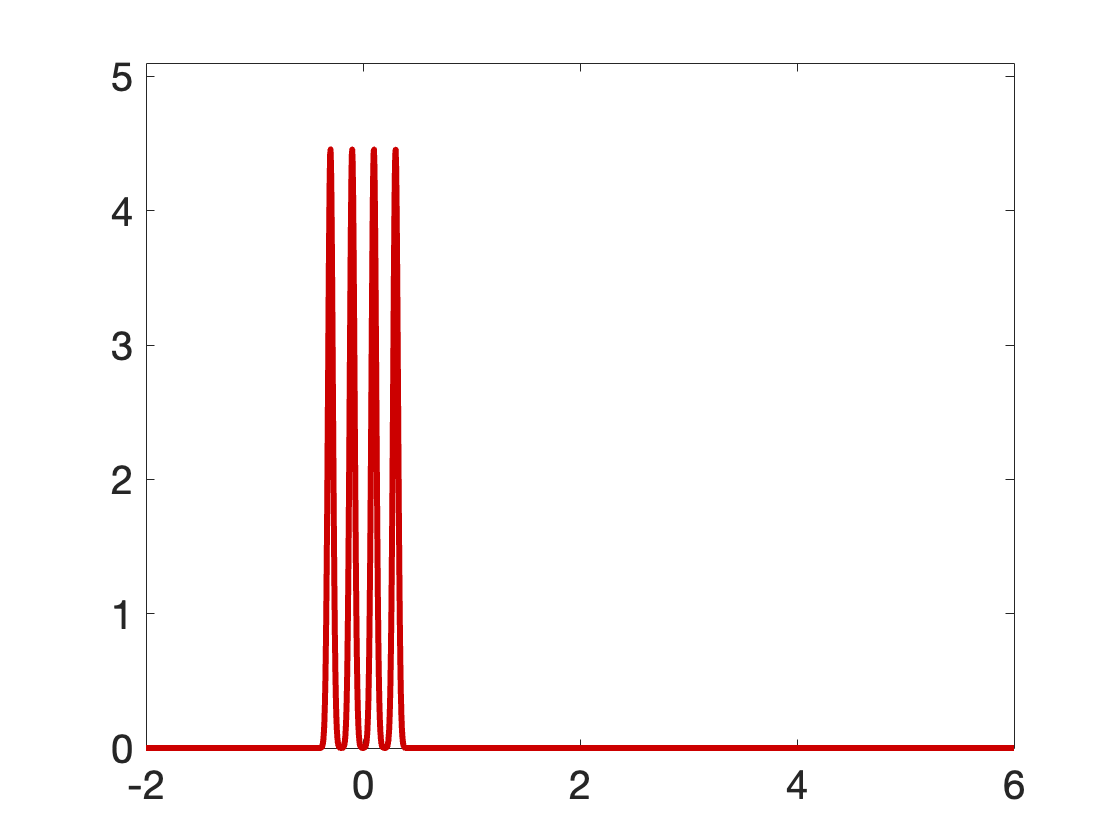}
\includegraphics[scale=0.125]{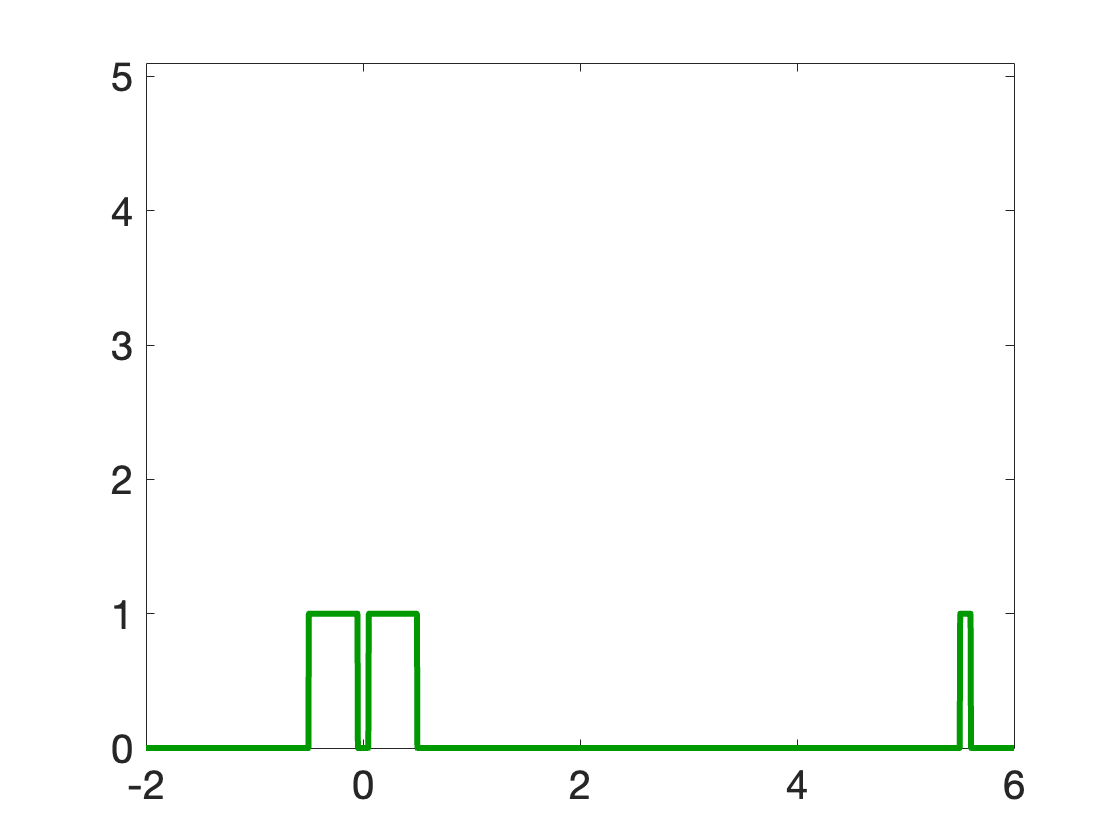}
\caption{Left: Characteristic function of $[-1/2,1/2]$. Center: a Wasserstein-1 approximation of the characteristic function of $[-1/2,1/2]$. Right: an $L^1$ approximation of the characteristic function of $[-1/2,1/2]$.}\label{Fig:W1L1}
\end{figure}

From the controllability perspective, the approximate control in Wasserstein-1 is simpler to achieve, since we only care on allocating the mass approximately where the target has mass. Therefore we do not care on the specific value that the function takes. This simplifies the arguments but on the other side it makes difficult any type of reversibility argument. In the context of $L^1$ approximate control, since the transport equation generates a contraction dynamics, it suffices to consider an approximate initial data. This allows, in particular, to use the reversibility of the continuity equation. However, when working with the Wasserstein-1 distance, we cannot use the same type of arguments: if one considers $\rho_0$ and $\eta_0$ be probability densities and $\rho(t)$ and $\eta(t)$ be the solutions of the continuity equation by the same Lipschitz vector field, due to the Gr\"onwall inequality we have, 
$$W_1(\rho(t),\eta(t))\leq e^{Lt}W_1(\rho_0,\eta_0)$$
where $L$ is the Lipschitz constant of the vector field, whereas in $L^1$ one has
$$\|\rho(t)-\eta(t)\|_{L^1(\mathbb{R}^d)}\leq \|\rho_0-\eta_0\|_{L^1(\mathbb{R}^d)}.$$
Therefore, 
the scheme of proof illustrated in Figure \ref{schemee} cannot directly applied for the Wasserstein distance. 
\item Note that the strategy presented does not use the universal approximation property of the flow map. We proved the controllability of the continuity equation in $1-d$, a situation in which the flow map can never enjoy the universal approximation property.
}
\end{enumerate}

\bibliographystyle{abbrv}
\bibliography{L1biblio}

\end{document}